\newtheorem{thm}{Theorem}[section]
\newtheorem{lemma}[thm]{Lemma}
\newtheorem{prop}[thm]{Proposition}
\newenvironment{demo}{\noindent{\it Proof.}\,}{\begin{flushright}
  \,$\Box$ \smallskip \end{flushright}}
\theoremstyle{definition}
\newtheorem{defi}[thm]{Definition}
\newtheorem{rmk}[thm]{Remark}
\DeclareMathOperator{\R}{\mathbf R}
\DeclareMathOperator{\N}{\mathbf N}
\DeclareMathOperator{\PP}{\mathbf P}
\DeclareMathOperator{\id}{id}
\DeclareMathOperator{\codim}{codim}
\DeclareMathOperator{\Int}{Int}
\DeclareMathOperator{\Sing}{Sing}
\DeclareMathOperator{\Ima}{Im}
\DeclareMathOperator{\const}{const}
\DeclareMathOperator{\Reg}{Reg}
\DeclareMathOperator{\Ker}{Ker}
\title{On almost Blow-analytic equivalence}
\author{Goulwen Fichou and Masahiro Shiota}
\thanks{The first author has been supported by the ANR project ANR-08-JCJC-0118-01.}
\address{IRMAR (UMR 6625), Universit\'e de Rennes 1, Campus de Beaulieu, 35042 Rennes Cedex, France and
Graduate School of Mathematics, Nagoya University, Chikusa, Nagoya, 
464-8602, Japan}
\subjclass[2000]{26E05, 34C08, 58K20}
\keywords{Real analytic functions, Nash functions, desingularization}
\begin{document}

\begin{abstract} We study the analytic equivalence of real analytic
 function germs after desingularization and state the cardinality of the classes
 under this equivalence relation. We consider also the Nash case, and
 compare these equivalences with the
 blow-analytic (respectively blow-Nash) equivalence. We prove an
 approximation result after desingularization: Nash function germs that are analytically equivalent after analytic desingularizations are Nash equivalent after Nash desingularizations.
\end{abstract}
\maketitle
%%%%%%%%%%%%%%%%%%%%%%%%%%%%%%%%%%%%%%%%%%%%%%%%%%%%%%%%%%%%%%%%%%%%%%%%%%%%%%%%%%%%%%%%%%%%%%%%%%%
In the study of real analytic function singularities, the choice of a relevant equivalence relation is a crucial but difficult topic. After Hironaka Desingularization Theorem \cite{Hi}, which enables to produce functions with only normal crossing singularities after a finite sequence of blowings-up along smooth analytic centers, it seems natural to expect that equivalent real analytic functions should admit similar resolutions of their singularities. In that spirit, we propose in this paper to study the equivalence relation obtained by requiring that two real analytic functions, defined on a compact real analytic manifold, are equivalent if there exist two Hironaka desingularizations such that the modified functions with only normal crossing singularities, obtained after the desingularizations, become analytically equivalent.

This definition is weaker than that of blow-analytic equivalence
introduced by T.~C.~Kuo \cite{Ku}, which seems to be, up to now, the
best candidate to be the real counterpart of the topological
equivalence for complex analytic functions (cf \cite{FP} for a recent
survey). Let $f,g:(\mathbb R^n,0)\longrightarrow (\mathbb R,0)$ be
analytic function germs. They are blow-analytically equivalent in the
sense of \cite{Ku} if
there exist real modifications $\beta_f:M_f \longrightarrow \mathbb R^n$
and $\beta_g:M_g \longrightarrow \mathbb R^n$ and an analytic
isomorphism $\Phi :(M_f, \beta_f^{-1}(0)) \longrightarrow (M_g,
\beta_g^{-1}(0))$ which induces a homeomorphism $\phi:(\mathbb R^n,0)
\longrightarrow  (\mathbb R^n,0)$ such that $f=g \circ \phi$.
In particular, the desingularizations are replaced by the
notion of real modifications (in order to obtain an equivalence
relation) and the analytic isomorphism between the modifications
induces a homeomorphism.

The equivalence relation we propose to study in this paper is called
almost Blow-analytic equivalence in the language of blow-analytic
theory (cf \cite{KW} and \cite{FKK} for the terminology). 
Almost Blow-analytic equivalence is really a different
relation: we exhibit in section \ref{sect2} two functions that are
almost Blow-analytically equivalent but not blow-analytically
equivalent. 
Note that we do not know in general if these relations are effectively
equivalence relations \cite{FKK} (in particular we use the generated equivalence
relations). However we give a proof in proposition \ref{eqrel} of the
known fact that this is
indeed the case if we allow non smooth blowings-up in the
definitions. We focus also on the questions of the cardinality of the
equivalence classes. We prove in particular that the
cardinality of the set of equivalence classes for almost Blow-analytic
equivalence is countable. Therefore it is reasonable to hope
for a classification! 
The proof of that result is based on the study
of the cardinality for the analytic equivalence classes of normal
crossing functions made in \cite{FS}, where we reduced the problem to
the analogous study for Nash functions, i.e. real analytic functions
whose graph is a semi-algebraic set (described by a set of polynomial
equalities and inequalities with real coefficient
polynomials). Actually, the study of almost Blow-Nash equivalence for
Nash functions presents a great interest by itself (cf. \cite{Ko,Fi}
for notions on blow-Nash equivalence). Notably the cardinality of the
set of equivalence classes remains countable even if the underlying Nash manifold is no longer compact.

We prove in theorem \ref{main} that almost Blow-analytically
equivalent Nash function germs are almost Blow-Nash equivalent. This
result can be view as a Nash Approximation Theorem
\cite{CRS} after blowings-up. Note
that this question remains open for blow-analytic equivalence.

In order to prove theorem \ref{main}, we focus on the Nash
approximation of a Hironaka desingularization of a Nash
function. Namely, let $f$ be a Nash function on a Nash manifold $M$
and $X$ a compact semialgebraic subset of $M$. Then
$f$ is in particular a real analytic function on a real analytic
manifold, and by Hironaka Desingularization Theorem \cite{Hi} there
exists a composition $\pi$ of blowings-up along smooth analytic
centers such that $f\circ \pi$ has only normal crossing
singularities on a neighborhood of $\pi^{-1}(X)$. As a main result of the paper, we prove in theorem \ref{Nresol} that each blowing-up
along a smooth analytic center can be approximated by a blowing-up
along a smooth Nash center in such a way that the normal crossing
property of the modified function continues to hold.
In order to approximate a sequence of blowings-up along smooth centers, we focus in section \ref{sect-eu} on a Euclidean realization of such a sequence to describe precisely its behavior under a perturbation of the defining ideal of the centers (cf. lemma \ref{pert}). Combined with N\'eron Desingularization \cite{Sp}, this implies theorem \ref{Nresol}. But this is not sufficient to prove theorem \ref{main} since we need to approximate also the analytic diffeomorphism of the equivalence after the desingularization. To this aim, we need to generalize the Nash Approximation Theorem in \cite{CRS} to a more general noncompact situation (cf. proposition \ref{prop-app}). We obtain as a corollary that analytically equivalent Nash function germs on a compact semialgebraic set in a Nash manifold are Nash equivalent (cf. theorem \ref{thmN}). The last section is devoted to the proof of theorem \ref{main}.

In this paper a manifold means a manifold without boundary, analytic manifolds and maps mean real analytic ones unless otherwise specified, and id stands for the identity map. 

%%%%%%%%%%%%%%%%%%%%%%%%%%%%%%%%%%%%%%%%%%%%%%%%%%%%%%%%%%%%%%%%%%%%%%%%%%%%%%%%%%%%%%%%%%%%%%%%%%%

%%%%%%%%%%%%%%%%%%%%%%%%%%%%%%%%%%%%%%%%%%%%%%%%%%%%%%%%%%%%%%%%%%%%%%%%%%%%%%%%%%%%%%%%%%%%%%%%%%%
%%%%%%%%%%%%%%%%%%%%%%%%%%%%%%%%%%%%%%%%%%%%%%%%%%%%%%%%%%%%%%%%%%%%%%%%%%%%%%%%%%%%%%%%%%%%%%%%%%%
%%%%%%%%%%%%%%%%%%%%%%%%%%%%%%%%%%%%%%%%%%%%%%%%%%%%%%%%%%%%%%%%%%%%%%%%%%%%%%%%%%%%%%%%%%%%%%%%%%%

\section{Almost Blow-analytic equivalence}\label{bae}

%%%%%%%%%%%%%%%%%%%%%%%%%%%%%%%%%%%%%%%%%%%%%%%%%%%%%%%%%%%%%%%%%%%%%%%%%%%%%%%%%%%%%%%%%%%%%%%%%%%

\subsection{Definition of almost Blow-analytic equivalence}
The classical right equivalence between analytic function germs says that real
analytic function germs $f,g:M\longrightarrow \R$ on an analytic
manifold $M$ are equivalent if there exists an analytic diffeomorphism
$h:M\longrightarrow M$ such that $f=g \circ h$. We are interested in
this paper in weaker notions of equivalence between function germs.

\begin{defi}\label{def1} Let $M$ be an analytic manifold and $f,g:M\longrightarrow \R$ analytic functions on $M$. 
Then $f$ and $g$ are said to be {\it almost Blow-analytically equivalent} if there exist two 
compositions of finite sequences of blowings-up along smooth analytic centers $\pi_f:N\longrightarrow 
M$ and $\pi_g:L\longrightarrow M$ and an analytic diffeomorphism $h:N\longrightarrow L$ so that 
$f\circ\pi_f=g\circ\pi_g\circ h$. 
In the case where there exist such compositions of finite sequences of
blowings-up along smooth analytic centers $\pi_f:N\to M$ and $\pi_g:L\to M$, and analytic 
diffeomorphisms $h:N\to L$ and $\tau:\R\to\R$ such that $\tau\circ f\circ\pi_f=g\circ\pi_g\circ h$, 
$f$ and $g$ are called {\it almost Blow-analytically R-L (=right-left) equivalent}. 

\end{defi}
In this paper, we will be interested also in {\it almost Blow-analytic
 (R-L) equivalence} for germs of analytic functions, whose definition
is similar to definition \ref{def1}.

Here and from now on, we impose some restrictions to the blowings-up,
that are natural thanks to Hironaka Desingularization Theorem. In
particular, we treat only the case where the images of the centers of the 
blowings-up of $\pi_f$ and $\pi_g$ are contained in their singular point sets $\Sing f$ and $\Sing g$ 
respectively, and the center $C$ of each blowing-up is of co-dimension
strictly bigger than one. We assume also that $C$ has only normal crossing with the 
union $D$ of the inverse images of the previous centers, i.e. there exists an analytic local coordinate 
system $(x_1,...,x_n)$ at each point of $C$ such that $C=\{x_1=\cdots=x_k\}$ and $D=\{x_{i_1}\cdots x_{i
_l}=0\}$ for some $1<k\in\N$ and $1\le i_1<\cdots<i_l\le n\in\N$, where $\N=\{0,1,...\}$.

We recall that a {\it semi-algebraic} set is a subset of a Euclidean space which is described by finitely many equalities 
and inequalities of polynomial functions. 
A {\it Nash manifold} is a $C^\omega$ submanifold of a Euclidean space which is semi-algebraic. 
A {\it Nash function} on a Nash manifold is a $C^\omega$ function with semi-algebraic graph. 
A {\it Nash subset} is the zero set of a Nash function on a Nash
manifold. The story of Nash manifolds and Nash maps begins with the
fundamental paper \cite{N} of J. Nash who realized any compact smooth
manifold as a union of some connected components of a real algebraic set. We refer to
\cite{Shiota} for an overview on Nash functions on a Nash subset.

\begin{defi}
Let $M$ be a Nash manifold, $X\subset M$ a semi-algebraic subset and $f,g$ Nash function germs on 
$X$ in $M$. 
Then $f$ and $g$ are said to be {\it almost Blow-Nash equivalent} if there exist open semi-algebraic 
neighborhoods $U$ and $V$ of $X$ in $M$, two compositions of finite sequences of blowings-up along smooth Nash 
centers $\pi_f:N \longrightarrow U$ and $\pi_g:L\longrightarrow V$ and a Nash diffeomorphism $h$ from an open semi-algebraic neighborhood of $\pi_f^{-1}(X)$ in $N$ to one of $\pi_g^{-1}(X)$ in $L$ so that $f$ and $g$ are supposed to be defined on $U$ and $V$, respectively, $f\circ\pi_f=g\circ\pi
_g\circ h$ and $h(\pi^{-1}_f(X))=\pi^{-1}_g(X)$. 
\end{defi}

We define {\it almost Blow-Nash R-L equivalence} similarly to
definition \ref{def1}.

\begin{rmk} We are interested in this paper in global versions of the
  classical blow-analytic equivalence defined by T.-C. Kuo, which
  considers only germs of functions at a point. In the
  analytic case, we will consider the local case and also the
  compact case since the
  desingularization theorem of H. Hironaka \cite{Hi} is valid in
  these situations. In the Nash category, we can deal
  with even the non-isolated situation by \cite{Hi,BM}.
\end{rmk}

The {\it Blow-analytic equivalence} is defined by requiring above $h$ to induce a homeomorphism of $M$. 
We do not know whether the almost Blow-analytic (R-L) equivalence and the Blow-analytic 
(R-L) equivalence give equivalence relations (cf. \cite{FKK}), though
this is the case for the blow-analytic equivalence by T.-C. Kuo
\cite{Ku} (another advantage of the definition of the blow-analytic
equivalence by T.-C. Kuo is that for an analytic function $f$ on an analytic manifold $M$ there exists a real modification $\beta:M_f\to M$ such that $f\circ\beta$ has only normal crossing singularities, by Hironaka Desingularization Theorem). The
problem comes from transitivity. However, if we admit blowings-up along non-smooth analytic centers
 in the definitions of Blow-analytic equivalence and almost
 Blow-analytic equivalence, we can face this issue.

\begin{prop}\label{eqrel} Allowing non-smooth analytic centers
 in the definitions, Blow-analytic equivalence and almost
 Blow-analytic equivalence become equivalence relations.
\end{prop}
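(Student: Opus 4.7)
Reflexivity and symmetry are immediate: $f\sim f$ is witnessed by $\pi_f=\pi_g=\id_M$ with $h=\id_M$, and symmetry follows by inverting $h$ and interchanging $\pi_f$ and $\pi_g$. Both operations preserve the additional homeomorphism condition on $M$ that distinguishes Blow-analytic from almost Blow-analytic equivalence, so only transitivity is substantive.

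For transitivity, suppose $f\sim g$ is witnessed by $(\pi_f^1:N\to M,\ \pi_g^1:L\to M,\ h_1:N\to L)$ and $g\sim k$ by $(\pi_g^2:L'\to M,\ \pi_k^2:P\to M,\ h_2:L'\to P)$. The plan is first to produce a common modification of $L$ and $L'$ over $M$, and then to transport it through the isomorphisms $h_1,h_2$. More precisely, I would construct an analytic space $\tilde L$ equipped with finite compositions of blowings-up along (possibly non-smooth) analytic centres $\alpha:\tilde L\to L$ and $\beta:\tilde L\to L'$ satisfying $\pi_g^1\circ\alpha=\pi_g^2\circ\beta$. Granted this, since $h_1$ is an analytic isomorphism, transporting the centres of $\alpha$ back through $h_1^{-1}$ produces a composition of blowings-up $\tilde\alpha:\tilde N\to N$ together with a canonical isomorphism $\varphi:\tilde N\to\tilde L$ satisfying $\alpha\circ\varphi=h_1\circ\tilde\alpha$; analogously, $\beta$ and $h_2$ yield $\tilde\beta:\tilde P\to P$ and $\psi:\tilde L\to\tilde P$ with $\tilde\beta\circ\psi=h_2\circ\beta$. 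Setting $\pi_f:=\pi_f^1\circ\tilde\alpha$, $\pi_k:=\pi_k^2\circ\tilde\beta$ and $\tilde h:=\psi\circ\varphi$, the defining relations chain to
\[ f\circ\pi_f \;=\; g\circ\pi_g^1\circ\alpha\circ\varphi \;=\; g\circ\pi_g^2\circ\beta\circ\varphi \;=\; k\circ\pi_k\circ\tilde h, \]
establishing $f\sim k$. In the Blow-analytic setting, the required homeomorphism of $M$ is the composition of those coming from $h_1$ and $h_2$.

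The real obstacle, and the reason non-smooth centres are essential, is the construction of $\tilde L$. My plan is to invoke the classical fact that any finite composition of blowings-up along analytic centres is itself a single blow-up along a coherent ideal sheaf (generically non-smooth), so one may write $L=\mathrm{Bl}_{\mathcal I_1}M$ and $L'=\mathrm{Bl}_{\mathcal I_2}M$. Then $\tilde L:=\mathrm{Bl}_{\mathcal I_1\mathcal I_2}M$ does the job: a standard universal-property computation identifies it with both $\mathrm{Bl}_{\mathcal I_2\mathcal O_L}L$ and $\mathrm{Bl}_{\mathcal I_1\mathcal O_{L'}}L'$, giving $\alpha$ and $\beta$ as single (non-smooth) blowings-up fitting into a commutative square over $M$. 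With smooth centres only, the natural candidate $L\times_M L'$ need not admit blow-up descriptions on either side, which is precisely why the proposition calls for the enlarged class of centres; once $\tilde L$ is in hand, the remainder is formal diagram chasing.
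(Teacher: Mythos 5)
Your overall strategy is sound, and the pivotal construction — realizing the common refinement of two blowings-up as the blow-up along the product ideal $\mathcal I_1\mathcal I_2$ — is precisely the key fact the paper also exploits (stated there as: blowing up $L_1=\mathrm{Bl}_{\mathcal J_1}M_1$ along $g_1^{-1}\mathcal J_2\cdot\mathcal O_{L_1}$ and blowing up $L_2=\mathrm{Bl}_{\mathcal J_2}M_1$ along $g_2^{-1}\mathcal J_1\cdot\mathcal O_{L_2}$ give canonically isomorphic results). The transport of centers through the isomorphisms $h_1,h_2$ and the chain of equalities at the end are also correct, and match what the paper does with its $\tau_{2,j}$'s and $\tau_{4,i}$'s.

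Where you diverge materially is in how you reach a situation where that key fact applies. You collapse each composition $L\to M$ and $L'\to M$ to a \emph{single} blow-up by invoking, as a black box, the statement that a finite composition of blowings-up along analytic centers is again one blow-up along a coherent ideal sheaf. This is indeed a standard theorem for Noetherian schemes and for projective morphisms of complex analytic spaces, but it is \emph{not} elementary: its proof passes through relative $\mathrm{Proj}$, the choice of a sufficiently twisted direct image, and Grauert-type coherence, and in the real analytic category (especially over a non-compact manifold) a clean reference is not immediate. The paper deliberately avoids invoking it. Instead it keeps both modifications as honest sequences of blowings-up and iterates the commuting-two-blowings-up lemma to build a rectangular grid $\{M_{i,j}\}$, with rows and columns all blowings-up, whose two outer edges reproduce the two given sequences; the common refinement $M_{k_2,k_3}$ is then reached without ever compressing a composition into a single blow-up. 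The moral: your route is shorter and conceptually cleaner \emph{if} you can cite or prove the collapsing theorem in the real analytic setting; the paper's route is longer to write but self-contained, resting only on the universal property of a single blow-up (together with the real analytic Cartan/Oka theory the paper already assumes). So this is not a wrong approach, but as written it owes the reader a justification of the ``classical fact'' in the category actually in play — and if that justification is supplied by iterating the pairwise commuting lemma, one has essentially reproduced the paper's grid argument anyway.
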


\begin{proof} Let $\mathcal O_{M_1}$ denote the sheaf of analytic function germs on an analytic space $M_1$. 
For a morphism $g:M_1\to M_2$ of analytic spaces, as locally ringed spaces, and for a sheaf of $\mathcal O_{M
_2}$-ideals $\mathcal I$, let $g^{-1}\mathcal I\!\cdot\!\mathcal O_{M_1}$ denote the inverse image ideal sheaf, i.e., 
the sheaf of $\mathcal O_{M_1}$-ideals generated by the image of the inverse image $g^{-1}\mathcal I$ of 
$\mathcal I$ (see \cite{Ht}). 
Let $f_i,\ i=1,..,4$, be analytic functions on an analytic manifold
$M$ with $f_1$ almost Blow-analytically equivalent to $f_2$, $f_3$
almost Blow-analytically equivalent to $f_4$ and $f_2=f_3$. Namely, let 
$$N_{i,k_i}
\overset {\pi_{i,k_i}}
\longrightarrow\cdots\longrightarrow
N_{i,1}\overset{\pi_{i,1}}\longrightarrow N_{i,0}=M, \text{~~~for~~~}i=1,\ldots,4$$
be sequences of blowings-up with respect to coherent sheaves of non-zero $\mathcal O_{N_{i,j}}$-ideals, and 
let $\tau_2:N_{1,k_1}\to N_{2,k_2}$ and $\tau_4:N_{3,k_3}\to N_{4,k_4}$ be isomorphisms of locally 
ringed spaces such that 
$$f_i\circ\pi_{i,1}\circ\cdots\circ\pi_{i,k_i}\circ\tau_i=f_{i-1}\circ\pi_{i-1,1}
\circ\cdots\circ\pi_{i-1,k_{i-1}},$$ $i=2,4$. 
If the compositions
$$N_{2,k_2}\overset
{\pi_{2,k_2}}\longrightarrow\cdots\longrightarrow M ~~~\text{and~~~}N_{3,k_3}\overset{\pi_{
3,k_3}}\longrightarrow\cdots\longrightarrow M$$
coincide each other, then $\tau_4\circ\tau_2$ is an 
isomorphism from $N_{1,k_1}$ to $N_{4,k_4}$ and 
$$f_4\circ\pi_{4,1}\circ\cdots\circ\pi_{4,k_4}\circ\tau_4
\circ\tau_2=f_1\circ\pi_{1,1}\circ\cdots\circ\pi_{1,k_1}.$$ 

Hence it suffices to reduce the problem to the case where
$$N_{2,k_2}\overset {\pi_{2,k_2}}\longrightarrow
\cdots\longrightarrow M \text{~~~and~~~}N_{3,k_3}\overset {\pi_{
3,k_3}}\longrightarrow\cdots\longrightarrow M$$
coincide. 

We will use the following fact. 
Let $\mathcal J_1$ and $\mathcal J_2$ be coherent sheaves of non-zero $\mathcal O_{M_1}$-ideals on a reduced and 
irreducible analytic space $M_1$, and let $g_i:L_i\to M_1$ denote the blowing-up with respect to $\mathcal J_i$
for $i=1,2$. Let $h_1:N_1\to L_1$ and $h_2:N_2\to L_2$  denote the blowings-up with respect to $\ g_1^{-1}\mathcal J_2\!\cdot\!\mathcal O
_{L_1}$ and $g_2^{-1}\mathcal J_1\!\cdot\!\mathcal O_{L_2}$,
respectively. 
Then there exists an unique isomorphism $\tau:N_1\to N_2$ such that $g_1\circ h_1=g_2\circ h_2\circ
\tau$.

$$\xymatrix{
N_1\overset\tau\cong N_2  \ar[r]^{h_2}\ar[d]_{h_1} &L_2 \ar[d]_{g_2} \\
L_1\ar[r]^{g_1} &M_1 \\ 
}$$

Actually, by the universal property theorem of the blowing-up (see \cite{Ht} in
the algebraic case) apply to the blowing-up 
$g_2:L_2\to M_1$ and the morphism $g_1\circ h_1:N_1\to M_1$, there exists an unique morphism $\pi:N_1\to L_2$ such that $g_2\circ\pi=g_1\circ h_1$ since 
$$(g_1\circ h_1)^{-1}\mathcal J_2\!\cdot\!\mathcal O_{N_1}\,(=h^{-1}_1(g_1^{-1}\mathcal J_2\!\cdot\!\mathcal O_{L_1})\!
\cdot\!\mathcal O_{N_1})$$ 
is invertible. 
Next, considering the blowing-up $h_2:N_2\to L_2$ and the morphism $\pi:N_1\to L_2$, we obtain a unique 
morphism $\tau:N_1\to N_2$ such that $\pi=h_2\circ\tau$ since 
$$\pi^{-1}(g_2^{-1}\mathcal J_1\!\cdot\!
\mathcal O_{L_2})\!\cdot\!\mathcal O_{N_2}\,(=(g_2\circ\pi)^{-1}\mathcal J_1\!\cdot\!\mathcal O_{N_2}=(g_1\circ h_1)
^{-1}\mathcal J_1\!\cdot\!\mathcal O_{N_2}=h^{-1}_1(g_1^{-1}\mathcal
J_1\!\cdot\!\mathcal O_{L_1})\mathcal O_{N_2})$$ 
is 
invertible. 
Then $$g_1\circ h_1=g_2\circ\pi=g_2\circ h_2\circ\tau.$$ 
By the same reason, we have an unique morphism $\tau':N_2\to N_1$ such that $g_1\circ h_1\circ\tau'=g_
2\circ h_2$. 
Hence $\tau$ is an isomorphism.

By this fact, we obtain a commutative diagram of blowings-up with respect to coherent sheaves of $\mathcal O_
{M_{i,j}}$-ideals\,:

$$\xymatrix{
M_{k_2,k_3} \ar[r]^{\nu_{k_2,k_3}}\ar[d]_{\mu_{k_2,k_3}} & M_{k_2-1,k_3} \ar[r]\ar[d]_{\mu_{k_2-1,k_3}} & \cdots \ar[r]^{}& M_{0,k_3} \ar[d]_{\mu_{0,k_3}}\\
M_{k_2,k_3-1} \ar[r]^{\nu_{k_2,k_3-1}}\ar[d]_{} &  \ar[r]\ar[d]_{} &
\cdots \ar[r]^{}& M_{0,k_3-1} \ar[d]_{}\\
\vdots \ar[d]_{} & \vdots \ar[d]_{} & & \vdots\ar[d]_{}\\
M_{k_2,0} \ar[r]^{\nu_{k_2,0}} & M_{k_2-1,0} \ar[r] & \cdots \ar[r]^{}& M_{0,0}\\
}$$
such that 
$$M_{k_2,0}\overset
{\nu_{k_2,0}}\longrightarrow\cdots\overset {\nu_{1,0}}\longrightarrow M_
{0,0} \text{~~~and~~~} M_{0,k_3}\overset
{\mu_{0,k_3}}\longrightarrow\cdots\overset {\mu_{0,1}}\longrightarrow M
_{0,0}$$ 
coincide with 
$$N_{2,k_2}\overset
{\pi_{2,k_2}}\longrightarrow\cdots\overset {\pi_{2,1}}\longrightarrow
M \text{~~~ and~~~} N_{3,k_3}\overset
{\pi_{3,k_3}}\longrightarrow\cdots\overset {\pi_{3,1}}\longrightarrow
M$$ 
respectively. 
In particular, 
$$
\pi_{3,1}\circ\cdots\circ\pi_{3,k_3}\circ\nu_{1,k_3}\circ\cdots\circ\nu_{k_2,k_3}=\pi_{2,1}\circ\cdots
\circ\pi_{2,k_2}\circ\mu_{k_2,1}\circ\cdots\circ\mu_{k_2,k_3}. 
$$
Since $\tau_2:N_{1,k_1}\to M_{k_2,0}$ is an isomorphism, we obtain a
sequence of blowings-up 
$$N_{1,k_1+k_3}
\overset {\pi_{1,k_1+k_3}}\longrightarrow\cdots\overset {\pi_{1,k_1+1}}\longrightarrow N_{1,k_1}$$
and several isomorphisms 
$$\tau_{2,k_3}:N_{1,k_1+k_3}\to M_{k_2,k_3},~~\ldots~,~~\tau_{2,0}=\tau_2:N_{1,
k_1}\to M_{k_2,0}$$ 
such that the following diagram is commutative.

$$\xymatrix{
N_{1,k_1+k_3} \ar[r]^{\pi_{1,k_1+k_3}}\ar[d]_{\tau_{2,k_3}} &
N_{1,k_1+k_3-1} \ar[r]\ar[d]_{\tau_{2,k_3-1}} & \cdots
\ar[r]^{\pi_{1,k_1+1}}& N_{1,k_1} \ar[d]_{\tau_{2,0}}\\
M_{k_2,k_3} \ar[r]^{\mu_{k_2,k_3}} & M_{k_2,k_3-1}\ar[r] & \cdots
\ar[r]^{\mu_{k_2,1} } & M_{k_2,0} .\\ 
}$$
Then the following equalities hold:

$$
f_2\circ\pi_{2,1}\circ\cdots\circ\pi_{2,k_2}\circ\mu_{k_2,1}\circ\cdots\circ\mu_{k_2,k_3}\circ\tau_{2,
k_3}=\cdots=$$
$$
f_2\circ\pi_{2,1}\circ\cdots\circ\pi_{2,k_2}\circ\tau_{2,0}\circ\pi_{1,k_1+1}\circ\cdots\circ\pi_{1,k_
1+k_3}=$$
$$ f_1\circ\pi_{1,1}\circ\cdots\circ\pi_{1,k_1}\circ\pi_{1,k_1+1}\circ\cdots
\circ\pi_{1,k_1+k_3}. 
$$

In the same way we obtain a commutative diagram:

$$\xymatrix{
M_{k_2,k_3} \ar[r]^{\nu_{k_2,k_3}}\ar[d]_{\tau_{4,k_2}} &M_{k_2-1,k_3} \ar[r]\ar[d]_{\tau_{4,k_2-1}} & \cdots \ar[r]^{\nu_{1,k_3}}& M_{0,k_3}\ar[d]_{\tau_{4,0}}\\
N_{4,k_2+k_4} \ar[r]^{\pi_{4,k_2+k_4}} & N_{4,k_2+k_4-1} \ar[r] & \cdots \ar[r]^{\pi_{4,k_4+1}} &N_{4,k_4} \\ 
}$$
where the horizontal morphisms are blowings-up and the vertical ones are isomorphisms, with $\tau_{4,0}
=\tau_4$, and 

$$f_4\circ\pi_{4,1}\circ\cdots\circ\pi_{4,k_4}\circ\pi_{4,k_4+1}\circ\cdots\circ\pi_{4,k_2+k_4}\circ\tau_
{4,k_2}=$$
$$f_3\circ\pi_{3,1}\circ\cdots\circ\pi_{3,k_3}\circ\nu_{1,k_3}\circ\cdots\circ\nu_{k_2,k_3}.$$

Therefore

$$
f_4\circ\pi_{4,1}\circ\cdots\circ\pi_{4,k_2+k_4}\circ\tau_{4,k_2}\circ\tau_{2,k_3}=f_3\circ\pi_{3,1}
\circ\cdots\circ\nu_{k_2,k_3}\circ\tau_{2,k_3}=$$
$$
f_2\circ\pi_{2,1}\circ\cdots\circ\mu_{k_2,k_3}\circ\tau_{2,k_3}=f_1\circ\pi_{1,1}\circ\cdots\pi_{1,k_1+
k_3}
$$
which proves the transitivity for almost Blow-analytic
equivalence. Considering Blow-analytic
equivalence, i.e. if moreover $\tau_2$ and $\tau_4$ induce homeomorphisms of $M$, then $\tau_{4,k_2}\circ\tau_{2,k_3}$ induces automatically a homeomorphism of $M$. 
\end{proof}

%%%%%%%%%%%%%%%%%%%%%%%%%%%%%%%%%%%%%%%%%%%%%%%%%%%%%%%%%%%%%%%%%%%%%%%%%%%%%%%%%%%%%%%%%%%%%%%%%%%

\subsection{Blow-analytic versus almost Blow-analytic equivalence}\label{sect2}
In $\R^2$, blow-analytic equivalence coincides with almost
Blow-analytic equivalence. We prove in this section that these
relations do not coincide in general, by giving an explicit example in
$\R^4$ of almost Blow-analytically equivalent functions whose germs at
$0$ are not blow-analytically equivalent.

Let $f$ and $g$ be the $C^\omega$ functions on $\R^4$ in variables $(u,v,w,x)$ defined by 
$$
f=\phi\psi\xi,~~~~~ g=\phi\psi\eta,$$
where
$$
\phi=u^2+v^2,~~~~~ \psi=u^4+v^2+u^2w^2,$$
$$
\xi=u^4+v^2+u^2(w-x)^2,~~~~~ \eta=u^4+(v-xu)^2+u^2w^2.
$$

\begin{lemma} The functions $f$ and $g$ are almost Blow-analytically equivalent.
\end{lemma}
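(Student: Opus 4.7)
The plan is to exhibit a single sequence of blow-ups $\pi:N\to\R^4$ (used for both $f$ and $g$) and an analytic diffeomorphism $h:N\to N$ such that $f\circ\pi=g\circ\pi\circ h$. Both $f$ and $g$ are products of sums of squares vanishing exactly on the smooth $2$-plane $\{u=v=0\}$, so the natural first blow-up $\pi_1$ is of this common zero set. In the standard $u$-chart ($v=uv'$) one obtains
\[ f\circ\pi_1 = u^6(1+v'^2)A B, \qquad g\circ\pi_1 = u^6(1+v'^2)A C, \]
where $A=u^2+v'^2+w^2$, $B=u^2+v'^2+(w-x)^2$ and $C=u^2+(v'-x)^2+w^2$ are Morse-type factors meeting the exceptional divisor $\{u=0\}$ along three coordinate lines. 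The decisive observation is that $A$ is fixed, and $B$, $C$ are interchanged by the swap $v'\leftrightarrow w$, while the leftover factor $1+v'^2$ coming from $\phi/u^2$ is not.

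To realise the swap as an actual diffeomorphism, I would perform two further common blow-ups: first the $1$-dimensional Morse line $\{u=v'=w=0\}$ (the zero locus of $A$), then the codimension-two plane $\{u=x=0\}$, where the strict transforms of $B$ and $C$ acquire the residual Morse singularities produced by the previous step. Each center is smooth, of codimension at least $2$, lies in the singular locus of the corresponding strict transform, and crosses the existing exceptional divisors normally. A direct calculation in the iterated main chart (coordinates $(u,v'',w',x')$ with $v'=uv''$, $w=uw'$, $x=ux'$) then yields
\[ f\circ\pi = u^{10}(1+u^2v''^2)(1+v''^2+w'^2)(1+v''^2+(w'-x')^2), \]
\[ g\circ\pi = u^{10}(1+u^2v''^2)(1+v''^2+w'^2)(1+w'^2+(v''-x')^2), \]
which differ only by the swap $v''\leftrightarrow w'$ in the third factor (and the matching discrepancy that would result in the first factor).

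I would then take $h(u,v'',w',x')=(u\mu,\,w',\,v'',\,x')$, where $\mu>0$ is the unique positive analytic function satisfying
\[ \mu^{10}\bigl(1+\mu^2 u^2 w'^2\bigr)=1+u^2 v''^2. \]
Existence, positivity, and $\mu(0,\cdot)=1$ follow from the strict monotonicity of the left-hand side in $\mu>0$, and analyticity follows from the implicit function theorem. Direct substitution then confirms $f\circ\pi=g\circ\pi\circ h$ in this chart, and the Jacobian determinant equals $-(\mu+u\partial_u\mu)$, which is nonzero in a neighbourhood of the exceptional divisor.

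The main obstacle I anticipate is globalisation. In the $v$-chart of $\pi_1$ the function $f\circ\pi_1$ is already $v^6$ times a positive analytic unit (because $\xi/v^2\geq 1$), whereas $g\circ\pi_1$ picks up an extra factor $v^2u'^4+(1-xu')^2+u'^2w^2$ vanishing along the curve $\{v=w=0,\,xu'=1\}$, i.e.\ the image on the exceptional divisor of a Morse locus of $\eta$ that was invisible from the $u$-chart side. The subsequent blow-ups $\pi_2,\pi_3$ do not reach this chart, and the formula for $h$ becomes singular upon the chart transition. Resolving this requires either inserting additional blow-ups performed on the $v$-chart side, symmetric with those above, or modifying $h$ on that region by a parallel swap-and-rescaling, and then verifying that all local prescriptions agree on chart overlaps. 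This gluing, together with a global verification of the normal crossing hypotheses for every center on the whole of $N$, is where the bulk of the technical work lies; the local symmetry identified in the main chart is only the starting point of the argument.
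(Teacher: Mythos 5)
Your local computations are correct and, after two further blow-ups, reproduce essentially the same swap the paper identifies after the first: in the principal chart $f\circ\pi$ and $g\circ\pi$ are interchanged by $v''\leftrightarrow w'$ up to a rescaling, and the factor $\mu$ you obtain from the implicit function theorem cleanly absorbs the residual $(1+u^2v''^2)$ versus $(1+\mu^2u^2w'^2)$ discrepancy. Where the argument fails is precisely at the globalisation step you flag, but the obstacle is not bookkeeping: the swap does not extend to an analytic automorphism of $N$, and no analytic correction of it can. Descended to $M$ (after the single blow-up of $\{u=v=0\}$) your $h$ reads $(u,t,w,x)\mapsto(u\mu,\mu w,\mu t,\mu x)$ in the $u$-chart; in the $v$-chart ($s=1/t$, $u=sv$) the image has $s$-coordinate $1/(\mu w)$, which is not analytic along $\{w=0\}$. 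That is exactly where the residual Morse locus of $g\circ\pi$ lives: the curve $M_3=\{u=w=0,\ t=x\}$ enters the $v$-chart along $\{v=w=0,\ xs=1\}$, whereas the matching locus $M_2=\{u=t=0,\ w=x\}$ of $f\circ\pi$ is confined to the $u$-chart. Any $h$ with $f\circ\pi=g\circ\pi\circ h$ must carry one of these curves to the other, and the swap (hence your $h$) is not analytic in the region this forces you into.

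The paper's proof is built around this obstruction rather than pushing through it. After the one blow-up $\pi$ it constructs only a $C^\infty$ diffeomorphism of $M$, namely the rotation $(t,w)\mapsto(w,-t)$ cut off to the identity outside an $x$-dependent ball and composed with a $C^\infty$ interpolation of the tenth-root rescalings $(1+t^2)^{1/10}$, $(1+w^2)^{1/10}$ — the same discrepancy your $\mu$ handles, but handled smoothly rather than analytically, precisely because it cannot be handled analytically at that stage. It then performs two \emph{different} further sequences of blow-ups $\pi_f$ and $\pi_g$ (with centers over $M_1\cup M_2$ and $M_1\cup M_3$ respectively, not a single common tower) to bring $f\circ\pi\circ\pi_f$ and $g\circ\pi\circ\pi_g$ to normal crossing form, carries the $C^\infty$ equivalence through, and finally invokes Theorem~3.1,(1) of \cite{FS}, which upgrades $C^\infty$ right equivalence of normal crossing functions to $C^\omega$ right equivalence. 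That theorem is the essential external input; the analytic diffeomorphism whose existence the lemma asserts is produced abstractly by it, not written down. A direct construction along the lines you propose would need a genuinely new idea to get past the chart obstruction, and as it stands the proposal is incomplete.
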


\begin{proof}Set $X=\{0\}\times\R^2\subset\R^4$. 
Let $\Lambda$ denote the set of half-lines in $\R^4$ starting from
points in $X$ and orthogonal to $X$. Denote by $e(\lambda)$ the
endpoint of $\lambda\in\Lambda$, namely $e(\lambda)=\lambda\cap
X$. Note that the functions $f|_\lambda$ and $g|_\lambda$ have singularities 
only at $e(\lambda)$ for any $\lambda\in\Lambda$, and that $f(\lambda)=g(\lambda)=[0,\,\infty)$.

We will prove the almost Blow-analytic equivalence of $f$ and $g$ in
such a way that the diffeomorphism that will realize the equivalence, induces a diffeomorphism 
of $\R^4-X$ carrying any $\lambda-e(\lambda)$, with
$\lambda\in\Lambda$, to some $\lambda'-e(\lambda')$, with $\lambda'\in\Lambda$.

Let $\pi:M\to\R^4$ denote the blowing-up along center $X$. 
Then 
$$
M=\{(s:t,u,v,w,x)\in\PP(1)\times\R^4:sv=tu\},$$
$$
f\circ\pi(1:t,u,v,w,x)=u^6(1+t^2)(u^2+t^2+w^2)(u^2+t^2+(w-x)^2),$$
$$
g\circ\pi(1:t,u,v,w,x)=u^6(1+t^2)(u^2+t^2+w^2)(u^2+(t-x)^2+w^2),$$
$$
f\circ\pi(s:1,u,v,w,x)=v^6(1+s^2)(s^4v^2+1+s^2w^2)(s^4v^2+1+s^2(w-x)^2),$$
$$
g\circ\pi(s:1,u,v,w,x)=v^6(1+s^2)(s^4v^2+1+s^2w^2)(s^4v^2+(1-xs)^2+s^2w^2)
$$
and for each $\lambda\in\Lambda$, the set
$\pi^{-1}(\lambda-e(\lambda))$ is defined by
\begin{equation}\tag{*}
\{(1:t_0,u,t_0u,w_0,x_0):u\in(0,\,\infty)\}
\end{equation}
or 
$$\{(s_0:1,s_0v,v,w_0,x_0):v\in(0,\,\infty)\},$$
depending on the chart, 
for some  $s_0,t_0,w_0,x_0\in\R$. Hence
$$\Sing f\circ\pi=\Sing g\circ\pi=\pi^{-1}(X)$$
and the germs of $f\circ\pi$ and $g\circ\pi$ at points of $\pi^{-1}(X)-M_1-M_2$ and $\pi^{-1}(X)-
M_1-M_3$, respectively, are sixth powers of some regular function germs, where 
$$
M_1=\{(1:0,0,0,0,x)\in M\},$$
$$
M_2=\{(1:0,0,0,w,x)\in M:w=x\},$$
$$
M_3=\{(1:t,0,0,0,x)\in M:t=x\}.
$$

We will construct below a $C^\infty$ diffeomorphism $h$ of $M$ such
that $f\circ\pi$ is equal to $g\circ\pi\circ 
h$, the image of $M_1\cup M_2$ under $h$ is equal to $M_1\cup M_3$ and $h$ is of class $C^\omega$ on a neighborhood of $M_1\cup 
M_2$. Assuming the existence of such a diffeomorphism, we obtain by easy calculations two compositions 
of two sequences of blowings-up $\pi_f:N\to M$ and $\pi_g:L\to M$ along smooth analytic centers 
and a $C^\infty$ diffeomorphism $\tilde h:N\to L$ such that the union of centers in $M$ and the images of the centers is $M_1\cup M_2\cup M_3$, $f\circ\pi\circ\pi_f=g\circ\pi\circ\pi
_g\circ\tilde h$ and the functions $f\circ\pi\circ\pi_f$ and $g\circ\pi\circ\pi_g$ have only normal crossing 
singularities. Therefore $f\circ\pi\circ\pi_f$ and $g\circ\pi\circ\pi_g$ 
are $C^\omega$ right equivalent by Theorem 3.1,(1) in \cite{FS}, i.e. $f$ and $g$ are almost Blow-analytically equivalent.

Now we give the construction of the expected $C^\infty$ diffeomorphism $h$.
Denote by $(t,w)$ the variables of $\R^2$, and for each $x\in\R$, let $B_{|x|}$ denote the ball 
in $\R^2$ with center 0 and radius $|x|$. 
Let $h_x$ be a $C^\infty$ diffeomorphism of $\R^2$ which is identical
outside of $B_{2+|x|}$, with
$h_x(t,w)=(w,-t)$ on $B_{1+|x|}$ and such that the map 
$$\R^3 \ni (t,w,x) \to h_x(t,w) \in \R^2$$
is of class $C^\infty$. Define the map $h:\R^4\to\R^4$ by $h(u,t,w,x)=(u,h_x(t,w),
x)$. Then $h$ is a $C^\infty$ diffeomorphism. 
Regarding $\R^4$ as a subset of $M$ by the map 
$$(u,t,w,x)\to(1:t,u,tu,w,x),$$ 
we consider $h$ defined on $\R^4$ in $M$ and we extend it to the whole
of $M$ by
the identity map. Therefore we obtain a $C^\infty$ diffeomorphism $\tilde h$ of
$M$ such that 
\begin{itemize}
\item $\tilde h(\pi^{-1}(X))=\pi^{-1}(X)$, 
\item the image $U$ of $\R\times\cup_{x\in\R}(B_{1+|x|}\times\{x\})$ in $M$ under the inclusion 
map is a neighborhood of $M_1\cup M_2\cup M_3$, 
\item $\tilde h$ is of class $C^\omega$ on $U$, 
\item $\tilde h(M_1)=M_1$, 
\item $\tilde h(M_2)=M_3$, 
\item for any 
$\lambda\in\Lambda$ there exists $\lambda'$ by $(*)$ such that $\tilde h(\pi^{-1}(\lambda-e(\lambda))
=\pi^{-1}(\lambda'-e(\lambda'))$, 
\item $\tilde h$ is of class $C^\omega$ on $U$, 
\item and $f\circ\pi/(1+t^2)=g\circ\pi
\circ\tilde h/(1+w^2)=u^6(u^2+t^2+w^2)(u^2+t^2+(w-x)^2)$ on $U$.
\end{itemize}
We modify $\tilde h$ so that $f\circ\pi=g\circ\pi
\circ\tilde h$ on $U$ as follows. 
Set 
$$
h_1(u,t,w,x)=(1+w^2)^{1/10}(u,t,w,x),
$$
$$
h_2(u,t,w,x)=(1+t^2)^{1/10}(u,t,w,x), 
$$
let $h_3$ be a $C^\infty$ diffeomorphism of $\R^4$ which equals $h_1^{-1}\circ h_2$ on $\R\times\cup_{x\in\R}(B_{\alpha(x)}\times\{x\})$ and the identity map outside of $\R\times\cup_{x\in\R}(B_{\beta(x)}\times\{x\})$ for some positive $C^\infty$ functions $\alpha$ and $\beta$ on $\R$, and extend $h_3$ to a $C^\infty$ diffeomorphism $\tilde h_3$ of $M$ as above. 
Then we can choose $h_1,\ h_2,\ \alpha$ and $\beta$ so that $\tilde h\circ\tilde h_3$ satisfies the above conditions on $\tilde h$ except the last, and the last becomes  
$$
f\circ\pi=g\circ\pi\circ\tilde h\circ\tilde h_3\quad\text{on}\ U
$$
We replace $\tilde h$ with $\tilde h\circ\tilde h_3$ and write $\tilde h\circ\tilde h_3$ as $\tilde h$. 
Thus we have $f\circ\pi=g\circ\pi\circ\tilde h$ on $U$. \par
It remains only to modify $\tilde h$ outside of $U$ so that the
equality $f\circ\pi=g\circ\pi\circ\tilde h$ holds everywhere. 
For any $a\in M$, define $\tilde {\tilde h}(a)$ by  $$\tilde {\tilde h}(a)=\overline{\pi^{-1}(\lambda-e(\lambda))}\cap(g
\circ\pi)^{-1}(f\circ\pi(a))$$
for some $\lambda\in\Lambda$ satisfying $\tilde h(a)\in\overline{\pi^{
-1}(\lambda-e(\lambda))}$. 
Then $\tilde {\tilde h}$ is a well-defined $C^\infty$ diffeomorphism of $M$ since $$\tilde {\tilde h}(a)
=\overline{\pi^{-1}(\lambda-e(\lambda))}\cap(g\circ\pi)^{-1}(f\circ\pi(a))$$ 
for $a\in U$, the set $U$ 
is a union of some $\overline{\pi^{-1}(\lambda-e(\lambda))}$, $\lambda\in\Lambda$, and because the 
restrictions of $f\circ\pi$ and $g\circ\pi$ to each $\overline{\pi^{-1}(\lambda-e(\lambda))}$ 
outside of $U$ are $C^\infty$ right equivalent to the function $$[0,\,\infty)\ni z\to z^6\in\R.$$ 
Now the equality $f\circ\pi=g\circ\pi\circ\tilde {\tilde h}$ is
satisfied on $M$.
\end{proof}

\begin{lemma}The germs of $f$ and $g$ at the origin are not blow-analytically 
equivalent.
\end{lemma}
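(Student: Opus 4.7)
The plan is to exhibit a local Fukui-type invariant that distinguishes the germs. For an analytic function $h$ near a point $p\in\R^{4}$, set
\[
A(h,p)=\{\operatorname{ord}_{t} h(\gamma(t)) : \gamma \text{ analytic arc with } \gamma(0)=p,\ h\circ\gamma\not\equiv 0\}.
\]
If the germs of $f$ and $g$ at $0$ were blow-analytically equivalent through a homeomorphism $\varphi$ induced by an analytic isomorphism $\Phi\colon M_{f}\to M_{g}$ between real modifications $\beta_{f},\beta_{g}$, I would first show that $A(f,p)=A(g,\varphi(p))$ for every $p$ in a neighborhood of $0$. Indeed, any analytic arc $\gamma$ at $p$ lifts, by properness and birationality of $\beta_{f}$, to an analytic arc $\tilde\gamma$ in $M_{f}$; the composition $\beta_{g}\circ\Phi\circ\tilde\gamma=\varphi\circ\gamma$ is then an analytic arc at $\varphi(p)$, and the identity $f=g\circ\varphi$ equates the corresponding orders.

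The heart of the argument is the computation of $\{p\in X : 8\in A(f,p)\}$, where $X=f^{-1}(0)=g^{-1}(0)=\{u=v=0\}$. At a point $p=(0,0,w_{0},x_{0})\in X$, the three factors $\phi,\psi,\xi$ of $f$ each have multiplicity two, with leading quadratic forms in the transverse coordinates $(u,v)$ equal respectively to $u^{2}+v^{2}$, $v^{2}+w_{0}^{2}u^{2}$ and $v^{2}+(w_{0}-x_{0})^{2}u^{2}$. When all three are positive definite, i.e.\ $w_{0}\neq 0$ and $w_{0}\neq x_{0}$, each factor $h$ satisfies $\operatorname{ord}_{t}h(\gamma)=2\min(\operatorname{ord}u,\operatorname{ord}v)$ along any arc, so $A(f,p)\subset 6\N$ and in particular $8\notin A(f,p)$. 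When $w_{0}=0$ (resp.\ $w_{0}=x_{0}$), the factor $\psi$ (resp.\ $\xi$) degenerates to the rank-one form $v^{2}$, and along the arc $\gamma(t)=(t,0,w_{0},x_{0})$ the three factors acquire orders $2,4,2$, summing to $\operatorname{ord}_{t}f(\gamma)=8$. A short case analysis of the exponents $(\operatorname{ord}u,\operatorname{ord}v,\operatorname{ord}w,\operatorname{ord}x)$ of an arc through the origin shows that $8\notin A(f,0)$, so
\[
\{p\in X : 8\in A(f,p)\}=(X_{1}\cup X_{2})\setminus\{0\},
\]
where $X_{1}=\{(0,0,0,x):x\in\R\}$ and $X_{2}=\{(0,0,x,x):x\in\R\}$.

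The analogous analysis for $g=\phi\psi\eta$ rests on the fact that the leading quadratic form of $\eta$ at $p$ is $(v-x_{0}u)^{2}+w_{0}^{2}u^{2}$, which is positive definite precisely when $w_{0}\neq 0$. Thus both $\psi$ and $\eta$ degenerate only on $X_{1}$, and one obtains $\{p\in X : 8\in A(g,p)\}=X_{1}\setminus\{0\}$. The invariance would then force $\varphi$ to restrict to a homeomorphism from the four-component set $(X_{1}\cup X_{2})\setminus\{0\}$ onto the two-component set $X_{1}\setminus\{0\}$, which is impossible. The main technical obstacle will be extending the invariance $A(f,p)=A(g,\varphi(p))$ from the single point $p=0$, where it follows from Fukui's classical theorem, to all $p$ in a neighborhood of $0$; this requires a careful treatment of the lift of arcs through the modifications at nearby points, using the global analyticity of $\Phi$ rather than only its germ at $\beta_{f}^{-1}(0)$.
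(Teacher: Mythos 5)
Your proof proposal is correct, and it takes a genuinely different (and arguably cleaner) route than the paper. Both start from the same arc-lifting construction through the real modifications, and both exploit the structure of the set of orders that $f$ and $g$ attain along analytic arcs based at points near~$0$. The difference is in what you do with that structure. The paper looks at the pair $(C_a,\,C_a\cap o_f^{-1}(6))$ of all arcs at $a$ versus those on which $f$ has order exactly $6$, compares homotopy groups across strata, and for that it must show that the induced arc-correspondence $\tau$ restricts to a \emph{homeomorphism} between the order-$6$ loci; this is done via a transversality analysis of the lifts through each blowing-up. You instead track only the coarse numerical invariant $A(f,p)=\{\operatorname{ord}_t(f\circ\gamma):\gamma(0)=p,\ f\circ\gamma\not\equiv0\}$, whose invariance requires nothing more than the arc correspondence being a bijection (equivalently, just the equality $f\circ\gamma=g\circ\tau(\gamma)$), so no homeomorphism statement is needed. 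Your computations of the $8$-loci are correct: for $f$ one gets the four punctured half-lines $(X_1\cup X_2)\setminus\{0\}$ and for $g$ the two punctured half-lines $X_1\setminus\{0\}$, with $8\notin A(f,0)$ and $8\notin A(g,0)$ since the achievable orders at the origin fall in $\{6\}\cup\{10,12,\dots\}$ in both cases, and the component count yields the contradiction. The sets you single out are precisely the paper's strata $X_3$ and $Y_3$; the paper reaches essentially the same conclusion ($\tau_0(X_3\cup X_4)=Y_4$) but via homotopy types.

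One remark: your final worry about extending $A(f,p)=A(g,\varphi(p))$ from $p=0$ to nearby $p$ is not really an obstacle. The germ of $\Phi$ along $\beta_f^{-1}(0)$ is defined on an open neighborhood $W$ of $\beta_f^{-1}(0)$ in $M_f$, and by properness of $\beta_f$ one has $\beta_f^{-1}(U)\subset W$ for $U$ a sufficiently small neighborhood of $0$; the arc-lifting argument then applies verbatim at every $p\in U$. The same point is implicit in the paper's construction of the map $\tau$ on arcs based at points of $U$.
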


We have to prove that there do not exist open neighborhoods $U$ and
$V$ of the origin in $\R^4$ such that $f|_U$ 
and $g|_V$ are blow-analytically equivalent and the induced
homeomorphism from $U$ to $V$ fixes the origin.

\begin{proof} We introduce an invariant of blow-analytic equivalence, which is a generalisation of one in \cite{Fu}. 
Let $C$ denote the set of germs at the origin of analytic curves $c:[0,\,\epsilon)\to\R^4$, with
$\epsilon>0$. We give a topology on $C$ by identifying $C$ with 4-product of the one-variable 
convergent power series ring $\R\langle\langle t\rangle\rangle^4$ and choosing the product topology 
on $\R\langle\langle t\rangle\rangle$, i.e.
$$\R\langle\langle t\rangle\rangle\ni\sum_{n=0}^\infty  a_{n,k}t^n\to
0 \text{~~~as~~~} k \to \infty$$ if for any $n=0,\ldots,
\infty$
$$ \R\ni a_{n,k}\to 0 \text{~~~as~~~} k\to\infty.$$ 
Then $p:C\to\R^4$, defined by $p(c)=c(0)$, is a topological fibre
bundle.

Let $\pi:M\to\R^4$ be the composition of a finite sequence of blowings-up of $\R^4$ along smooth analytic 
centres. 
The map $\pi$ naturally induces a surjective $C^0$ map $\pi_*:C_M\to C$ such that $p\circ\pi_*=\pi
\circ p_M$, where $C_M$ is the analytic curve germs in $M$ and $p_M:C_M\to M$ is defined by $p_M
(c)=c(0)$.

Let us assume that the germs of $f$ and $g$ at 0 are blow-analytically
equivalent. Then there exist open 
neighbourhoods $U$ and $V$ of 0 in $\R^4$ and a map
$$
\tau:\{c\in C:c(0)\in U,\ f\circ c\not\equiv 0\}\to\{c\in C:c(0)\in V,\ g\circ c\not\equiv 0\}
$$
such that $f\circ c=g\circ\tau(c)$ as
analytic function germs in the variable $t$ for $c\in p^{-1}(U)$. 
Let $\tau_0:U\to V$ denote the homeomorphism such that $p\circ\tau=\tau_0\circ p$.

We denote by $o_f(c)$ the order of $f\circ c$ at 0 for $c\in C$, so that we define a map $o_f:C\to\N\cup\{\infty\}$.
Set $C_a=p^{-1}(a)$, for $a\in U$, and consider the family $\{C_a\cap o^{-1}_f(i):i\in\N\}$. 
We stratify $U$ by the groups of the family. 
Set 
$$
X_1=\{a\in U:o_f=0\ \text{on}\ C_a\}$$
Then
$$X_1=\{(u,v,w,x)\in U:u\not=0\ \text{or}
\ v\not=0\}. $$
Set also
$$X_2=\{u=v=0,w\not=0,w\not= x\},$$
$$X_3=\{u=v=w=0,x\not= 0\}\cup\{u=v=0,w=x\not=0\},$$
$$X_4=\{u=v=w=x=0\}.$$

Then $\{X_1,X_2,X_3,X_4\}$ is a stratification of $U$. Moreover if
$a$ belongs to $X_2$, then $(C_a,C_a\cap o_f^{-1}(6))$ has the same homotopy groups as $([-1,\,1]^2,\partial([-1,\,1]^2))$ because 
$$
C_a\cap o_f^{-1}(6)=\{c=(c_1,..,c_4):[0,\,\epsilon)\to\R^4\in C_a:\frac{d c_1}{d t}(0)\not=0\ 
\text{or}\ \frac{d c_2}{d t}(0)\not=0\}.$$
Similarly if $a$ belongs to $X_3$ or $X_4$, then $(C_a,C_a\cap
o_f^{-1}(6))$ has the same homotopy groups as $([-1,\,1]^2,(\partial[-1,\,1])^2)$ because

$$C_a\cap o_f^{-1}(6)=\{\frac{d c_1}{d t}(0)\not=0,\ \frac{d c_2}{d t}(0)\not=0\} .$$

Now we consider $g$. 
Define $o_g$ in the same way and set 
$$Y_1=\{a\in V:o_g=0\ \text{on}\ C_a\}=\{(u,v,w,x)\in V:u\not=0\
\text{or}\ v\not=0\},$$
$$Y_2\!=\!\{u=v=0,w\not=0\},$$
$$Y_3\!=\!\{u=v=w=0,x\not=0\},$$
$$Y_4\!=\!\{u=v=w=x=0\}.$$
If $a\in Y_1,\,Y_2,\,Y_3$ or $Y_4$, then $(C_a,C_a\cap o_g^{-1}(6))$ has the same homotopy groups as $([-1,\,1]^2,\emptyset)$, $([-1,\,1]^2,\partial([-1,\,1]^2))$, $([-1,\,1]^2,\{6\ \text{points in }\partial([-1,\,1]^2)\})$ or $([-1,\,1]^2,(\partial[-1,\,1])^2)$ respectively. 
We omit its proof except for $Y_3$ because the other case is treated in the same way as above. 
Let $a\in Y_3$. 
Then 
$$
C_a\cap o_g^{-1}(6)=\{\frac{d c_1}{d t}(0)\not=0,\ \frac{d c_2}{d t}(0)\not=0,\ x\frac{d c_1}{d t}(0)\not=\frac{d c_2}{d t}(0)\}.
$$
Hence $(C_a,C_a\cap o_g^{-1}(6))$ has the same homotopy groups as $([-1,\,1]^2,\{6\ \text{points in}\linebreak\partial([-1,\,1]^2)\})$. 
Note in any case $\frac{d c_1}{d t}(0)\not=0$ or $\frac{d c_2}{d t}(0)\not=0$ for $c=(c_1,..,c_4)\in C_a\cap o^{-1}_g(6)$. 
Thus $\{Y_1,Y_2,Y_3,Y_4\}$ is a stratification of $V$ by the homotopy groups of
$C_a\cap o_g^{-1}(6)$.

On the other hand, $o_f(c)$ is equal to $o_g(\tau(c))$ for any $c\in C$ with $p(c)\in U$ and $o_f(c)<\infty$, since $f$ and $g$ are blow-analytically equivalent. 
Hence for $a\in U$, the maps $o_f:\{c\in C_a:o_f(c)<\infty\}\to\N$ and $o_g:\{c\in C_{\tau_0(a)}:o_g(c)<\infty\}\to\N$ are $C^0$ right equivalent. 
Moreover, the restriction of $\tau$ to $\{c\in C_a:o_f(c)=6\}$ is a homeomorphism onto $\{c\in C_{\tau_0(a)}:o_g(c)=6\}$. 
For that it suffices to prove the following statement. 

Let $M\overset{\pi_1}\to M_1\to\cdots\overset{\pi_k}\to M_k=\R^4$ be a sequence of blowings-up along smooth analytic centers such that the images of centers in $\R^4$ are included in $\{u=v=0\}$. 
Set $\pi=\pi_k\circ\cdots\circ\pi_1$. 
Then the following maps are homeomorphisms: 
$$
\pi_*^{-1}\{c\in C_a:o_f(c)=6\}\overset{\pi_{1*}}\longrightarrow\cdots\overset{\pi_{k*}}\longrightarrow\{c\in C_a:o_f(c)=6\},
$$
$$
\pi_*^{-1}\{c\in C_{\tau_0(a)}:o_g(c)=6\}\overset{\pi_{1*}}\longrightarrow\cdots\overset{\pi_{k*}}\longrightarrow\{c\in C_{\tau_0(a)}:o_g(c)=6\}.
$$

Consider only the maps in the former sequence. 
They are clearly continuous and bijective. 
Hence we show the inverse maps are continuous. 
Let $c=(c_1,..,c_4)\in C_a$ with $o_f(c)=6$. 
Then $\frac{d c_1}{d t}(0)\not=0$ or $\frac{d c_2}{d t}(0)\not=0$ by the above arguments. 
Namely $c$ is transverse to $\{u=v=0\}$ and hence to the center of $\pi_k$. 
Hence the inverse map of the map $\pi_{k*}^{-1}\{c\in C_a:o_f(c)=6\}\to\{c\in C_a:o_f(c)=6\}$ is continuous by the definition of a blowing-up. 
It follows also that for that $c$, $\pi_{k*}^{-1}(c)$ is transverse to each irreducible component of $\pi_k^{-1}\{u=v=0\}$. 
Thus repeating the same arguments we see the inverse maps are continuous. 

Therefore, $\{c\in C_a:o_f(c)=6\}$ and $\{c\in C_{\tau_0(a)}:o_g(c)=6\}$ have the same homotopy groups. 
Hence 
$$
\tau_0(X_1)=Y_1,\ \tau_0(X_2)=Y_2,\ \tau_0(X_3\cup X_4)=Y_4, 
$$
which contradicts the assumption that $\tau_0$ is a homeomorphism.
\end{proof}

\begin{rmk}
We can prove moreover that $f$ and $g$ are almost Blow-Nash
equivalent. Actually, similarly to the preceding proof, we can find compositions of finite sequences of blowings-up of 
$\R^4$ along smooth Nash centers $\pi_f:N\to\R^4$ and $\pi_g:L\to\R^4$
so that $f\circ\pi_f$ and 
$g\circ\pi_g$ are semi-algebraically $C^m$ right equivalent for any $m\in\N$ by using a partition of 
unity of class semi-algebraic $C^m$, \S\,II.2, \cite{Shiota}. We may
conclude the statement by theorem 3.1,(3) in \cite{FS}.
\end{rmk}

We have finally proved:

\begin{prop} Almost Blow-analytically equivalent function germs are not 
necessarily blow-analytically equivalent.
\end{prop}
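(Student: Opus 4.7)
The plan is immediate given the two preceding lemmas: the explicit pair $f = \phi\psi\xi$ and $g = \phi\psi\eta$ in four variables already witnesses the required separation. The first lemma gives that $f$ and $g$ are almost Blow-analytically equivalent, while the second gives that their germs at the origin are not blow-analytically equivalent, so nothing else is needed beyond pointing to this pair.

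If I were designing the example from scratch, I would want a pair of functions sharing enough singular structure to become equivalent after one blow-up of a common smooth centre, but whose finer asymmetry is visible to an invariant that respects the base homeomorphism required of a genuine blow-analytic equivalence. The common factor $\phi\psi$ vanishing on the two-plane $X = \{u=v=0\}$ plays the first role: blowing up $X$ simultaneously simplifies $f$ and $g$, and the residual normal-crossing configurations $M_1 \cup M_2$ and $M_1 \cup M_3$ can be swapped by an explicit $(t,w)$-plane rotation in a single chart, producing almost Blow-analytic equivalence after two more rounds of blowings-up and an appeal to Theorem~3.1 of \cite{FS}.

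The main obstacle I would expect, and which is addressed in the second lemma, is fabricating an invariant sensitive enough to separate the germs at the origin. My approach would be to stratify a neighborhood of $0$ by the homotopy type of the pair $(C_a, C_a \cap o_f^{-1}(6))$, where $C_a$ denotes the space of analytic curve germs at $a$ and $o_f$ is the vanishing order of $f \circ c$; the delicate step is verifying that this invariant is transported by a blow-analytic equivalence, which reduces to showing that pullback of analytic curves is a homeomorphism on the order-$6$ locus via transversality to the successive centres of blowing-up. Once this is in hand, the explicit strata for $f$ and $g$ turn out to differ in how the deepest stratum decomposes into pieces with distinct homotopy types, contradicting the existence of a homeomorphism on the base and hence ruling out blow-analytic equivalence.
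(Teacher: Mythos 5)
Your proposal is correct and follows exactly the paper's route: the proposition is a direct consequence of the two preceding lemmas applied to the explicit pair $f=\phi\psi\xi$, $g=\phi\psi\eta$, which is all the paper itself does. Your sketch of how the two lemmas work (blow-up along $X=\{u=v=0\}$, swapping $M_1\cup M_2$ with $M_1\cup M_3$, and the curve-germ invariant stratifying by the homotopy type of $(C_a, C_a\cap o_f^{-1}(6))$) matches the paper's arguments.
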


%%%%%%%%%%%%%%%%%%%%%%%%%%%%%%%%%%%%%%%%%%%%%%%%%%%%%%%%%%%%%%%%%%%%%%%%%%%%%%%%%%%%%%%%%%%%%%%%%%%
\subsection{Normal crossings and cardinality}

An analytic function with {\it only normal crossing singularities} at a point $x$ of a manifold is 
a function whose germ at $x$ is of the form $$\pm x^\alpha
(=\pm\prod_{i=1}^n x_i^{\alpha_i})\,+\const, \text{~~~}
\alpha=(\alpha_1,...,\alpha_n)\not=0\in\N^n$$ 
for some local analytic coordinate system $(x_1,...,x_n)$ at 
$x$. 
If the function has only normal crossing singularities everywhere, we say the function has {\it only normal 
crossing singularities}. 
By Hironaka Desingularization Theorem, any analytic function becomes one with only normal 
crossing singularities after a finite sequence of blowings-up along
smooth centers in the local case or in the compact case.  
An analytic subset of an analytic manifold is called {\it normal crossing} if it is the zero set of an 
analytic function with only normal crossing singularities. 
This analytic function is called {\it defined by} the analytic set. 
It is not unique. 
However, the sheaf of $\mathcal O$-ideals {\it defined by} the analytic set is naturally defined and unique. 
We can naturally stratify a normal crossing analytic subset $X$ into analytic manifolds $X_i$ of dimension 
$i$. 
We call $\{X_i\}$ the {\it canonical stratification} of $X$.

The difference between almost Blow-analytic equivalence and almost 
Blow-analytic R-L equivalence is tiny.

\begin{prop} If the germs of $f$ and $g$ on a compact connected component of $\Sing f$ in $M$ are almost 
Blow-analytically R-L equivalent then the germs of $f$ and $g+\const$ or of $f$ and $-g+\const$ are 
almost Blow-analytically equivalent.
\end{prop}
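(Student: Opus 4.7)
The plan is to absorb the extra target diffeomorphism $\tau$ appearing in the R-L equivalence into a shift of constants together with a possible sign flip, which is precisely what distinguishes almost Blow-analytic equivalence from its R-L version. Let $K$ be the compact connected component of $\Sing f$ appearing in the hypothesis; since $df\equiv 0$ on $K$ and $K$ is connected, $f$ is constant on $K$, say $f|_K=a$. Evaluating the relation $\tau\circ f\circ\pi_f=g\circ\pi_g\circ h$ on $\pi_f^{-1}(K)$ gives $g\circ\pi_g\circ h\equiv\tau(a)=:b$ on $\pi_f^{-1}(K)$, so $g\equiv b$ on the compact set $\pi_g(h(\pi_f^{-1}(K)))$. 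Defining $\tilde\tau(t)=\tau(t+a)-b$, an analytic diffeomorphism of $\R$ fixing $0$, the R-L relation becomes $\tilde\tau\circ(f-a)\circ\pi_f=(g-b)\circ\pi_g\circ h$. Showing that $f-a$ and $\pm(g-b)$ are almost Blow-analytically equivalent then yields, by restoring constants, that $f$ is almost Blow-analytically equivalent to $g+(a-b)$ or to $-g+(a+b)$.

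Include a Hironaka desingularization step in $\pi_f$ and $\pi_g$ so that both $(f-a)\circ\pi_f$ and $(g-b)\circ\pi_g$ have only normal crossing singularities along $\pi_f^{-1}(K)$ and its image under $\pi_g\circ h$. At a point $p\in\pi_f^{-1}(K)$, choose local analytic coordinates $(x_1,\dots,x_n)$ in which $(f-a)\circ\pi_f=\epsilon_p\,x^\alpha$ with $\epsilon_p=\pm 1$ and $\alpha\in\N^n$ nonzero. A direct computation from the Taylor expansion of $\tilde\tau$ at $0$ gives
$$\tilde\tau(\epsilon_p x^\alpha)=\epsilon_p\tilde\tau'(0)\,x^\alpha\,U(x^\alpha),$$
where $U(t)=\tilde\tau(\epsilon_p t)/(\epsilon_p\tilde\tau'(0)\,t)$ extends analytically through $0$ with $U(0)=1$. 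Assuming first $\tilde\tau'(0)>0$, the composite $U(x^\alpha)$ is positive analytic near $x=0$, hence admits an analytic $|\alpha|$-th root $V$; then setting $y_i=x_iV$, followed by a rescaling of one $y_i$ (for some $i$ with $\alpha_i>0$) to absorb the positive constant $\tilde\tau'(0)$, provides local analytic coordinates in which $\tilde\tau\circ(f-a)\circ\pi_f=\epsilon_p\,y^\alpha$. Thus $\tilde\tau\circ(f-a)\circ\pi_f$ has the same zero divisor and the same pointwise sign and multi-index data as $(f-a)\circ\pi_f$.

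By Theorem 3.1,(1) in \cite{FS}, two analytic functions with only normal crossing singularities that share the same zero divisor and matching sign and multi-index data along each component are $C^\omega$ right equivalent. Applied to $(f-a)\circ\pi_f$ and $\tilde\tau\circ(f-a)\circ\pi_f$, this yields an analytic diffeomorphism $\phi:N\to N$ with $(f-a)\circ\pi_f=\tilde\tau\circ(f-a)\circ\pi_f\circ\phi$. Substituting the R-L relation gives $(f-a)\circ\pi_f=(g-b)\circ\pi_g\circ(h\circ\phi)$, so setting $h'=h\circ\phi$ produces the almost Blow-analytic equivalence between $f$ and $g+(a-b)$. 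If instead $\tilde\tau'(0)<0$, the same argument applied to $-\tilde\tau\circ(f-a)\circ\pi_f=-(g-b)\circ\pi_g\circ h$, in which $-\tilde\tau$ has positive derivative at $0$, exhibits $f$ as almost Blow-analytically equivalent to $-g+(a+b)$.

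The main obstacle is the local identification of the normal crossing structure of $\tilde\tau\circ(f-a)\circ\pi_f$ with that of $(f-a)\circ\pi_f$, which hinges on extracting an analytic $|\alpha|$-th root of a positive analytic function and then performing the coordinate change $y_i=x_iV$. Once this pointwise matching of sign and multi-index is established, the passage to a global right equivalence is handed off to Theorem 3.1,(1) in \cite{FS}, which avoids any delicate patching of the locally constructed coordinate changes across normal crossing strata.
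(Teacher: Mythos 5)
Your proof follows the same overall strategy as the paper: reduce via Hironaka to the normal crossing case, and then invoke a result from \cite{FS}. The local computation — absorbing the constants $a,b$ so that the target diffeomorphism fixes $0$, and then absorbing the unit factor $U(x^\alpha)^{1/|\alpha|}$ by the local coordinate change $y_i = x_i V$ — is correct and makes explicit exactly what the paper leaves implicit. The constant-chasing at the end, producing $g+(a-b)$ or $-g+(a+b)$, is also right.

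The one point you should scrutinize is the citation of Theorem 3.1,(1) of \cite{FS}. You paraphrase it as ``two analytic functions with only normal crossing singularities that share the same zero divisor and matching sign and multi-index data along each component are $C^\omega$ right equivalent.'' But elsewhere in this paper (the example in \S\ref{sect2}), Theorem 3.1,(1) is invoked only after a global $C^\infty$ diffeomorphism $\tilde h$ realizing the right equivalence has already been constructed; its apparent role is to upgrade a $C^\infty$ right equivalence of normal crossing functions to a $C^\omega$ one, not to build a right equivalence from pointwise sign and multiplicity data alone. Your local argument produces, at each point, a local analytic change of coordinates — it does not yet produce a global $C^\infty$ diffeomorphism, and this patching step is precisely the nontrivial local-to-global content. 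The paper's own proof of the proposition cites instead \emph{the remark following lemma 4.7} together with \emph{proposition 4.8,(i)} in \cite{FS}, and the usage of those two results later in the paper (in the proof of theorem~\ref{main}) shows that their job is exactly this globalization: lemma 4.7 (or its remark) produces a diffeomorphism close to the identity which makes the difference $r$-flat along the singular set, and proposition 4.8,(i) then upgrades $r$-flatness to analytic right equivalence. So either replace the appeal to Theorem 3.1,(1) by those results, or first construct a global $C^\infty$ right equivalence between $(f-a)\circ\pi_f$ and $\tilde\tau\circ(f-a)\circ\pi_f$ (for instance by patching your local coordinate changes with a partition of unity, using that the unit factor is positive) before calling on Theorem 3.1,(1). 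Apart from this citation/globalization issue, the proof is sound and, in fact, usefully unpacks what the paper's one-line reference conceals.
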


\begin{proof} We can reduce the problem to the case where $f\circ\pi_f$ and $g\circ\pi_g$ have only
normal crossing singularities by Hironaka Desingularization
Theorem. Then the result follows from the remark following lemma 4.7 and proposition 4.8,(i) in \cite{FS}.
\end{proof}

Let $P$ denote the set of homogeneous polynomial functions on $\R^2$ of degree 4. 
It is easy to see that the analytic R-L equivalence classes of $P$ has the cardinality of the continuum, and the Blow-analytic R-L equivalence classes of $P$ is 
finite. Moreover almost Blow-analytic R-L equivalence on analytic functions on $\R^2$ coincides with Blow-analytic R-L equivalence and is an equivalence relation.

For general dimension, we do not know whether almost Blow-analytic 
(Blow-Nash) R-L equivalence is an equivalence relation, as noticed
before. Therefore we will say that analytic (Nash) functions $f$ and $g$ lie in the {\it same class} if there exists a sequence 
of analytic functions $f_0,...,f_k$ such that $f_0=f,\,f_k=g$ and $f_i$ and $f_{i+1}$ are almost 
Blow-analytically (Blow-Nash) R-L equivalent, $i=0,...,k-1$.

In that setting, we are able to determine the cardinality of the
classes under these relations.

\begin{thm}\label{main} Let $M$ be a compact analytic (resp. Nash) manifold of strictly positive dimension. 
Then the cardinality of the set of classes of analytic (resp. Nash) functions on $M$, classified by 
almost Blow-analytic (resp. Blow-Nash) R-L equivalence, is
countable. Moreover, in the Nash case, we do not need to assume that $M$ is compact. 
\end{thm}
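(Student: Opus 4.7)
The plan is to use Hironaka Desingularization Theorem (and its Nash analogue which, by the main Theorem~\ref{Nresol} of the paper, can be approximated by blowings-up along smooth Nash centers) to show that every almost Blow-analytic (resp. Blow-Nash) R-L equivalence class contains a representative whose singularities are only normal crossings, and then to appeal to the cardinality result of \cite{FS} for analytic (resp. Nash) R-L equivalence of normal crossing functions on a compact manifold. Since this latter result is stated in \cite{FS} only in the compact case, the non-compact Nash statement requires a separate argument using semi-algebraic finiteness.

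First I would argue the reduction to normal crossing representatives. Let $f : M \to \R$ be analytic. By Hironaka's theorem there is a finite sequence of blowings-up along smooth analytic centers $\pi_f : N \to M$ such that $f \circ \pi_f$ has only normal crossing singularities; in the Nash case, by Theorem~\ref{Nresol} the same can be arranged with smooth Nash centers on a Nash manifold. Using the trivial modification $\pi_g = \id_M$ on the other side of Definition~\ref{def1} (or the obvious analogue for almost Blow-Nash R-L equivalence), we see that $f$ lies in the same class as any analytic (resp. Nash) function that is analytically (resp. Nash) R-L equivalent to $f \circ \pi_f$. Thus the canonical map from analytic (resp. Nash) R-L equivalence classes of normal crossing functions on compact analytic (resp. Nash) manifolds over $M$ (allowing arbitrary compositions of admissible blowings-up) to almost Blow-analytic (resp. Blow-Nash) R-L classes of functions on $M$ is surjective.

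Next I would apply the cardinality result for normal crossing functions from \cite{FS}: on a fixed compact analytic (resp. Nash) manifold, the set of analytic (resp. Nash) R-L equivalence classes of functions with only normal crossing singularities is countable. The key point here is that such classes are determined by a finite combinatorial datum attached to the canonical stratification of the normal crossing set (multiplicities $\alpha = (\alpha_1,\dots,\alpha_n)$ on each stratum, together with sign data and gluing combinatorics), and there are countably many such data. Since the source of admissible blowing-up sequences up to isomorphism over $M$ is itself countable (each such sequence is encoded by a finite tower of coherent ideal sheaves generated by finitely many blowing-up data, and by taking a desingularization that puts the centers into normal crossing with the exceptional divisor one can enumerate the possibilities), the total number of equivalence classes obtained this way is at most a countable union of countable sets, hence countable.

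The main obstacle is the non-compactness in the Nash case. To handle it, I would use the fact that a Nash manifold admits a semi-algebraic exhaustion by compact semi-algebraic subsets and, more importantly, that the R-L equivalence class of a Nash function on a Nash manifold is governed by a finite semi-algebraic invariant. Concretely, one may compactify: any Nash manifold $M$ is a union of connected components of a real algebraic set, and so embeds semi-algebraically in a compact Nash manifold $\overline{M}$ (e.g. via the one-point compactification sphere embedding used systematically in \cite{Shiota}); a Nash function $f$ on $M$ extends, after composition with a suitable rational birational modification arranged to meet the admissibility conditions on the centers, to a Nash function with normal crossing singularities on a compact Nash manifold, and two such extensions become almost Blow-Nash R-L equivalent over $M$ whenever their compactified normal crossing models are Nash R-L equivalent. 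Countability then follows from the compact case already handled. This compactification step, together with the verification that the admissibility conditions on centers (codimension $>1$, normal crossing with the exceptional divisor, images in $\Sing f$) are preserved, is the delicate part of the argument.
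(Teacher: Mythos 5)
Your high-level strategy agrees with the paper's: resolve singularities by a finite sequence of blowings-up to obtain a normal-crossing representative, and then invoke the cardinality result of \cite{FS} for analytic (resp. Nash) R-L equivalence classes of normal crossing functions. The paper's proof is in fact only a few lines: Hironaka (or, in the Nash case, \cite{Hi,BM}) gives the reduction to normal crossings, and \cite{FS} is then quoted \emph{directly}, including the non-compact Nash version of the cardinality bound. That is all the authors use.

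Two points in your write-up are problematic. First, the assertion that ``the source of admissible blowing-up sequences up to isomorphism over $M$ is itself countable'' is false: blowing up $M$ at two distinct points already yields two blowings-up that are not isomorphic \emph{over} $M$ (an isomorphism over $M$ would be the identity on $M$), so there is a continuum of such sequences. This step is also not the right thing to count; what one actually needs (if one insists on keeping track of the total spaces) is that only countably many diffeomorphism types of compact analytic/Nash manifolds arise, or more simply that the bound in \cite{FS} for normal crossing functions is uniform in the manifold. Second, the compactification argument you propose for the non-compact Nash case is both unnecessary — the paper explicitly states that \cite{FS} already proves the countability of Nash R-L classes of normal crossing functions on a \emph{non-compact} Nash manifold — and dubious on its own terms: a Nash function on a non-compact Nash manifold typically does \emph{not} extend to any Nash compactification (already $f(x)=x$ on $\R$ does not extend to $S^1$), and your proposed fix by ``composition with a suitable rational birational modification'' is not substantiated; in dimension one there is no room to modify, and in higher dimension the claim would itself require proof. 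Dropping that step and citing \cite{FS}'s non-compact Nash statement closes the gap.
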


We proved in \cite{FS} that the cardinality of analytic R-L
equivalence classes of analytic functions on $M$ with only normal
crossing singularities is 0 or countable. In particular, it can not be
0 in theorem \ref{main} since we can produce non-trivial examples. We
proved moreover in \cite{FS} that in
the Nash case, the result holds true even if $M$ is no longer
compact, and in that case the cardinality is always
countable. Combined with Hironaka Desingularization Theorem, we obtain
a proof for theorem \ref{main}.

%%%%%%%%%%%%%%%%%%%%%%%%%%%%%%%%%%%%%%%%%%%%%%%%%%%%%%%%%%%%%%%%%%%%%%%%%%%%%%%%%%%%%%%%%%%%%%%%%
%%%%%%%%%%%%%%%%%%%%%%%%%%%%%%%%%%%%%%%%%%%%%%%%%%%%%%%%%%%%%%%%%%%%%%%%%%%%%%%%%%%%%%%%%%%%%%%%%
%%%%%%%%%%%%%%%%%%%%%%%%%%%%%%%%%%%%%%%%%%%%%%%%%%%%%%%%%%%%%%%%%%%%%%%%%%%%%%%%%%%%%%%%%%%%%%%%%

\subsection{Nash Approximation Theorems}
The approximation of analytic solutions of a system of Nash equations
is a crucial tool in the study of real analytic and Nash functions on
Nash manifolds. A local version is given by the classical Artin
approximation theorem. The Nash
Approximation Theorem of \cite{CRS} states a global version on a
compact Nash manifold.

The following result is a natural counterpart, for almost
Blow-analytic equivalence, of the Nash
Approximation Theorem of \cite{CRS}. The remaining part of the paper will be devoted
to its proof.

\begin{thm}\label{main}
Let $M$ be a Nash manifold, $X\subset M$ be a compact semialgebraic
subset and $f,g$ Nash function germs on $X$ in $M$ such that $X=M$ or $X\subset
\Sing f$. 
If $f$ and $g$ are almost Blow-analytically (R-L) equivalent, then $f$ and $g$ are almost Blow-Nash 
(respectively R-L) equivalent.
\end{thm}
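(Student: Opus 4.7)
The plan is to bridge the analytic and Nash categories via two successive approximations: first replace the analytic desingularizations by Nash ones preserving the normal crossing property, then replace the connecting analytic diffeomorphism by a Nash one.

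Starting from data realizing the almost Blow-analytic equivalence of $f$ and $g$, namely analytic modifications $\pi_f : N \to U$ and $\pi_g : L \to V$ on open semialgebraic neighborhoods of $X$ together with an analytic diffeomorphism $h$ between open neighborhoods of $\pi_f^{-1}(X)$ and $\pi_g^{-1}(X)$ satisfying $f \circ \pi_f = g \circ \pi_g \circ h$, I would first apply Hironaka's Desingularization Theorem further on $N$ and $L$. Using the transitivity argument of Proposition \ref{eqrel} to glue the refinements compatibly, I may assume that $f \circ \pi_f$ and $g \circ \pi_g$ have only normal crossing singularities on neighborhoods of $\pi_f^{-1}(X)$ and $\pi_g^{-1}(X)$, with $h$ adjusted accordingly.

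I would then apply Theorem \ref{Nresol} to each of the analytic sequences comprising $\pi_f$ and $\pi_g$, producing Nash compositions of blowings-up $\pi_f' : N' \to U$ and $\pi_g' : L' \to V$ close to $\pi_f$ and $\pi_g$, for which $f \circ \pi_f'$ and $g \circ \pi_g'$ still have only normal crossing singularities. The perturbation Lemma \ref{pert}, based on the Euclidean realization developed in Section \ref{sect-eu}, provides analytic diffeomorphisms $\sigma_f : N' \to N$ and $\sigma_g : L' \to L$ on semialgebraic neighborhoods of the exceptional loci, commuting with the projections to $U, V$ in the sense that $f \circ \pi_f' = f \circ \pi_f \circ \sigma_f$ and $g \circ \pi_g' = g \circ \pi_g \circ \sigma_g$. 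Setting $h' = \sigma_g^{-1} \circ h \circ \sigma_f$ yields an analytic diffeomorphism between semialgebraic neighborhoods of $(\pi_f')^{-1}(X)$ and $(\pi_g')^{-1}(X)$ with $f \circ \pi_f' = g \circ \pi_g' \circ h'$.

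At this point $N'$ and $L'$ are Nash manifolds, $(\pi_f')^{-1}(X)$ is compact semialgebraic in $N'$, and the Nash functions $f \circ \pi_f'$ and $g \circ \pi_g'$ are analytically equivalent via $h'$. By Theorem \ref{thmN} there exists a Nash diffeomorphism $h^N$ between semialgebraic neighborhoods of $(\pi_f')^{-1}(X)$ and $(\pi_g')^{-1}(X)$ satisfying $f \circ \pi_f' = g \circ \pi_g' \circ h^N$, which realizes the desired almost Blow-Nash equivalence. In the R-L case, the analytic diffeomorphism $\tau$ of $\R$ is approximated by a Nash diffeomorphism of $\R$ using Proposition \ref{prop-app}, and both approximations are carried out compatibly. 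The main obstacle is the second stage: the Nash perturbation of each analytic center must be performed inductively along the sequence of blowings-up so that at every level the perturbed center retains normal crossings with the accumulated exceptional divisor, and the resulting analytic diffeomorphisms $\sigma_f$ and $\sigma_g$ must be controlled uniformly on open semialgebraic neighborhoods of the compact preimages of $X$. Theorem \ref{Nresol} combined with Lemma \ref{pert} is designed precisely for this purpose, and the compactness of $X$ together with the hypothesis $X = M$ or $X \subset \Sing f$ ensures that the entire Blow-analytic equivalence can be transported uniformly to the Nash setting.
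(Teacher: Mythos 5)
Your overall strategy—reduce to normal crossings by Hironaka, approximate the analytic desingularizations by Nash ones via Theorem \ref{Nresol}, then approximate the analytic diffeomorphism by a Nash one via Theorem \ref{thmN}—is exactly the paper's approach, and it works essentially as you describe for the case $X=M$ with $M$ compact.

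However, for the case $X\subset\Sing f$ (the germ case with $X$ a proper compact subset), there is a genuine gap in your argument. When you set $h'=\sigma_g^{-1}\circ h\circ\sigma_f$, there is no reason why $h'$ should carry $(\pi'_f)^{-1}(X)$ onto $(\pi'_g)^{-1}(X)$. Theorem \ref{Nresol} only gives an analytic \emph{embedding} $\psi=\sigma_f:N'\to N$ with $\psi(\cdot)\subset\pi_f^{-1}(X)$ (an inclusion, not an equality), and the resulting $h'$ is merely close to $h$ near the fibers of $X$; it need not be $X$-preserving. But Theorem \ref{thmN} requires analytic right equivalence \emph{as germs on $X$}, i.e.\ the diffeomorphism must carry the germ of one compact set to the other. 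The paper makes this explicit---``we cannot expect the equality $\tilde h(\tilde\pi_f^{-1}(X))=\tilde\pi_g^{-1}(X)$''---and resolves it only by introducing the canonical Nash germ decomposition $\{X_i\}$ of $X$, performing further blowings-up so that the $X_{f,i}=\pi_f^{-1}(X_i)$ become normal crossing and $\pi_f^{-1}(X)$ becomes a union of connected components of strata of the canonical stratification of $\Sing(f\circ\pi_f)$ (property $(*)$), and then combining $(*)$ with the closeness of $\sigma_f,\sigma_g$ to $\mathrm{id}$ to deduce $\sigma_f(\tilde\pi_f^{-1}(X))=\pi_f^{-1}(X)$. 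Moreover, the two compact sets $\tilde\pi_f^{-1}(X)\subset\tilde N$ and $\tilde\pi_g^{-1}(X)\subset\tilde L$ live in different manifolds, so Theorem \ref{thmN} is not directly applicable; the paper first invokes Remark \ref{iii} to Nash-identify the germs of $\tilde N$ and $\tilde L$ on the two preimages.

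There is a second, smaller gap in the R-L case. Approximating $\tau$ by a Nash diffeomorphism of $\R$ via Proposition \ref{prop-app} and ``carrying out both approximations compatibly'' only produces approximate, not exact, equality. The paper instead reduces to showing that two analytically R-L equivalent Nash functions $\phi,\psi$ with only normal crossing singularities are Nash R-L equivalent, and achieves exactness by choosing the Nash approximations $\pi_0,\tau_0$ to respect $\Sing\phi$, $\phi(\Sing\phi)$ and $\phi^{-1}(\phi(\Sing\phi))$ (Remark \ref{iii}), reducing to the $l$-flat situation, and invoking Lemma 4.7 and Proposition 4.8,(i) of \cite{FS}. A bare application of Proposition \ref{prop-app} to $\tau$ does not give this.
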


\begin{rmk}\begin{flushleft}\end{flushleft}
\begin{enumerate}
\item Here the compactness assumption on $X$ is necessary. 
Indeed, there exist a non-compact Nash manifold $M$ and Nash functions $f$ and $g$ on $M$ which are 
$C^\omega$ right equivalent but not almost Blow-Nash equivalent as follows. 
Let $N$ be a compact contractible Nash manifold with non-simply connected boundary of 
dimension $n>3$ (e.g., see \cite{Mz}). 
Set $M=(\Int N)\times(0,\,1)$ and let $f:M\to(0,\,1)$ denote the projection. 
Then $M$ and $f$ are of class Nash, and $M$ is Nash diffeomorphic to $\R^{n+1}$ for the 
following reason. 
Smooth the corners of $N\times[0,\,1]$. 
Then $N\times[0,\,1]$ is a compact contractible Nash manifold with simply connected boundary 
of dimension$\,>4$. 
Hence by the positive answers to Poincar\'e conjecture and Sch\"onflies problem (Brown-Mazur 
Theorem) $N\times[0,\,1]$ is $C^\infty$ diffeomorphic to an $(n+1)$-ball. 
Hence by Theorem VI.2.2, \cite{Shiota} $M$ is Nash diffeomorphic to an open $(n+1)$-ball. 
Let $g:M\to\R$ be a Nash function which is Nash right equivalent to the projection $\R^n
\times(0,\,1)\to(0,\,1)$. 
Then $f$ and $g$ are $C^\omega$ right equivalent since $\Int N$ is $C^\omega$ diffeomorphic 
to $\R^n$, but they are not almost Blow-Nash equivalent because if they were so then their 
levels would be Nash diffeomorphic except for a finite number of values and hence $\Int N$ and 
$\R^n$ are Nash diffeomorphic, which contradicts Theorem VI 2.2, \cite{Shiota}. 
\item The similar result concerning Blow-Nash equivalence remains open. Namely we do not know whether Blow-analytically equivalent Nash function germs on $X$ in $M$ are Blow-Nash equivalent.
\end{enumerate}
\end{rmk}

%%%%%%%%%%%%%%%%%%%%%%%%%%%%%%%%%%%%%%%%%%%%%%%%%%%%%%%%%%%%%%%%%%%%%%%%%%%%%%%%%%%%%%%%%%%%%%%%%%%
%%%%%%%%%%%%%%%%%%%%%%%%%%%%%%%%%%%%%%%%%%%%%%%%%%%%%%%%%%%%%%%%%%%%%%%%%%%%%%%%%%%%%%%%%%%%%%%%%%%
%%%%%%%%%%%%%%%%%%%%%%%%%%%%%%%%%%%%%%%%%%%%%%%%%%%%%%%%%%%%%%%%%%%%%%%%%%%%%%%%%%%%%%%%%%%%%%%%%%%

\section{Nash approximation of an analytic desingularization}

\subsection{Preliminaries on real analytic sheaf theory}

We recall the statements of the real analytic case of Cartan Theorems A and B, and Oka Theorem, in the refined version given in \cite{FS}.

Let $\mathcal O$, $\mathcal N$ and $N(M)$ denote, respectively, the sheaves of analytic and Nash 
function germs on an analytic and Nash manifold and the ring of Nash functions on a Nash manifold 
$M$. 
We write $\mathcal O_M$ and $\mathcal N_M$ when we emphasize the domain $M$. 
For a function $f$ on an analytic (Nash) manifold $M$, a subset $X$ of $M$, a vector field $v$ on $M$ 
and for a sheaf of $\mathcal O$- ($\mathcal N$-) modules $\mathcal M$ on $M$, let $f_x$, $X_x$, $v_x$ and $\mathcal M
_x$ denote the germs of $f$ and $X$ at a point $x$ of $M$, the tangent vector assigned to $x$ by $v$ 
and the stalk of $\mathcal M$ at $x$, respectively. 
For a compact semialgebraic subset $X$ of a Nash manifold $M$, let $\mathcal N(X)$ denote the germs of Nash 
functions on $X$ in $M$ with the topology of the inductive limit space of the topological spaces $N(U)$ 
with the compact-open $C^\infty$ topology where $U$ runs through the family of open semialgebraic neighborhoods 
of $X$ in $M$.

\begin{thm}\label{sheaf} Let $\mathcal M$ be a coherent sheaf of $\mathcal O$-modules on an analytic manifold $M$. 
\begin{enumerate}
\item\label{A} (Cartan Theorem A) For any $x\in M$ we have $\mathcal M_x=H^0(M,\mathcal M)\mathcal O_x.$
\item\label{Abis} Assume moreover that $\mathcal M_x$ is generated by a uniform number of elements for any $x\in M$. 
Then $H^0(M,\mathcal M)$ is finitely generated as an $H^0(M,\mathcal O)$-module. 
\item\label{B} (Cartan Theorem B) $H^1(M,\mathcal M)=0$.
\item\label{Bbis} Let $X\subset M$ be a global analytic set---the zero set of an analytic 
function. 
Let $\mathcal I$ be a coherent sheaf of $\mathcal O$-ideals on $M$ such that any element of $\mathcal I$ vanishes on 
$X$. 
Then any $f\in H^0(M,\mathcal O/\mathcal I)$ can be extended to some $F\in C^\omega(M)$, i.e. $f$ is the image 
of $F$ under the natural map $H^0(M,\mathcal O)\to H^0(M,\mathcal O/\mathcal I)$. 
If $X$ is normal crossing, we 
can choose $\mathcal I$ to be the function germs vanishing on $X$. 
Then $H^0(M,\mathcal O/\mathcal I)$ consists of functions on $X$ whose germs at each point of $X$ are extensible 
to analytic function germs on $M$. 
\item\label{Oka} (Oka Theorem)
Let $\mathcal M_1$ and $\mathcal M_2$ be coherent sheaves of $\mathcal O$-modules on $M$, and 
$h:\mathcal M_1\to\mathcal M_2$ be an $\mathcal O$-homomorphism. 
Then $\Ker h$ is a coherent sheaf of $\mathcal O$-modules. 
\end{enumerate}
\end{thm}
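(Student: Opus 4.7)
The plan is to deduce the five assertions from their classical complex analytic counterparts by passing to a Stein complexification. By Grauert's theorem one realises $M$ as the fixed-point set of an antiholomorphic involution $\sigma$ on a complex analytic manifold $\widetilde M$ of the same dimension, so that $M$ admits a fundamental system $\{U_\nu\}_{\nu\in\N}$ of $\sigma$-invariant Stein open neighborhoods. A coherent $\mathcal O_M$-module $\mathcal M$ is locally the cokernel of a matrix of real analytic functions that complexifies directly; these local models glue via the complex Oka theorem to a coherent $\mathcal O_{U_\nu}$-module $\widetilde{\mathcal M}$ carrying a canonical $\sigma$-equivariant structure whose $\sigma$-invariant global sections restrict to sections of $\mathcal M$ on $M$.

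For (\ref{A}), complex Cartan A produces sections of $\widetilde{\mathcal M}(U_\nu)$ generating $\widetilde{\mathcal M}_x$ at any $x\in M$; replacing each section $s$ by $\tfrac12(s+\sigma^*s)$ and $\tfrac{1}{2i}(s-\sigma^*s)$ yields $\sigma$-invariant sections whose restrictions to $M$ still generate $\mathcal M_x$, by Nakayama's lemma applied to the local ring $\mathcal O_{M,x}$. For (\ref{Abis}), the uniform bound on the number of stalk generators allows a Noetherian descent along the exhaustion $\{U_\nu\}$, yielding finitely many global sections generating $\widetilde{\mathcal M}$ everywhere; $\sigma$-symmetrisation then produces finitely many generators of $H^0(M,\mathcal M)$ over $H^0(M,\mathcal O)$. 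Assertion (\ref{B}) follows from the vanishing $H^1(U_\nu,\widetilde{\mathcal M})=0$ coming from complex Cartan B, together with an averaging trick: a \v{C}ech $1$-cocycle on $M$ is extended to some $U_\nu$, written there as a coboundary, and averaging the corresponding $0$-cochain over $\sigma$ descends it to $M$.

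Part (\ref{Bbis}) is a consequence of the long exact cohomology sequence of $0\to\mathcal I\to\mathcal O\to\mathcal O/\mathcal I\to 0$ combined with (\ref{B}) applied to $\mathcal I$; the explicit description of $H^0(M,\mathcal O/\mathcal I)$ for $X$ normal crossing follows from the canonical local model of normal crossings. Finally (\ref{Oka}) is local: at each $x\in M$ it reduces to the complex Oka theorem applied to the complexified morphism $\widetilde h\colon\widetilde{\mathcal M}_1\to\widetilde{\mathcal M}_2$, and coherence of the kernel descends to the real stalk via faithful flatness of $\mathcal O_{M,x}\to\mathcal O_{\widetilde M,x}$. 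The main obstacle lies in controlling the interaction between complexification and the $\sigma$-equivariant structure throughout: one must check that the local complexifications glue into a globally defined coherent sheaf compatible with $\sigma$, and that the symmetrisation of a generating family near a real point preserves the generation property. Both points hinge on combining Nakayama's lemma on the real side with Oka's theorem on the complex side, and this is precisely where the refinement of \cite{FS} over the standard Cartan--Oka statements becomes essential.
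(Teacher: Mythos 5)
The paper does not actually prove Theorem \ref{sheaf}; it only recalls the statement from \cite{FS}, so there is no in-text proof to compare against. Your complexification strategy---realize $M$ as the fixed locus of an antiholomorphic involution $\sigma$ on a complexification admitting $\sigma$-invariant Stein neighborhoods, extend $\mathcal M$ to a $\sigma$-equivariant coherent sheaf, apply the complex Cartan--Oka theorems, and then symmetrize---is precisely Cartan's classical route (1957) and is almost certainly the one used in \cite{FS}. The $\sigma$-averaging for (\ref{A}) and (\ref{B}), the long exact sequence argument for (\ref{Bbis}), and the faithful-flatness descent for (\ref{Oka}) are all sound; you also rightly flag that extending a coherent $\mathcal O_M$-module to a coherent $\sigma$-equivariant sheaf on a complexification is nontrivial, but it is itself a theorem of Cartan and should be cited rather than re-derived as a gluing exercise.

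The genuine gap is in part (\ref{Abis}). You invoke a ``Noetherian descent along the exhaustion $\{U_\nu\}$,'' but $H^0(M,\mathcal O)$ is not Noetherian when $M$ is non-compact: already in $C^\omega(\R)$ the ideals of functions vanishing on $\{n,n+1,n+2,\ldots\}$ form a strictly increasing chain (e.g.\ $\sin(\pi x)/x\in I(\{1,2,\ldots\})\setminus I(\{0,1,2,\ldots\})$), and the same holds for $H^0(U_\nu,\mathcal O_{\widetilde M})$. So there is no Noetherian stabilization to appeal to, and the argument as written does not produce a finite generating set. What is actually needed here is a Forster--Ramspott type bound: a coherent sheaf on an $n$-dimensional Stein manifold whose stalks require at most $p$ generators is globally generated by a finite, dimension-bounded number of sections. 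Applying this to $\widetilde{\mathcal M}$ over a Stein complexification, symmetrizing with $\sigma$, restricting to $M$, and then feeding the resulting surjection $\mathcal O^N\to\mathcal M$ into your part (\ref{B}) via $0\to\mathcal K\to\mathcal O^N\to\mathcal M\to 0$ yields the finite generation of $H^0(M,\mathcal M)$ over $H^0(M,\mathcal O)$; this is the step your sketch is missing.
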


%%%%%%%%%%%%%%%%%%%%%%%%%%%%%%%%%%%%%%%%%%%%%%%%%%%%%%%%%%%%%%%%%%%%%%%%%%%%%%%%%%%%%%%%%%%%%%%%%%%

\subsection{Euclidean realization of a sequence of blowings-up}\label{sect-eu}

Let $C$ be a smooth analytic subset of an analytic manifold $U$, and let $\pi:M \longrightarrow U$ denote 
the blowing-up of $U$ along center $C$. 
In this section, we describe $M$ as a smooth analytic subset of $U\times\PP(k)$ for some $k\in\N$.

Let $\mathcal I$ denote the sheaf of $\mathcal O$-ideals defined by $C$. 
Since $C$ is smooth, each stalk $\mathcal I_x$ is generated by $c =\codim C$ elements. 
Hence there exist a finite number of global generators $h_0,\ldots,h_k \in H^0(U,\mathcal I)$ of $\mathcal I$ by theorem \ref{sheaf}.(\ref{Abis}). 
Define $\mathcal A$ to be the sheaf of relations of $h_0,\ldots,h_k$:
$$
\mathcal A=\cup_{x\in U}\{(\mu_0,\ldots,\mu_k)\in \mathcal O_x^{k+1}: \sum_{i=0}^k \mu_i h_{ix}=0\}. 
$$
Then $\mathcal A$ is coherent by theorem \ref{sheaf}.(\ref{Oka}), and each $\mathcal A_x$ is generated by $k-c+1+(c-1)!$ elements as 
follows. 
If $x_0\not\in C$ then $h_i(x_0)\not=0$ for some $i$, say 0. 
On a small neighborhood of $x_0$, the map $\mathcal O^k\supset\mathcal O^k_x\ni(\mu_1,....,\mu_k)\to(-\sum_{i=1}^
k\mu_ih_{ix}/h_{0x},\mu_1,...,\mu_k)\in\mathcal O_x^{k+1}\subset\mathcal O^{k+1}$ is an isomorphism onto $\mathcal A$. 
Hence $\mathcal A_x$ is generated by $k$ elements. 
If $x_0\in C$, let $x$ denote a point near $x_0$. 
In this case we can assume that $h_{0x},...,h_{c-1x}$ are regular function germs and generate $\mathcal I_x$. 
Then each $h_{ix},\,c\le i\le k$, is of the form $\sum_{i=0}^{c-1}\phi_ih_{ix}$ for some $\phi_i\in
\mathcal O_x$. 
Hence the projection image of $\mathcal A_x$ to the last $k-c+1$ factors of $\mathcal O_x^{k+1}$ is $\mathcal O_x^{k-c
+1}$, and it suffices to see that $\mathcal A_x\cap\mathcal O_x^{c}\times\{0\}\times\cdots\times\{0\}$ is 
generated by $(c-1)!$ elements. 
We do this as follows: $\mathcal A_x\cap\mathcal O_x^{c}\times\{0\}\times\cdots\times\{0\}$ is generated by $(0,...,0,
\overbrace{h_{j-1x}}^i,0,....,\overbrace{-h_{i-1x}}^j,0,...,0)$ for $1\le i<j\le c$. 
Therefore, $\mathcal A$ is generated by its global cross-sections $g_1=(g_{1,0},...,g_{1,k}),...,g_{k'}=(g_{k'
,0},...,g_{k',k})\in C^\omega(U)^{k+1}$ for some $k'\in\N$. 
Moreover, it follows from these arguments that 
\begin{enumerate}
\item[(1)] $\sum_{j=0}^kg_{i,j}h_j=0,\ i=1,...,k'$, 
\item[(2)] 
for each $x\in U-C$, the vectors $g_1(x),...,g_{k'}(x)$ in $\R^{k+1}$ span a hyperplane and $(h_0(x),...,h_k(x))$ 
in $\R^{k+1}$ is non-zero and orthogonal to the hyperplane,
\item[(3)] for each $x\in C$, the linear 
subspace $\{(s_0,...,s_k)\in\R^{k+1}:\sum_{j=0}^ks_jg_{i,j}(x)=0,\ i=1,...,k'\}$ of $\R^{k+1}$ is of 
dimension $c$. 
\end{enumerate}
Hence we can regard set-theoretically $M-\pi^{-1}(C)$ as 
$$
\{(x,t)\in(U-C)\times\PP(k):t_ih_j(x)=t_jh_i(x),\,i,j=0,\ldots,k\} 
$$
by (2), hence $M$ as
$$
%\multline
\{(x,t)\in U\times\PP(k):t_ih_j(x)=t_jh_i(x),\,i,j=0,\ldots,k,\ \text{and}\\
\sum_{j=0}^kt_jg_{i,j}(x)=0,\,i=1,...,k'\} 
%\endmultline
$$
by (3) and by $\sum_{j=0}^kt_jg_{i,j}(x)=0,\,i=1,...,k'$, for $(x,t)\in(U-C)\times\PP(k)$ with $t_ih
_j(x)=t_jh_i(x),\,i,j=0...,k$, and $\pi$ as the restriction to $M$ of the projection $U\times\PP(k)\to U$. 
When we identify $M$ with the subset of $U\times\PP(k)$, we say $M$ is {\it realized} in $U\times\PP(k)$.

Since we treat only finite sequences of blowings-up, we can embed $M$ into a Euclidean space. 
For that we embed $\PP(k)$ algebraically in $\R^{(k+1)^2}$ as in \cite{BCR} by 
$$
(t_0:\ldots:t_k)\mapsto(\frac{t_it_j}{|t|^2}), 
$$
where $|t|^2=\sum_{i=0}^k t_i^2$. 
It is known that $\PP(k)$ is a non-singular algebraic subvariety in $\R^{(k+1)^2}$. 
We denote by $y_{i,j}$ the coordinates on $\R^{(k+1)^2}$ such that $y_{i,j}=t_it_j/|t|^2$ on $\PP(k)$. 
Let $\xi_1,\ldots,\xi_s$ be generators of the ideal of $\R[y_{i,j}]$ of functions vanishing on $\PP(k)$. 
Set $l_{i,j,m}(x,y)=y_{i,j}h_m(x)-y_{m,i}h_j(x)$ for $ i,j,m=0,\ldots,k$. 
Define
$$
%\multline
N=\{(x,y)\in U\times\R^{(k+1)^2}:l_{i,j,m}(x,y)=0,\ i,j,m=0,\ldots,k,$$
$$\sum_{j=0}^ky_{j,m}g_{i,j}(x)=0,\,i=1,...,k',\,m=0,...,k,\ \text{and}\ \xi_i(y)=0,\,i=1,...,s\}.
%\endmultline
$$
Then $M=N$. Moreover the analytic sets on both sides coincide algebraically, i.e. the functions $l_{i,j,m},\,\sum
_{j=0}^ky_{j,m}g_{i,j},\,\xi_i$ generate $I(M)$---the ideal of $C^\omega(U\times\R^{(k+1)^2})$ 
of functions vanishing on $M$. Indeed, by theorem \ref{sheaf}.(\ref{Bbis}) the problem is local. 
If $x\in U-C$, the claim locally at $x$ is clear. 
Assume that $x\in C$, and let $(x_1,...,x_n)$ denote a local coordinate system of $U$ 
around $x$. 
As the claim does not depend on the choice of $\{g_i\}$, we can assume that $h_j=x_{j+1},\,j=0,...,c-1,\ h_j=
\sum_{i=0}^{c-1}\phi_{i,j}h_i,\,j=c,...,k,$ and $ g_1=(-\phi_{0,c},...,-\phi_{c-1,c},1,0,...,0),...,g_{k-c+1}
=(-\phi_{0,k},...,-\phi_{c-1,k},0,...,0,1),\ g_{k-c+2}\!=\!(x_2,-x_1,0,\!....,\!0),...,g_{k'}\!=\!(0,\!...,\!0,
x_{c},\!-x_{c-1},0,...,0)$ for some $C^\omega$ functions $\phi_{i,j}$ on a neighborhood of $x$ 
and for $k'=k-c+1+(c-1)!$. 
Then 

\begin{displaymath}
%\gather
(t_0\cdots t_k)=(t_0\cdots t_{c-1})
%\pmatrix
\left( \begin{array}{cccccc}
1&&0&\phi_{0,c}&\cdots&\phi_{0,k}\\
&\ddots&&\vdots&&\vdots\\
0&&1&\phi_{c-1,c}&\cdots&\phi_{c-1,k}\\
\end{array} \right),
\end{displaymath}

%\end{pmatrix},\\
%\pmatrix
\begin{displaymath}
\left( \begin{array}{c}
t_0\\
\vdots\\
t_k
\end{array} \right)
%\end{displaymath}
%\end{pmatrix}
=
%\pmatrix
%\begin{displaymath}
\left( \begin{array}{ccc}
1&&0\\
&\ddots&\\
0&&1\\
\phi_{0,c}&\cdots&\phi_{c-1,c}\\
\vdots&&\vdots\\
\phi_{0,k}&\cdots&\phi_{c-1,k}
\end{array} \right)
%\end{displaymath}
%\end{pmatrix}
%\pmatrix
%\begin{displaymath}
\left( \begin{array}{c}
t_0\\
\vdots\\
t_{c-1}
\end{array} \right).
\end{displaymath}
%\end{pmatrix}.
%\endgather

Therefore the matrix $(t_i,t_j)_{i,j=0,\ldots,k}$ is equal to

\begin{displaymath}
\left( \begin{array}{ccc}
%\pmatrix
1&&0\\
&\ddots&\\
0&&1\\
\phi_{0,c}&\cdots&\phi_{c-1,c}\\
\vdots&&\vdots\\
\phi_{0,k}&\cdots&\phi_{c-1,k}
\end{array} \right)
\left( \begin{array}{c}
t_0\\
\vdots\\
t_{c-1}
\end{array} \right)
(t_0\cdots t_{c-1})
\left( \begin{array}{cccccc}
1\!&&0&\phi_{0,c}&\cdots&\phi_{0,k}\\
\!&\ddots&&\vdots&&\vdots\\
0\!&&1&\phi_{c-1,c}&\cdots&\phi_{c-1,k}
\end{array} \right)
\end{displaymath}

whereas the matrix ${(y_{i,j})}_{i,j=0,\ldots,k}$ equals

\begin{displaymath}
\left( \begin{array}{ccc}
1\!&\!&\!0\\
\!&\!\ddots\!&\!\\
0\!&\!&\!1\\
\phi_{0,c}\!&\!\cdots\!&\!\phi_{c-1,c}\\
\vdots\!&\!&\!\vdots\\
\phi_{0,k}\!&\!\cdots\!&\!\phi_{c-1,k}
\end{array} \right)
\left( \begin{array}{ccc}
y_{0,0}\!&\!\cdots\!&\!y_{0,c-1}\\
\vdots&&\vdots\\
y_{c-1,0}\!&\!\cdots\!&\!y_{c-1,c-1}
\end{array} \right)
\left( \begin{array}{cccccc}
1\!&\!&\!0\!&\!\phi_{0,c}\!&\!\cdots\!&\!\phi_{0,k}\\
\!&\!\ddots\!&\!&\!\vdots\!&\!&\!\vdots\\
0\!&\!&\!1\!&\!\phi_{c-1,c}\!&\!\cdots\!&\!\phi_{c-1,k}
\end{array} \right).
\end{displaymath}

Hence we can forget $h_j$ and $y_{i,j}=y_{j,i},\ i=0,...,k,\,j=c,...,k$, and we can replace $N$ with its image under the projection $U\times\R^{(k+1)^2}
\ni(x,y_{i,j})\to(x,y_{i,j})_{i,j\le c-1}\in U\times\R^{c^2}$. 
Considering the realization of $M$ in $U\times\PP(k)$, it becomes 
$$
%\multline
\widetilde M=\{(x,y)\in U\times\R^{c^2}:l_{i,j,m}(x,y)=0,\ i,j,m=0,\ldots,c-1,$$
$$\sum_{j=0}^{c-1}y_{j,m}g_{i,j}(x)\!=\!0,\,i=k-c+2,...,k',m=0,...,c-1,\ \text{and}\ \xi'_i(y)=0,\,i=1,...,s'\},
%\endmultline
$$
where $\xi'_i$ are generators of $I(\PP(c-1))\subset\R[y_{i,j}]_{i,j\le c-1}$. Therefore it suffices to 
show that $l_{i,j,m},\,\sum_{j'=0}^{c-1}y_{j',m}g_{i',j'},\,\xi'_{i''},\,i,j,m=0,...,c-1,\,i'=k-c+2,...,k',
\,i''=1,...,s'$, generate $I(\widetilde M)$. 
However, by easy calculations we prove that $l_{i,j,m}$ and $\xi'_{i''}$ generate $I(\widetilde M)$. 
(To realize $M$ in $U\times\PP(k)$ we need the equations $\sum_{j=0}^kt_jg_{i,j}(x)=0,\ i=1,...,k-c+1$, 
which are equivalent to $t_c=t_0\phi_{0,c}+\cdots+t_{c-1}\phi_{c-1,c},...,t_k=t_0\phi_{0,k}+\cdots t_{c-1}
\phi_{c-1,k}$.) 

\subsection{Perturbation of a blowing-up}

When we perturb $h_i,\ i=0,...,k$, in the strong Whitney $C^\infty$ topology, the common zero set $Z(h_i)$ of $h_i$'s may become of smaller 
dimension than $C$ and singular, where the strong Whitney $C^\infty$ topology on $C^\infty(U)$ is defined to be the topology of the projective limit space of the topological spaces $C^\infty(U_k)$ with the $C^\infty$ topology for all compact $C^\infty$ submanifolds possibly with boundary $U_k$ of $U$. (Note that Whitney Approximation Theorem in \cite{W} holds also in this topology, and we call it Whitney Approximation Theorem.) 
However, we have

\begin{lemma}\label{pert}
Let $\tilde h_i,\ i=0,...,k$, and $\tilde g_i=(\tilde g_{i,0},...,\tilde g_{i,k}),\ i=1,...,k'$, be $C^\omega$ 
functions on $U$ and $C^\omega$ maps from $U$ to $\R^{k+1}$ close to $h_i$ and $g_i$, respectively, in the 
strong Whitney $C^\infty$ topology. 
Assume that (1) $\sum_{j=0}^k\tilde g_{i,j}\tilde h_j=0,\ i=1,...,k'$. 
Then 
\begin{itemize}
\item $\tilde C=Z(\tilde h_i)$ is smooth and of the same dimension as $C$, $\tilde h_0,...,\tilde h_k$ 
generate $I(Z(\tilde h_i))$ and $\tilde g_1,...,\tilde g_{k'}$ are generators of the sheaf of relations 
$\tilde {\mathcal A}$ of $\tilde h_0,...,\tilde h_k$. 
\item Let $\pi:M\to U$ and $\tilde\pi:\tilde M\to U$ denote the blowings-up along centers $C$ and $\tilde C$, 
respectively. 
Let $M$ and $\tilde M$ be realized in $U\times\PP(k)$ as in section \ref{sect-eu}. 
Then there exist analytic diffeomorphisms $\tau$ of $U$ and $\psi:M\to\tilde M$ close to id in the strong 
Whitney $C^\infty$ topology such that $\tau(C)=\tilde C$ and $\tilde\pi\circ\psi=\tau\circ\pi$.
\end{itemize} 
\end{lemma}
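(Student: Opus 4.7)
The plan is to prove the lemma in three steps: a local analysis of $\tilde C$ together with the generation statements, the construction of the analytic diffeomorphism $\tau$ of $U$, and the deduction of $\psi$ from $\tau$ via the universal property of blowings-up.

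First, I would use the local picture established in section \ref{sect-eu}. Near any $x_0\in C$, up to relabeling, $h_0,\ldots,h_{c-1}$ is a regular system of parameters for $C$, the identities $h_j=\sum_{i<c}\phi_{i,j}h_i$ for $j\ge c$ correspond to specific entries of $g_1,\ldots,g_{k-c+1}$ being the unit $1$, and the remaining $g_i$'s are Koszul syzygies with entries equal to $\pm x_i$. Strong Whitney $C^\infty$-closeness of $\tilde h_i$ to $h_i$ preserves the maximal rank of the Jacobian of $(h_0,\ldots,h_{c-1})$ near $x_0$, so $Z(\tilde h_0,\ldots,\tilde h_{c-1})$ is smooth of codimension $c$. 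Closeness of $\tilde g_i$ to $g_i$ keeps the corresponding ``unit'' entries of $\tilde g_i$ nonvanishing; the assumed relation $\sum_j \tilde g_{i,j}\tilde h_j=0$ then expresses each $\tilde h_j$ with $j\ge c$ as an $\mathcal O$-multiple of $\tilde h_0,\ldots,\tilde h_{c-1}$. Hence locally $\tilde C=Z(\tilde h_0,\ldots,\tilde h_{c-1})$ is smooth of the same codimension as $C$, and $\tilde h_0,\ldots,\tilde h_k$ generate $I(\tilde C)$. Off $C$ some $h_i$ is a unit, and so is $\tilde h_i$, confining $\tilde C$ to a small neighborhood of $C$. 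The statement that $\tilde g_1,\ldots,\tilde g_{k'}$ generate $\tilde{\mathcal A}$ follows by the same projection argument as in section \ref{sect-eu}, once the $\tilde h_i$ are known to share the generation and syzygy pattern of the $h_i$.

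Second, I would construct $\tau$. Locally, completing $(\tilde h_0,\ldots,\tilde h_{c-1})$ to an analytic coordinate system close to $(h_0,\ldots,h_{c-1},x_{c+1},\ldots,x_n)$ by the analytic inverse function theorem yields a local analytic diffeomorphism close to the identity and carrying $C$ to $\tilde C$. To globalize while keeping analyticity, I would realize $\tilde C$ as the graph of a small analytic section $\sigma$ of the normal bundle $N_CU$ inside an analytic tubular neighborhood of $C$ and define $\tau$ as translation by $\sigma$ in those coordinates. Since an analytic cutoff is unavailable, I first extend by a smooth cutoff to obtain a $C^\infty$ diffeomorphism of $U$ that is identity far from $C$ and equal to the preceding construction near $C$, then invoke Whitney Approximation Theorem in the strong Whitney $C^\infty$ topology (recalled in the preamble) to replace it by an analytic diffeomorphism close to the identity. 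A final analytic adjustment along the normal directions, using the analytic implicit function theorem applied to $\tilde h_0,\ldots,\tilde h_{c-1}\circ\tau$, restores the exact equality $\tau(C)=\tilde C$.

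Third, I would construct $\psi$ and verify closeness. Since $\tau(C)=\tilde C$ with both reduced and smooth, $\tau^{-1}\tilde{\mathcal I}\cdot\mathcal O_U=\mathcal I$, hence $(\tau\circ\pi)^{-1}\tilde{\mathcal I}\cdot\mathcal O_M=\pi^{-1}\mathcal I\cdot\mathcal O_M$ is invertible; the universal property of $\tilde\pi:\tilde M\to U$ then produces a unique analytic morphism $\psi:M\to\tilde M$ with $\tilde\pi\circ\psi=\tau\circ\pi$, which is an isomorphism by the symmetric construction applied to $\tau^{-1}$. In the realization of section \ref{sect-eu}, a point $(x,t)\in M-\pi^{-1}(C)$ with $t=(h_0(x):\ldots:h_k(x))$ is mapped to $(\tau(x),(\tilde h_0(\tau(x)):\ldots:\tilde h_k(\tau(x))))$, and closeness of $\tau$ to the identity together with closeness of $\tilde h_i$ to $h_i$ gives $C^\infty$-closeness under the algebraic embedding $\PP(k)\hookrightarrow\R^{(k+1)^2}$; over $\pi^{-1}(C)$, closeness on the exceptional divisor is checked in a standard local chart for the blowing-up obtained by solving one of the equations $t_ih_j=t_jh_i$.

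The main obstacle is Step 2: producing an analytic $\tau$ close to the identity in the strong Whitney $C^\infty$ topology with $\tau(C)=\tilde C$ \emph{exactly}. The local analytic construction is immediate and the $C^\infty$ globalization is standard, but bridging them analytically requires the combination of Whitney approximation with a normal-direction analytic correction, which is the delicate heart of the argument.
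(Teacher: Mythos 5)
The overall architecture of your proposal parallels the paper's, but there is a genuine gap in the first half, and the third step takes a genuinely different (and valid) route.

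The gap: you assert that $\tilde g_1,\ldots,\tilde g_{k'}$ generating the relation sheaf $\tilde{\mathcal A}$ ``follows by the same projection argument as in section \ref{sect-eu}.'' It does not. That projection argument only identifies the abstract shape of $\mathcal A_x$ (generated by $k-c+1$ ``projection'' relations plus $(c-1)!$ Koszul syzygies); it says nothing about whether the \emph{given} tuple of close-by global sections $\tilde g_i$ generates the perturbed relation module. In fact this is precisely the delicate point the paper postpones and handles last. Their argument first reduces, via the invertible matrix $A=I+(\chi_{i,j})$ (the same matrix they use to build $\psi$), to the case $\tilde h_i=h_i$, normalizes to the Koszul situation $k=c-1$, $k'=(c-1)!$, and then invokes Nakayama: they identify $\mathfrak m_x\mathcal A_x$ with $\mathfrak m_x^2\mathcal O_x^c\cap\mathcal A_x$ by a dimension count, interpret $\mathcal O_x^c/\mathfrak m_x^2\mathcal O_x^c$ as the $1$-jet space, and conclude that Whitney $C^1$-closeness of $\tilde g_i$ to $g_i$ forces the classes of $\tilde g_{ix}$ in $\mathcal A_x/\mathfrak m_x\mathcal A_x$ to remain linearly independent, hence to generate. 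Nothing in your sketch supplies a replacement for this Nakayama/1-jet step, and closeness alone does not imply generation of the stalks.

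On the third step you diverge in a defensible way. The paper, after absorbing $\tau$ and reducing to $Z(h_i)=Z(\tilde h_i)$, writes $\tilde h_i-h_i=\sum_j\chi_{i,j}h_j$ with small $\chi_{i,j}$, sets $A=I+(\chi_{i,j})$, and defines $\psi(x,t)=(x,tA(x))$; this is explicit, immediately yields $\tilde\pi\circ\psi=\pi$, and makes closeness to $\id$ transparent because $A$ is uniformly near the identity matrix. You instead invoke the universal property of blowing-up applied to $\tau\circ\pi$ and the ideal sheaf $\tilde{\mathcal I}$, which is cleaner and choice-free, but then closeness must be checked separately (you gesture at this via the off-exceptional formula and local charts), and you also rely on the first half of the lemma having established that $\tilde h_i$ generate the \emph{reduced} ideal of $\tilde C$, which is needed to identify the blowing-up ideal with $\tilde{\mathcal I}$. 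Both routes work; the paper's is preferable here mainly because the matrix $A$ is then reused to finish the generation proof. Your second step (defining $\tau$ near $C$ as a normal-bundle translation, cutting off, Whitney-approximating, then correcting normally) is in the same spirit as the paper's tubular-neighborhood construction, though the paper pins down $\tau|_C$ first via the condition $p\circ\tau=\id$ so that the exact equality $\tau(C)=\tilde C$ is maintained by construction rather than restored after the fact.
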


\begin{demo}
The problem in the former half is local and clear around a point outside of $C$, and hence we 
assume that $h_j=x_{j+1},\,j=0,...,c-1$, for a local coordinate system $(x_1,...,x_n)$, and $h_j=\sum_{i=0}^
{c-1}\phi_{i,j}h_i,\,j=c,...,k,$ for some $C^\omega$ functions $\phi_{i,j}$ on $U$. 
Then $Z(\tilde h_0,...,\tilde h_{c-1})$ is smooth and of the same dimension as $C$. 
Hence we need to see that $\tilde h_j,\ j=c,...,k$, are contained in the ideal of $C^\omega(U)$ generated 
by $\tilde h_j,\ j=0,...,c-1$. 
Choose $C^\omega$ functions $\alpha_{i,j},\ i=1,...,k-c+1,\,j=1,...,k'$, on $U$ so that 
$$\begin{pmatrix}
\alpha_{1,1}&\cdots&\alpha_{1,k'}\\
\vdots&&\vdots\\
\alpha_{k-c+1,1}&\cdots&\alpha_{k-c+1,k'}
\end{pmatrix}
\begin{pmatrix}
g_1\\
\vdots\\
g_{k'}
\end{pmatrix}$$
is of the form 
$$\begin{pmatrix}
-\phi_{0,1}&\cdots&-\phi_{c-1,1}&1&&0\\
\vdots&&\vdots&&\ddots&\\
-\phi_{0,k-c+1}&\cdots&-\phi_{c-1,k-c+1}&0&&1
\end{pmatrix}.$$
Set
$$
\begin{pmatrix}
\tilde g'_{1,0}&\cdots&\tilde g'_{1,k}\\
\vdots&&\vdots\\
\tilde g'_{k-c+1,0}&\cdots&\tilde g'_{k-c+1,k}
\end{pmatrix}=\begin{pmatrix}
\alpha_{1,1}&\cdots&\alpha_{1,k'}\\
\vdots&&\vdots\\
\alpha_{k-c+1,1}&\cdots&\alpha_{k-c+1,k'}
\end{pmatrix}
\begin{pmatrix}
\tilde g_1\\
\vdots\\
\tilde g_{k'}
\end{pmatrix}.
$$
Then 
$\begin{pmatrix}
\tilde g'_{1,c}&\cdots&\tilde g'_{1,k}\\
\vdots&&\vdots\\
\tilde g'_{k-c+1,c}&\cdots&\tilde g'_{k-c+1,k}
\end{pmatrix}$ 
is close to 
$\begin{pmatrix}
1&&0\\
&\ddots&\\
0&&1
\end{pmatrix}
$. 
Hence 
$$
\begin{pmatrix}
\tilde g'_{1,c}&\cdots&\tilde g'_{1,k}\\
\vdots&&\vdots\\
\tilde g'_{k-c+1,c}&\cdots&\tilde g'_{k-c+1,k}
\end{pmatrix}^{-1}
\begin{pmatrix}
\alpha_{1,1}&\cdots&\alpha_{1,k'}\\
\vdots&&\vdots\\
\alpha_{k-c+1,1}&\cdots&\alpha_{k-c+1,k'}
\end{pmatrix}
\begin{pmatrix}
\tilde g_1\\
\vdots\\
\tilde g_{k'}
\end{pmatrix}
$$
is well-defined and of the form 
$\begin{pmatrix}
\tilde g''_{1,0}&\cdots&\tilde g''_{1,c-1}&1&&0\\
\vdots&&\vdots&&\ddots&\\
\tilde g''_{k-c+1,0}&\cdots&\tilde g''_{k-c+1,c-1}&0&&1
\end{pmatrix}$.
Now (1) implies 
$\begin{pmatrix}
\tilde g_1\\
\vdots\\
\tilde g_{k'}
\end{pmatrix}
\begin{pmatrix}
\tilde h_0\\
\vdots\\
\tilde h_k
\end{pmatrix}
=\begin{pmatrix}
0\\
\vdots\\
0
\end{pmatrix}$. 
Therefore, 
$$
\begin{pmatrix}
\tilde g''_{1,0}&\cdots&\tilde g''_{1,c-1}&1&&0\\
\vdots&&\vdots&&\ddots&\\
\tilde g''_{k-c+1,0}&\cdots&\tilde g''_{k-c+1,c-1}&0&&1
\end{pmatrix}
\begin{pmatrix}
\tilde h_0\\
\vdots\\
\tilde h_k
\end{pmatrix}=
\begin{pmatrix}
0\\
\vdots\\
0
\end{pmatrix}
$$
and $\tilde h_j=-\sum_{i=0}^{c-1}\tilde g''_{j-c+1,i}\tilde h_i,\ j=c,...,k$.

We need to see that $\tilde g_1,...,\tilde g_{k'}$ are generators of $\tilde {\mathcal A}$. 
By (1) they are global cross-sections of $\tilde {\mathcal A}$. 
We postpone proving $\tilde g_1,...,\tilde g_{k'}$ generate $\tilde{\mathcal A}$.

Next we prove the latter half of the lemma. 
We first define $\tau$ on $C$. 
The condition on $\tau|_C$ to be satisfied is $\tau(C)=\tilde C$. 
Let $U\subset\R^N$, let $q$ denote the orthogonal projection of a tubular neighborhood of $U$ in $\R^N$, and 
let $p:V\to C$ denote the proper orthogonal projection of a small closed tubular neighborhood of $C$ in $U$. 
We want to require $\tau$ to satisfy, moreover, $p\circ\tau=\id$ on $C$. 
Then $\tau|_C$ is unique and the problem of finding $\tau|_C$ is local. 
Hence we assume as above that $h_j=x_{j+1},\,j=0,...,c-1$, for a local coordinate system $(x_1,...,x_n)$ at each 
point of $C$ and $h_j=\sum_{i=0}^{c-1}\phi_{i,j}h_i,\,j=c,...,k,$ for some $C^\omega$ functions $\phi_{i,j}$. 
Then $\tau|_C$ is well-defined (cf. proof of lemma 4.2 in \cite{FS}), and $\tau|_C$ is an analytic embedding of $C$ 
into $U$ close to id in the strong Whitney $C^\infty$ topology.

Secondly, we extend $\tau|_C$ to $V$ by setting $\tau(x)=q(\tau\circ p(x)+x-p(x))$ for $x\in V$, which is 
close to id in the strong Whitney $C^\infty$ topology. 
Moreover, using the extension, we extend $\tau|_C$ to an analytic diffeomorphism $\tau$ of $U$ close to id in 
the same topology by using a partition of unity of class $C^\infty$
and theorem \ref{sheaf}.(\ref{Bbis}), combined with Whitney Approximation 
Theorem \cite{W}.

Lastly, we need to find $\psi$. 
Set $\Tilde{\Tilde h}_i=h_i\circ\tau^{-1}$ and $\Tilde{\Tilde g}_i=g_i\circ\tau^{-1}$ and define $\Tilde{
\Tilde M}$ by $\Tilde{\Tilde h}_i$ and $\Tilde{\Tilde g}_i$ in $U\times\PP(k)$ and $\Tilde{\Tilde\pi}:\Tilde{
\Tilde M}\to U$. 
Then 
$$
%\multline
\Tilde{\Tilde M}=\{(x,t)\in U\times\PP(k):t_ih_j\circ\tau^{-1}(x)=t_jh_i\circ\tau^{-1}(x),\,i,j=0,\ldots,k,\ 
\text{and}$$
$$
\sum_{j=0}^kt_jg_{i,j}\circ\tau^{-1}(x)=0,\,i=1,...,k'\}, 
%\endmultline
$$
and $\Tilde{\Tilde\psi}:M\ni(x,t)\to(\tau(x),t)\in\Tilde{\Tilde M}$ is an analytic diffeomorphism close to 
id in the strong Whitney $C^\infty$ topology such that $\Tilde{\Tilde\pi}\circ\Tilde{\Tilde\psi}=\tau\circ\pi$. 
Hence we can replace $h_i$ and $g_i$ with $h_i\circ\tau^{-1}$ and $g_i\circ\tau^{-1}$, respectively. 
Thus we assume from the beginning that $Z(h_i)=Z(\tilde h_i)$. 
Set $h'_i=\tilde h_i-h_i$. 
Then there exist analytic functions $\chi_{i,j}$, $i,j=0,...,k$, on $U$ close to 0 in the topology such that 
$h'_i=\sum_{j=0}^k\chi_{i,j}h_j$, $i=0,...,k$, which is proved, as
before, by using a partition of unity of class $C^\infty$ and theorem
\ref{sheaf}.(\ref{Bbis}) combined with Whitney Approximation Theorem.

Set $A(x)=\begin{pmatrix}
1&&0\\
&\ddots&\\
0&&1
\end{pmatrix}
+
\begin{pmatrix}
\chi_{0,0}&\cdots&\chi_{k,0}\\
\vdots&&\vdots\\
\chi_{0,k}&\cdots&\chi_{k,k}
\end{pmatrix}$ and define an analytic diffeomorphism $\psi$ of $U\times\PP(k)$ by $\psi(x,t)=(x,tA(x))$ for 
$(x,t)\in U\times\PP(k)$. 
Then $(h_0,...,h_k)A=(\tilde h_0,...,\tilde h_k)$ on $U$, hence $\psi(M)=\tilde M$, $\tilde\pi\circ\psi=
\pi$ on $M$ and $\psi|_M$ is close to id in the topology, which proves the latter half.

It remains to show that $\tilde g_1,...,\tilde g_{k'}$ generate $\tilde {\mathcal A}$, i.e. $\tilde g_1,...,\tilde g
_{k'}$ generate the $C^\omega(U)$-module $\tilde X$ defined by $\tilde X=\{\tilde g\in(C^\omega(U))^{k+1}
:\tilde g\begin{pmatrix}\tilde h_0\\\vdots\\\tilde h_k\end{pmatrix}=0\}$ by theorem \ref{sheaf}.(\ref{B}).  
As above we can assume that $Z(h_i)=Z(\tilde h_i)$. 
Moreover, we suppose that $\tilde h_i=h_i$ for any $i$ for the following reason. 
For the above $A$ we have ${}^tA\begin{pmatrix} h_0\\\vdots\\h_k\end{pmatrix}=\begin{pmatrix}\tilde h_0\\\vdots\\\tilde h_k
\end{pmatrix}$. 
Hence $\tilde X=\{\tilde g\in(C^\omega(U))^{k+1}:\tilde g{}^tA\begin{pmatrix} h_0\\\vdots\\ h_k\end{pmatrix}=0\}$. 
Then it suffices to see that $\tilde g_1{}^tA,...,\tilde g_{k'}{}^tA$ generate the $C^\omega(U)$-module $X=\{g\in(C^\omega(U))^{k+1}:g\begin{pmatrix} h_0\\\vdots\\ h_k\end{pmatrix}=0\}$ because the map 
$(C^\omega(U))^{k+1}\ni\tilde g\to\tilde g{}^tA\in(C^\omega(U))^{k+1}$ is an isomorphism as $C^\omega(U)
$-modules. 
Here $\tilde g_1{}^tA,...,\tilde g_{k'}{}^tA$ are close to $g_1,...,g_{k'}$ respectively. 
Therefore, replacing $\begin{pmatrix}\tilde h_0\\\vdots\\\tilde h_k\end{pmatrix}$ and $\begin{pmatrix}\tilde g_0\\\vdots\\
\tilde g_{k'}\end{pmatrix}$ with $\begin{pmatrix} h_0\\\vdots\\ h_k\end{pmatrix}$ and $\begin{pmatrix}\tilde g_0\\\vdots\\
\tilde g_{k'}\end{pmatrix}{}^tA$, respectively, we suppose from the beginning that $\tilde h_i=h_i$ for all $i$ 
and $\tilde{\mathcal A}=\mathcal A$.

As above, the problem is local at each point of $C$ and we assume that $h_j=x_{j+1},\ j=0,...,c-1$, for a local 
coordinate system $(x_1,...,x_n)$. 
Recall that
\begin{equation}\tag{*}
\begin{pmatrix}
\beta_{1,1}&\cdots&\beta_{1,k'}\\
\vdots&&\vdots\\
\beta_{k-c+1,1}&\cdots&\beta_{k-c+1,k'}
\end{pmatrix}
\begin{pmatrix}
\tilde g_1\\
\vdots\\
\tilde g_{k'}
\end{pmatrix}
=\begin{pmatrix}
\cdot&\cdots&\cdot&1&&0\\
\vdots&&\vdots&&\ddots&\\
\cdot&\cdots&\cdot&0&&1
\end{pmatrix}
\end{equation} 
for some $C^\omega$ functions $\beta_{i,j}$ on $U$. 
Let $r$ be the restriction to $\mathcal A$ of the projection of $\mathcal O^{k+1}$ to the last $k-c+1$ factors 
and let $r_*:H^0(U,\mathcal A)\to(C^\omega(U))^{k-c+1}$ denote the induced map. 
Then $(*)$ implies that $r_*(\tilde g_1),...,r_*(\tilde g_{k'})$ generate $r(\mathcal A)=\mathcal O^{k-c+1}$. 
Hence it suffices to see that $\mathcal A\cap\mathcal O^c\times\{0\}\times\cdots\times\{0\}$ is generated by $\{\sum_{i=
1}^{k'}\beta_i\tilde g_i:\beta_i\in C^\omega(U),\ r_*(\sum_{i=1}^{k'}\beta_i\tilde g_i)=0\}$.

Since $g_1,...,g_{k'}$ generate $\mathcal A$, there exist $C^\omega$ functions $\gamma_{i,j},\ i=1,...,(c-1)!,
\ j=1,...,k'$, such that 
$$
\begin{pmatrix}\gamma_{1,1}&\cdots&\gamma_{1,k'}\\
\vdots&&\vdots\\
\gamma_{(c-1)!,1}&\cdots&\gamma_{(c-1)!,k'}\end{pmatrix}\begin{pmatrix} g_1\\
\vdots\\g_{k'}
\end{pmatrix}
=\begin{pmatrix} x_2\!&\!-x_10\cdots\!&\!0\!&\!0&\cdots&0\\
\vdots&&\vdots&\vdots&&\vdots\\
0\!&\cdots0x_c\!&\!-x_{c-1}\!&\!0&\cdots&0
\end{pmatrix},
$$
whose rows are global generators of $\mathcal A\cap\mathcal O^c\times\{0\}\times\cdots\times\{0\}$. 
Consider the matrix 
$\begin{pmatrix}\gamma_{1,1}&\cdots&\gamma_{1,k'}\\\vdots&&\vdots\\\gamma_{(c-1)!,1}&\cdots&\gamma_{(c-1)!,
k'}\end{pmatrix}\begin{pmatrix}\tilde g_1\\\vdots\\\tilde g_{k'}\end{pmatrix}$. 
Its $(i,j)$ components, $i=1,...,(c-1)!,\ j=(c-1)!+1,...,k'$, are close to 0. 
Hence by $(*)$ there exist $C^\omega$ functions $\delta_{i,j},\ i=1,...,(c-1)!,\ j=1,...,k'$, close to 0 such 
that the $(i,j)$ components, $i=1,...,(c-1)!,\ j=(c-1)!+1,...,k'$, of the matrix 
$$\begin{pmatrix}\gamma_{1,1}&\cdots&
\gamma_{1,k'}\\\vdots&&\vdots\\\gamma_{(c-1)!,1}&\cdots&\gamma_{(c-1)!,k'}\end{pmatrix}\begin{pmatrix}\tilde g_1\\\vdots
\\\tilde g_{k'}\end{pmatrix}-\begin{pmatrix}\delta_{1,1}&\cdots&\delta_{1,k'}\\\vdots&&\vdots\\\delta_{(c-1)!,1}&\cdots
&\delta_{(c-1)!,k'}\end{pmatrix}\begin{pmatrix}\tilde g_1\\\vdots\\\tilde g_{k'}\end{pmatrix}$$ are 0. 
Replace $\gamma_{i,j}$ with $\tilde\gamma_{i,j}=\gamma_{i,j}-\delta_{i,j}$. 
Then the $(i,j)$ components, $i=1,...,(c-1)!,\ j=(c-1)!+1,...,k'$, of the matrix $\begin{pmatrix}\tilde\gamma_{1,1}&
\cdots&\tilde\gamma_{1,k'}\\\vdots&&\vdots\\\tilde\gamma_{(c-1)!,1}&\cdots&\tilde\gamma_{(c-1)!,k'}\end{pmatrix}
\begin{pmatrix}\tilde g_1\\\vdots\\\tilde g_{k'}\end{pmatrix}$ are 0, and each row is an approximation of the 
corresponding row of the matrix $\begin{pmatrix} x_2\!&\!-x_10\cdots\!&\!0\!&\!0&\cdots&0\\\vdots&&\vdots&\vdots&&
\vdots\\0\!&\cdots0x_c\!&\!-x_{c-1}\!&\!0&\cdots&0\end{pmatrix}$. 
Therefore, we can suppose from the beginning that $k=c-1,\ k'=(c-1)!,\,g_1=(x_2,x_1,0,...,0),...,g_{k'}=(0,...,0,
x_c,-x_{c-1})$.

Let $\mathfrak m_x$ denote the maximal ideal of $\mathcal O_x$ for $x\in U$. 
For each $x\in C$, $\tilde g_{1x},...,\tilde g_{k'x}$ generate $\mathcal A_x$ if and only if $\tilde g_{1x},...,
\tilde g_{k'x}$ and $\mathfrak m_x\mathcal A_x$ generate $\mathcal A_x$ by Nakayama lemma. 
On the other hand, the images of $g_{1x},...,g_{k'x}$ in the linear space $\mathcal A_x/\mathfrak m_x\mathcal A_x$, $x\in 
C$, are a basis and hence $\mathcal A_x/\mathfrak m_x\mathcal A_x$ is a linear space of dimension $k'$. 
Hence it suffices to see that the images of $\tilde g_{1x},...,\tilde g_{k'x}$ in $\mathcal A_x/\mathfrak m_x\mathcal A_x$, $x
\in C$, are linearly independent. 
Here $\mathfrak m_x\mathcal A_x=\mathfrak m_x^2\mathcal O_x^c\cap\mathcal A_x$ because clearly $\mathfrak m_x\mathcal A_x\subset\mathfrak m_x
^2\mathcal O_x^c\cap\mathcal A_x$ and $\mathcal A_x/(\mathfrak m_x^2\cap\mathcal A_x)\,(\approx(\mathfrak m_x^2\mathcal O_x^c+\mathcal A_x)/
\mathfrak m_x^2\mathcal O_x^c)$ and $\mathcal A_x/\mathfrak m_x\mathcal A_x$ are linear spaces of the same dimension. 
Now $\cup_{x\in U}\mathcal O_x^c/\mathfrak m_x^2\mathcal O_x^c$ coincides with the space of 1-jets from $U$ to $\R^c$. 
Hence for $x\in C$, the images of $\tilde g_{1x},...,\tilde g_{k'x}$ in $\mathcal O_x^c/\mathfrak m_x^2\mathcal O_x^c$, i.e. 
in $\mathcal A_x/(\mathfrak m_x^2\mathcal O_x^c\cap\mathcal A_x)$ are linearly independent because $\tilde g_1,...,\tilde g_{k'}$ 
are sufficiently close to $g_1,...,g_{k'}$, respectively, in the Whitney $C^1$ topology and the images of 
$g_{1x},...,g_{k'x}$ are linearly independent. 

\end{demo}

\begin{rmk}\label{rm-lem}\begin{flushleft}\end{flushleft}
\begin{enumerate}
\item In lemma \ref{pert}, $\tau|_C$ is an embedding of $C$ into $U$ close to id in the strong Whitney $C^\infty$ 
topology such that $\tau(C)=\tilde C$. 
Conversely, assume that there exist an analytic embedding $\tau_C$ of $C$ into $U$ close to id in the same topology. 
Then $\tau_C$ is extensible to an analytic diffeomorphism $\tau$ of $U$ close to id in the topology. 
Define $\tilde C,\ \tilde h_i\ \tilde g_i$ and $\tilde\pi:\tilde M\to\tilde U$ to be $\tau(C),\ h_i\circ\tau
^{-1},\ g_i\circ\tau^{-1}$ and the blowing-up of $U$ along center $\tilde C$, respectively. 
Realize $M$ and $\tilde M$ in $U\times\PP(k)$ as before. 
Then $\tilde h_i$ and $\tilde g_i$ are close to $h_i$ and $g_i$ respectively, $\sum_{j=0}^
k\tilde g_{i,j}\tilde h_j=0$, and hence by lemma \ref{pert} there exists an analytic diffeomorphism $\psi:M\to
\tilde M$ close to id in the topology such that $\tilde\pi\circ\psi=\tau\circ\pi$.

When there exists this kind of $\tau_C$, we say $\tilde C$ is {\it close} to $C$ in the strong Whitney 
$C^\infty$ topology. 
Let $\psi:M_1\to M_2$ and $\tilde\psi:\tilde M_1\to\tilde M_2$ be analytic maps between analytic manifolds 
with $M_1\subset\R^{n_1},\ \tilde M_1\subset\R^{n_1},\ M_2\subset\R^{n_2}$ and $\tilde M_2\subset\R^{n_2}$. 
Assume that $\tilde M_1$ is close to $M_1$ in the topology through an analytic diffeomorphism $\tau:M_1\to
\tilde M_1$ close to id in the topology. 
Then we say $\tilde\psi$ is {\it close} to $\psi$ in the topology if
$\tilde\psi \circ\tau$ is so to $\psi$. 
\item \label{r2} The germ case of lemma \ref{pert} holds in the following sense. 
Let $h_i,\ g_i,\ U$ and $C$ be the same as above. 
Let $X$ be a compact subset of $U$, and let $\tilde h_i$ and $\tilde g_i$ be $C^\omega$ functions and maps 
defined on an open neighborhood $V$ of $X$ in $U$ close to $h_i|_V$ and $g_i|_V$, respectively, in the 
compact-open $C^\infty$ topology with $\sum_{j=0}^k\tilde g_{i,j}\tilde h_j=0$. 
Shrink $V$. 
Then the same statement as the former half of lemma \ref{pert} holds. 
For the latter half, let $\pi:M\to U$ and $\tilde\pi:\tilde M\to V$ denote the blowings-up along centers 
$C$ and $\tilde C=Z(\tilde h_i)$. 
Let $M\subset U\times\PP(k)$ and $\tilde M\subset V\times\PP(k)$ be as above. 
Then there exist analytic embeddings $\tau_-:V\to U$ and $\psi_-:\tilde M\to M$ close to id in the 
compact-open $C^\infty$ topology such that $\tau_-\circ\tilde\pi=\pi\circ\psi_-$. 
In this case we say $\tilde C$ is {\it close} to $C$ at $X$ in the $C^\infty$ topology, and define {\it closeness} 
of an analytic map to another one at a compact set. 
\end{enumerate}
\end{rmk}

\subsection{Nash approximation}

We state and prove a Nash approximation theorem of an analytic desingularization of a Nash function. This result will be crucial for the proof of theorem \ref{main}.

\begin{thm}\label{Nresol} 
Let $f$ be a Nash function on a Nash manifold $M$. 
Let $M_m \xrightarrow{\pi_m} M_{m-1}\longrightarrow\cdots \xrightarrow{\pi_1} M_0=M$ 
be a sequence of blowings-up along smooth analytic centers $C_{m-1}$ in $M_{m-1},...,C_0$ in $M_0$, 
respectively, such that $f\circ\pi_1\circ\cdots\circ\pi_m$ has only normal crossing singularities.  
Let $X$ be a compact subset of $M$. 
Then there exist an open semialgebraic neighborhood $U$ of $X$ in $M$, a sequence of blowings-up $U_m
\xrightarrow{\tau_m} U_{m-1}\longrightarrow\cdots \xrightarrow{\tau_1} U_0=U$ along 
smooth Nash centers $D_{m-1}$ in $U_{m-1},...,D_0$ in $U_0$, respectively, and an analytic embedding $\psi:
U_m\to M_m$ such that $\psi(\tau_m^{-1}(D_{m-1}))\subset\pi_m^{-1}(C_{m-1}),...,\psi((\tau_1\circ\cdots\circ
\tau_m)^{-1}(D_0))\subset(\pi_1\circ\cdots\circ\pi_m)^{-1}(C_0)$ and $f\circ\pi_1\circ\cdots\circ\pi_m\circ\psi=f
\circ\tau_1\circ\cdots\circ\tau_m$. 
Let $M_1,...,M_m$ be realized in $M\times\PP(k_0),...,M\times\PP(k_0)\times\cdots\times\PP(k_{m-1})$, 
respectively, for some $k_0,...,k_{m-1}\in\N$. 
Then we can realize $U_1,...,U_m$ in $U\times\PP(k_0),...,U\times\PP(k_0)\times\cdots\times\PP(k_{m-1})$, 
respectively, so that each pair $D_i\subset U_i$ and $\psi$ are close to $C_i\subset M_i$ at $(\tau_1\circ
\cdots\circ\tau_{i-1})^{-1}(X)$ and to id at $(\tau_1\circ\cdots\circ\tau_m)^{-1}(X)$, respectively, in 
the $C^\infty$ topology. 
\end{thm}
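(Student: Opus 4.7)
The natural approach is induction on $m$, reducing the problem at each step to the Nash approximation of a single smooth analytic center by a smooth Nash one. The base case $m=0$ is vacuous. For the inductive step, I would first replace the initial analytic blowing-up $\pi_1$ along $C_0\subset M$ by a Nash blowing-up $\tau_1:U_1\to U$ along a Nash center $D_0$ close to $C_0$ at $X$, together with an analytic embedding $\psi_1:U_1\to M_1$ close to the identity intertwining the two blowings-up. Then I would transfer the remaining centers $C_1,\dots,C_{m-1}$ through $\psi_1$ and apply the inductive hypothesis to the Nash manifold $U_1$, the Nash function $f\circ\tau_1$, the compact set $X_1=\tau_1^{-1}(X)$, and the pulled-back analytic sequence of $m-1$ blowings-up.

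To carry out the Nash approximation of $C_0$, I would exploit the Euclidean realization of Section \ref{sect-eu}. By Cartan Theorem A in the form of Theorem \ref{sheaf}.(\ref{A})--(\ref{Abis}), the ideal sheaf of $C_0$ admits finitely many global analytic generators $h_0,\dots,h_k$; by Oka's theorem and again Cartan A, the sheaf of relations is generated by finitely many global sections $g_1,\dots,g_{k'}$ satisfying $\sum_{j}g_{i,j}h_j=0$. This system of polynomial relations in the unknowns $h_j,g_{i,j}$ has Nash coefficients (in fact trivial coefficients), and $(h_j,g_{i,j})$ is an analytic solution. By N\'eron Desingularization \cite{Sp}, on some open semialgebraic neighborhood $U$ of $X$ one can find Nash functions $\tilde h_j,\tilde g_{i,j}$ arbitrarily close to $h_j,g_{i,j}$ in the $C^\infty$ topology on $X$ and \emph{exactly} satisfying $\sum_j\tilde g_{i,j}\tilde h_j=0$. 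Feeding these into the germ version of Lemma \ref{pert} (Remark \ref{rm-lem}.(\ref{r2})) then produces simultaneously: a smooth Nash center $D_0=Z(\tilde h_j)$ close to $C_0$ at $X$, its Euclidean realization $U_1\hookrightarrow U\times\PP(k_0)$ of the Nash blowing-up $\tau_1$, and an analytic embedding $\psi_1:U_1\to M_1$ close to the identity with $\pi_1\circ\psi_1=\tau_1$ (up to the inclusion $U\hookrightarrow M$).

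With $\psi_1$ in hand I would pull back the remaining data: the analytic centers $\psi_1^{-1}(C_1),\dots$ on $U_1,U_2,\dots$ obtained by iterating the construction. Because $\psi_1$ is analytic and close to the identity, these pulled-back centers remain smooth, have normal crossings with the previous exceptional divisors, and desingularize $f\circ\tau_1$. The compactness of $X_1$ and the Nash structure of $U_1$ let me apply the inductive hypothesis and obtain $D_1,\dots,D_{m-1}$ together with the required analytic embedding $\psi'$; composing with $\psi_1$ yields $\psi$. The realization compatibility and $C^\infty$-closeness statements in the conclusion follow by tracking the closeness estimates at each inductive step.

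The main obstacle is step two: the system of relations $\sum_j g_{i,j}h_j=0$ must be satisfied \emph{exactly} by the Nash approximants, not merely approximately, because the coherence of the sheaf structure (and hence the applicability of Lemma \ref{pert}) depends on this. Plain Nash approximation of the $h_j$ alone would destroy coherence and possibly the smoothness of the center. This is precisely the situation where N\'eron desingularization is indispensable, and this is the place where the noncompactness of $U$ must be handled carefully---which is why the conclusion is phrased in terms of closeness at the compact set $X$ rather than globally. A secondary technical point is verifying that after perturbation the new centers continue to have normal crossings with the previous exceptional divisors, but this is automatic from the $C^\infty$-closeness, since the normal crossing condition is open in the Whitney topology.
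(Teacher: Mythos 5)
Your high-level strategy — express the blowings-up via the Euclidean realization of Section \ref{sect-eu}, use Lemma \ref{pert} (Remark \ref{rm-lem}.(\ref{r2})) to control perturbations, and invoke N\'eron desingularization to get \emph{exact} Nash solutions of the defining polynomial relations — is the right toolkit, and is indeed what the paper uses. But your organization and one key claim differ from the paper in ways that leave genuine gaps.

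\textbf{The main gap is your last sentence.} You write that normal crossing of the perturbed centers with the prior exceptional divisors is ``automatic from the $C^\infty$-closeness, since the normal crossing condition is open in the Whitney topology.'' This is false in the case that matters. Normal crossing of the center $C_i$ with the accumulated exceptional divisor $A$ means, locally, that $C_i$ is either transversal to a component of $A$ or \emph{contained} in it. Transversality is open; containment is not. If $C_1\subset\pi_1^{-1}(C_0)$, a Nash approximation of $C_1$ that is merely $C^\infty$-close will generically leave $\pi_1^{-1}(C_0)$, and then $\tau_1,\dots,\tau_m$ is not a valid sequence of blowings-up in the paper's sense (and the composed function need no longer factor through it nicely). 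The paper spends most of its effort on exactly this: it splits each $C_{m'}$ according to which components $Y_{m'',m'}$ of the prior exceptional set it lies in or meets transversally, encodes each containment as an exact $C^\omega$ relation (conditions (3), (4), (5), (6)), and insists that the Nash approximants satisfy these relations exactly; only then does the perturbed center stay inside the perturbed divisor.

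\textbf{Secondary gap: the induction structure does not deliver the nesting you need.} You pull back the remaining tower through $\psi_1$ and reapply the theorem with $m$ replaced by $m-1$. But the inductive statement then sees $f\circ\tau_1$ on $U_1$ with a ``fresh'' analytic tower and no memory of $\tau_1^{-1}(D_0)$: it produces $D_1,\dots,D_{m-1}$ with no control over their relation to $\tau_1^{-1}(D_0)$ and the other prior Nash divisors. To save the induction you would need a strengthened statement carrying the prior Nash exceptional divisors and all containment data as extra inputs; once you do that, you are essentially forced into the paper's formulation, where \emph{all} the relations — the generator relations $\sum_j g^{m'}_{i,j}h^{m'}_j=0$, the singularity-containment conditions $v_if=\sum_j a^{m'}_{i,j}h^{m'}_j$, the normal-crossing containments, and the normal-crossing description (7)–(8) of the final function via a covering $\rho$ — are assembled into a single nested tower of Nash systems and solved simultaneously by the nested N\'eron approximation (Proposition \ref{prop-app}). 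The nested version is indispensable because the level-$(i+1)$ equations have the level-$i$ approximants as coefficients; a single-level N\'eron applied step by step does not guarantee compatibility. Note also that the paper explicitly remarks that the diffeomorphisms produced at successive stages do \emph{not} compose (``$\psi_1$ is not necessarily equal to $\tilde\psi_0$''), which is why the paper has to verify that the approximated final composite $\tilde F$ has normal crossing singularities through conditions (7), (8) and an implicit-function-theorem argument producing $\hat\rho$, rather than inheriting it for free as your induction assumes.

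In short: your proposal identifies the tools but underestimates exactly the two things the proof is hard about — the containment case of normal crossing, and the necessity of solving all levels of the tower simultaneously with nested N\'eron rather than one blowing-up at a time.
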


The proof of theorem \ref{Nresol} is the heart of the paper. It consists in a combination of algebra and topology, via a nested N\'eron  Desingularization Theorem 
(see Theorem 11.4, \cite{Sp}) and Nash Approximation Theorem. 
We proceed as follows. 
First we describe the analytic situation of the sequence of blowings-up in terms of ideals. 
Next we apply the nested version of N\'eron Desingularization Theorem and come down to a regular situation. 
Then, in the regular situation, the classical Nash Approximation Theorem enables to realize the 
approximation. 
The idea comes from the proof of Theorem 1.1 in \cite{CRS}, where the usual N\'eron Desingularization Theorem is 
used.

\vskip 5mm

\begin{demo} Consider the blowing-up $\pi_1:M_1\to M_0$ along center $C_0$. 
Let 
\begin{itemize}
\item $\mathcal I^0$ denote the sheaf of $\mathcal O$-ideals on $M_0$ defined by $C_0$, 
\item $h_0^0,...,h_{k_0}^0$ its 
global generators, 
\item $\mathcal A^0\subset\mathcal O_{M_0}^{k_0+1}$ the sheaf of relations of $h_0^0,...,h_{k_0}^0$, 
\item $g_1^0=(g_{1,0}^0,...,g_{1,k_0}^0),..., g_{k'_0}^0=(g_{k'_0,0}^0,...,g_{k'_0,k_0}^0)$ 
global generators of $\mathcal A^0$, 
\item $\xi_1^0,...,\xi_{s_0}^0$ generators of the ideal $I(\PP(k_0))$ of 
$\R[y_{i,j}^0]_{0\le i,j\le k_0}$ of functions vanishing on $\PP(k_0)$ in $\R^{(k_0+1)^2}$. 
\end{itemize}

Set $l_{i_1,i_2,i_3}^0(x,y^0)=y_{i_1,i_2}^0h_{i_3}^0(x)-y_{i_3.i_1}^0h_{i_2}^0(x)$ for $(x,y^0)\in M_0\times
\R^{(k_0+1)^2}$ and for $i_1,i_2,i_3=0,...,k_0$. 
Then 

\begin{equation}\tag{1}
\sum_{j=0}^{k_0}g_{i,j}^0(x)h_j^0(x)=0\quad\text{on}\ M_0\ \text{for}\ i=1,...,k'_0,
\end{equation}
and $M_1$ is generated by those
$l_{i_1,i_2,i_3}^0,\sum_{j=0}^{k_0}y_{j,i_1}^0g_{i_2,j}^0$ and
$\xi_i^0$, namely
$$M_1=\{(x,y^0)\in M_0\times\R^{(k_0+1)^2}:l_{i_1,i_2,i_3}^0(x,y^0)=0,\ i_1,i_2,i_3=0,\ldots,k_0,$$
$$\sum_{j=0}^{k_0}y_{j,i_1}^0g_{i_2,j}^0(x)=0,\,i_1=0,...,k_0,\,i_2=1,...,k'_0,\,\text{and}\ \xi_i^0(y^0)=0,
\,i=1,...,s_0\}.$$
%\endgather
Let $\{\alpha_i^1\}$ denote the generators. 
Note that $\pi_1$ is the restriction to $M_1$ of the projection $M_0\times\R^{(k_0+1)^2}\to M_0$. 
According to definition \ref{def1}, we assume that $C_0\subset\Sing f$, which is described as follows. 
Let $v_1,...,v_n$ be Nash vector fields on $M_0$ which span the tangent space of $M_0$ at each point of 
$M_0$. 
Then we see, as previously, that $C_0\subset\Sing f$ if and only if there exist $C^\omega$ functions $a_{i,j}^0$ on 
$M_0$, $i=1,...,n,\ j=0,...,k_0$, such that 
\begin{equation}\tag{2}
v_if=\sum_{j=0}^{k_0}a_{i.j}^0h_j^0\quad\text{on}\ M_0\ \text{for}\ i=1,...,n.
\end{equation}

Let $\tilde h_0^0,...,\tilde h_{k_0}^0,\ \tilde g_1^0,...,\tilde g_{k'_0}^0,\,\tilde a_{i,j}^0,\,i=1,...,n,
\,j=0,...,k_0$, be $C^\omega$ approximations of $h_0^0,...,h_{k_0}^0$, $ g_1^0,...,g_{k'_0}^0,\,a_{i,j}^0,\,
i=1,...,n,\,j=0,...,k_0$, respectively, in the strong Whitney $C^\infty$ topology such that 
\begin{enumerate}
\item[($\tilde 1$)] $\sum_
{j=0}^{k_0}\tilde g_{i,j}^0\tilde h_j^0=0$ for $i=1,...,k'_0$ and 
\item[($\tilde 2$)] $ v_if=\sum_{j=0}^{k_0}\tilde a_
{i,j}^0\tilde h_j^0$ for $i=1,...,n$. 
\end{enumerate}
Then by lemma \ref{pert}, the common zero set $\tilde C_0=Z(\tilde h_j^0)$ is smooth and of the same dimension as 
$C_0$, $\tilde g_1^0,...,\tilde g_{k'_0}^0$ are generators of the sheaf of relations $\tilde{\mathcal A^0}$ of 
$\tilde h_0^0,...,\tilde h_{k_0}^0$, and if we let $M_0\times\R^{(k_0+1)^2}\supset M_0\times\PP(k_0)\supset
\tilde M_1\xrightarrow{\tilde\pi_1} M_0$ denote the blowing-up of $M_0$ along center $\tilde C
_0$ defined by $\tilde h_0^0,...,\tilde h_{k_0}^0$ then there exist analytic diffeomorphisms $\psi_0$ of 
$M_0$ and $\tilde\psi_0:M_1\to\tilde M_1$ close to id in the strong Whitney $C^\infty$ topology such that 
$\psi_0(C_0)=\tilde C_0$ and $\tilde\pi_1\circ\tilde\psi_0=\psi_0\circ\pi_1$. Hence $f\circ\tilde\pi_1:
\tilde M_1\to\R$ is close to $f\circ\pi_1:M_1\to\R$ in the strong Whitney $C^\infty$ topology because if we 
regard $f$ as a function on $M_0\times\R^{(k_0+1)^2}$ then $f\circ\tilde\pi_1=f|_{\tilde M_1}$ and $f\circ
\pi_1=f|_{M_1}$. 
Moreover, $\tilde C_0\subset\Sing f$ by $(\tilde2)$ for $\tilde h_j^0$, and $I(\tilde M_1)$ is generated by 
$\tilde l_{i_1,i_2,i_3}^0(x,y^0)$ which is defined by $\tilde l_{i_1,i_2,i_3}^0(x,y^0)=y_{i_1,i_2}^0\tilde h_{i_3}^0(x)-y_{i_3.i_1}^0
\tilde h_{i_2}^0(x),\ \sum_{j=0}^{k_0}y_{j,i_1}^0\tilde g_{i_2,j}^0(x)$ and $\xi_i^0(y^0)$ in $C^\omega(M_0
\times\R^{(k_0+1)^2})$. 
Let $\tilde\alpha_i^1$ denote the generators corresponding to $\alpha_i^1$.

Consider the second blowing-up $\pi_2:M_2\to M_1$ along $C_1$. 
In the same way as for the first blowing-up we define 
\begin{itemize}
\item $\mathcal I^1\subset\mathcal O_{M_1}$,
\item $h_0^1,...,h_{k_1}^1
\in H^0(M_1,\mathcal I^1)$,
\item $\mathcal A^1\subset\mathcal O_{M_1}^{k_1+1}$,
\item $ g_1^1=(g_{1,0}^1,...,g_{1,k_1}^1),...,g_{k'_1}^1=(g_{k'_1,0}^1,...,g_{k'_1,k_1}^1)\in H^0(M_1,\mathcal A^1)$,
\item $\xi_1^1,...,\xi_{s_1}^1\in\R[y_{i,j}^1]_{0\le i,j\le k_1}$, 
\item $l_{i_1,i_2,i_3}^1(x,y^0,y^1)=y_{i_1,i_2}^1h_{i
_3}^1(x,y^0)-y_{i_3.i_1}^1h_{i_2}^1(x,y^0)$ for $(x,y^0,y^1)\in M_1\times\R^{(k_1+1)^2}$ and for $i_1,i_2,
i_3=0,...,k_1$,
\item $C^\omega$ functions $a_{i,j}^1$ on $M_1$ for $i=1,...,n,\ j=0,...,k_1$ 
\end{itemize}
so that 

\begin{equation}\tag{1}
\sum_{j=0}^{k_1}g_{i,j}^1(x,y^0)h_j^1(x,y^0)=0\quad\text{on}\ M_1\ \text{for}\ i=1,...,k'_1,
\end{equation}
\begin{equation}\tag{2}
v_if(x)=\sum_{j=0}^{k_1}a_{i.j}^1(x,y^0)h_j^1(x,y^0)\quad\text{on}\ M_1\ \text{for}\ i=1,...,n,
\end{equation}
$$M_2=\{(x,y^0,y^1)\in M_1\times\R^{(k_1+1)^2}:l_{i_1,i_2,i_3}^1(x,y^0,y^1)=0,\ i_1,i_2,i_3=0,\ldots,k_1,$$
$$\sum_{j=0}^{k_1}y_{j,i_1}^1g_{i_2,j}^1(x,y^0)=0,\,i_1=0,...,k_1,\,i_2=1,...,k'_1,\,\text{and}\ \xi_i^1(y^1
)=0,\,i=1,...,s_1\},$$
where $I(M_2)$ in $C^\omega(M_1\times\R^{(k_1+1)^2})$ is generated by those functions, denoted by $\{\alpha_i^2\}$, 
in the last braces, and $\pi_2$ is the restriction to $M_2$ of the projection $M_1\times\R^{(k_1+1)^2}\to M_
1$.

Here we require as another prescription of blowings-up that $C_1$ is normal crossing with $\pi^{-1}_1(C_0)$. 
For each $(x,y^0)\in C_1\cap\pi_1^{-1}(C_0)$ there are two possible cases to consider\,: $C_1$ is transversal to 
$\pi^{-1}_1(C_0)$ at $(x,y^0)$ or $C_{1(x,y^0)}\subset\pi_1^{-1}(C_0)_{(x,y^0)}$. 
Divide, if necessary, $C_1$ into two unions $C_1^1$ and $C_1^2$ of its connected components so that on each union, only one case 
arises, and regard $\pi_1:M_1\to M_0$ as a composition $\pi_1=\pi_1^2
\circ \pi_1^1$, where $\pi_1^1$ is the blowing-up along center $C_1^1$
and $\pi_1^2$ is the blowing-up along center ${\pi_1^1}^{-1}(C_1^2)$.
Then we can assume that globally $C_1$ is transversal to $\pi_1^{-1}(C_0)$ or $C_1\subset\pi_1^{-1}(C_0)$. 
The latter case occurs if and only if there exist $C^\omega$ functions $b_{j_0,j_1}^1$ on $M_1$, $j_0=0,...,
k_0,\ j_1=0,...,k_1$, such that

\begin{equation}\tag{3}
h_{j_0}^0(x)=\sum_{j_1=0}^{k_1}b_{j_0,j_1}^1(x,y^0)h_{j_1}^1(x,y^0)\quad\text{on}\ M_1\ \text{for}\ j_0=0,...,
k_0.
\end{equation}
We extend $h_j^1,g_{i,j}^1$, $a_{i,j}^1$ and $b_{j_0,j_1}^1$ (if they
exist) to analytic functions on $M_0\times
\R^{(k_0+1)^2}$. 
For simplicity we use the same notations for the extensions. 
Then (1), (2) and (3) become
\begin{equation}\tag{1}
\sum_jg_{i,j}^1h_j^1=\sum_{i_1}\beta_{1,i_1,i}^1\alpha_{i_1}^1\quad\text{on}\ M_0\times\R^{(k_0+1)^2},
\end{equation}
\begin{equation}\tag{2}
v_if=\sum_ja_{i.j}^1h_j^1+\sum_{i_2}\beta_{2,i_2,i}^1\alpha_{i_2}^1\quad\text{on}\ M_0\times\R^{(k_0+1)^2},
\end{equation}
\begin{equation}\tag{3}
h_{j_0}^0=\sum_{j_1}b_{j_0,j_1}^1h_{j_1}^1+\sum_{i}\beta_{3,i,j_0}^1\alpha_{i}^1\quad\text{on}\ M_0\times\R^
{(k_0+1)^2}
\end{equation}
for some $C^\omega$ functions $\beta_{i_1,i_2,i_3}^i$ on $M_0\times\R^{(k_0+1)^2}$. 
Now $Z(h_j^1)\cap M_1=C_1$, and $I(M_2)$ in $C^\omega(M_0\times\R^{(k_0+1)^2}\times\R^{(k_1+1)^2})$ is 
generated by $\alpha_i^1$ and $\alpha_i^2$, which we naturally regard as $C^\omega$ functions on $M_0
\times\R^{(k_0+1)^2}\times\R^{(k_1+1)^2}$.

For the second blowing-up, we consider again $C^\omega$ approximations $\tilde h_0^1,...,\tilde h_{k_1}^1$, $\tilde g_1^1=(\tilde g_{1,0}^1,...,\tilde g^1_{1,k_1}),...,\tilde g_{k'_1}^1=(\tilde g_{k'_1,0}^1,...,
\tilde g_{k'_1,k_1}^1),\,\tilde a_{i,j}^1,\,i=1,...,n,\,j=0,...,k_1$, $\tilde b_{j_0,j_1}^1,\,j_0=0,...,k_0$,
$j_1=0,...,k_1$ and $\tilde\beta_{i_1,i_2,i_3}^1$ of $h_0^1,...,h_{k_1}^1,\,g_1^1=(g_{1,0}^1,...,g^1_{1,k_1}),
...,g_{k'_1}^1=(g_{k'_1,0}^1, ...,g_{k'_1,k_1}^1),\,a_{i,j}^1$, $b_{j_0,j_1}^1$ (if exist) and 
$\beta_{i_1,i_2,i_3}^1$ on $M_0\times\R^{(k_0+1)^2}$ in the strong Whitney $C^\infty$ topology such that
\begin{enumerate}
\item[($\tilde 1$)] $\sum_j\tilde g_{i,j}^1\tilde h_j^1=\sum_{i_1}\tilde\beta_{1,i_1,i}^1\tilde\alpha_{i_1}^1$ on $M_0
\times\R^{(k_0+1)^2}$ for $i=1,...,k'_1$, 
\item[($\tilde 2$)] $v_if=\sum_j\tilde a_{i,j}^1\tilde h_j^1+\sum_{i_2}
\tilde\beta_{2,i_2,i}^1\tilde\alpha_{i_2}^1$ on $M_0\times\R^{(k_0+1)^2}$ for $i=1,...,n$, 
\item[($\tilde 3$)] 
$\tilde h_{j_0}^0=\sum_{j_1}\tilde b_{j_0,j_1}^1\tilde h_{j_1}^1+\sum_{i}\tilde\beta_{3,i,j_0}^1\tilde\alpha_
{i}^1$ on $M_0\times\R^{(k_0+1)^2}$ for $j_0=0,...,k_0$. 
\end{enumerate}
Then $\tilde C_1=Z(\tilde h_j^1)\cap\tilde M_1$ is smooth and of the same dimension as $C_
1$. 
If $C_1\subset\pi_1^{-1}(C_0)$, then $\tilde C_1$ is contained in
$\tilde \pi_1^{-1}(\tilde C_0)$ by (3) and $(\tilde3)$. 
If $C_1\not\subset\pi_1^{-1}(C_0)$, i.e. if $C_1$ is transversal to $\pi_1^{-1}(C_0)$ in $M_1$, $\tilde C_1$ 
is transversal to $\tilde\pi_1^{-1}(\tilde C_0)$ in $\tilde M_1$ because the above diffeomorphism $\tilde\psi_0
:M_1\to\tilde M_1$ is close to id in the strong Whitney $C^\infty$ topology and carries $\pi_1^{-1}(C_0)$ to 
$\tilde\pi_1^{-1}(\tilde C_0)$ and because $\tilde C_1$ is close to $C_1$ in the strong Whitney $C^\infty$ 
topology. 
Hence, in any case $\tilde C_1$ is normal crossing with $\tilde\pi_1^{-1}(\tilde C_0)$. 
It also follows from $(\tilde2)$ that $\tilde C_1\subset\Sing f=\tilde\pi_1^{-1}(\Sing f)$. 
Thus we can take the blowing-up $\tilde\pi_2:\tilde M_2\to\tilde M_1$ of $\tilde M_1$ along center $\tilde C_1$, 
and embed $\tilde M_2$ by $\tilde h_0^1,...,\tilde h_{k_1}^1$ into $\tilde M_1\times\PP(k_1)\subset\tilde M_1
\times\R^{(k_1+1)^2}\subset M_0\times\R^{(k_0+1)^2}\times\R^{(k_1+1)^2}$ so that $\tilde\pi_2$ is the restriction 
to $\tilde M_2$ of the projection $\tilde M_1\times\R^{(k_1+1)^2}\to\tilde M_1$. 
Then there exist analytic diffeomorphisms $\psi_1:M_1\to\tilde M_1$ and $\tilde\psi_1:M_2\to\tilde M_2$ close 
to id in the strong Whitney $C^\infty$ topology ($\psi_1$ is not necessarily equal to $\tilde\psi_0$) such that 
$\psi_1(C_1)=\tilde C_1$ and $\tilde\pi_2\circ\tilde\psi_1=\psi_1\circ\pi_2$; $f\circ\tilde\pi_1\circ\tilde\pi_
2:\tilde M_2\to\R$ is close to $f\circ\pi_1\circ\pi_2:M_2\to\R$ in the strong Whitney $C^\infty$ topology; $I(
\tilde M_2)$ is generated in $C^\omega(M_0\times\R^{(k_0+1)^2}\times\R^{(k_1+1)^2})$ by $\tilde l_{i_1,i_2,i_3}
(x,y^0,y^1)=y_{i_1,i_2}^1\tilde h_{i_3}^1(x,y^0)-y_{i_3,i_1}^1\tilde h_{i_2}^1(x,y^0)$, 
$\sum_{j=0}^{k_1}y_{j,i_1}^1\tilde g_{i_2,j}^1(x,y^0)$, $\xi_i^1(y^1)$ and $\tilde\alpha_i^1$. 
Let $\tilde\alpha_i^2$ denote the former generators, and let $\tilde\alpha_i^1$ be naturally extended to $M_0
\times\R^{(k_0+1)^2}\times\R^{(k_1+1)^2}$.

Note that there exists a $C^\omega$ diffeomorphism from $M_2$ to $\tilde M_2$ close to id in the strong Whitney 
$C^\infty$ topology and carrying $\pi_2^{-1}(C_1)\cup(\pi_1\circ\pi_2)^{-1}(C_0)$ to $\tilde\pi_2^{-1}(\tilde C
_1)\cup(\tilde\pi_1\circ\tilde\pi_2)^{-1}(\tilde C_0)$ for the following reason. 
First by lemma 4.3 in \cite{FS}, we have a $C^\omega$ diffeomorphism from $M_1$ to $\tilde M_1$ close to id 
in the strong Whitney $C^\infty$ topology and carrying $C_1\cup\pi_1^{-1}(C_0)$ to $\tilde C_1\cup\tilde\pi_1^
{-1}(\tilde C_0)$. 
Hence we can assume that $C_1\cup\pi_1^{-1}(C_0)=\tilde C_1\cup\tilde\pi_1^{-1}(\tilde C_0)$. 
Then in the same way as in the proof of lemma \ref{pert} we construct a $C^\omega$ diffeomorphism $\eta:M_2\to\tilde M_2$ 
close to id in the strong Whitney $C^\infty$ topology such that $\tilde\pi_2\circ\eta=\pi_2$ and hence $\eta(
\pi_2^{-1}(C_1)\cup(\pi_1\circ\pi_2)^{-1}(C_0))=\eta(\pi_2^{-1}(C_1\cup\pi_1^{-1}(C_0)))=\tilde\pi_2^{-1}(C_1
\cup\pi_1^{-1}(C_0))=\tilde\pi_2^{-1}(\tilde C_1)\cup(\tilde\pi_1\circ\tilde\pi_2)^{-1}(\tilde C_0)$.

We repeat the same arguments inductively on each blowing-up. 
Then condition (3) becomes somewhat complicated because the union of the inverse images of the previous 
centers is not necessarily smooth. 
Let us consider the center $C_2$ of the blowing-up $\pi_3:M_3\to M_2$. 
We describe the condition that $C_2$ is normal crossing with $A=\pi_2^{-1}(C_1)\cup(\pi_
1\circ\pi_2)^{-1}(C_0)$ as follows. 
Let $h_j^2,\,g_j^2=(g_{j,0}^2,...,g_{j,k_2}^2),\,\xi_i^2,\,a_{i,j}^2,\,\beta_{i_1,i_2,i_3}^2$ on $M_0\times\R^
{(k_0+1)^2}\times\R^{(k_1+1)^2}$ and their $C^\omega$ approximations $\tilde h_j^2,\,\tilde g_j^2=(\tilde g_{j
,0}^2,...,\tilde g_{j,k_2}^2),\,\tilde\xi_i^2,\,\tilde a_{i,j}^2,\,\tilde\beta_{i_1,i_2,i_3}^2$ in the strong 
Whitney $C^\infty$ topology be given for the center $C_2$ so that the corresponding equalities to (1), 
($\tilde 1$), (2) and ($\tilde2$) hold. 
Set $Y=\overline{A-\pi_2^{-1}(C_1)}$. 
Then $Y$ is a smooth analytic set of codimension 1 in $M_2$. If $C_1\not\subset\pi_1^{-1}(C_0)$ then $Y=A$; 
if $C_1\subset\pi_1^{-1}(C_0)$ then $\pi_2^{-1}(C_1)\subset(\pi_1\circ\pi_2)^{-1}(C_0)$ and $Y\cup\pi_2^{-1}
(C_1)$ is a decomposition of $(\pi_1\circ\pi_2)^{-1}(C_0)$ to smooth analytic sets, it follows from the 
normal crossing property of $C_2$ with $A$ that $C_2$ is normal crossing with $\pi_2^{-1}(C_1)$ and with $Y$ 
(the converse is not necessarily correct), and $I(Y)$ in
$C^\omega(M_2)$ coincides with 
$$\{g\in C^\omega(M_2):
gh_i^1=\sum_{j=0}^{k_0}c_{i,j}h_j^0 \textrm{ for some } c_{i,j}\in C^\omega(M_2),\ i=0,...,k_1\}.$$ 
Let $\chi_j^{0,2},\ j=1,...,t_2$, be generators of $I(Y)$. 
Then there exist $C^\omega$ functions $c_{j_0,j_1,j}^{0,2}$ on $M_2,\ j_0=0,...,k_0,\ j_1=1,...,k_1,\ j=1,...,
t_2$, such that 
\begin{equation}\tag{4}
\chi_j^{0,2}(x,y^0,y^1)h_{j_1}^1(x,y^0)=\sum_{j_0=0}^{k_0}c_{j_0,j_1,j}^{0,2}(x,y^0,y^1)h_{j_0}^0(x)\quad\text
{on}\ M_2,
\end{equation}
and as in the case of the second blowing-up, dividing $C_2$ if
necessary we can assume that $C_2$ is transversal to $\pi_2
^{-1}(C_1)$ or contained in $\pi_2^{-1}(C_1)$ and that $C_2$ is transversal to $Y$ or contained in $Y$. 
If $C_2\subset\pi_2^{-1}(C_1)$ then there exist $C^\omega$ functions $b_{j_1,j_2}^2$ on $M_2$, $j_1=0,...,k_1,
\ j_2=0,...,k_2$, such that 
\begin{equation}\tag{3}
h_{j_1}^1(x,y^0)=\sum_{j_2=0}^{k_2}b_{j_1,j_2}^2(x,y^0,y^1)h_{j_2}^2(x,y^0,y^1)\quad\text{on}\ M_2,
\end{equation}
and if $C_2\subset Y$ then there exist $C^\omega$ functions $d_{j,j_2}^{0,2}$ on $M_2,\ j=1,...,t_2,\ j_2=0,...
,k_2$, such that 
\begin{equation}\tag{5}
\chi_j^{0,2}(x,y^0,y^1)=\sum_{j_2=0}^{k_2}d_{j,j_2}^{0,2}(x,y^0,y^1)h_{j_2}^2(x,y^0,y^1)\quad\text{on}\ M_2. 
\end{equation}
As before we assume that $\chi_j^{0,2},\,b_{j_1,j_2}^2,\,c_{j_1,j_2}^{0,2},\,d_{j,j_2}^{0,2}$ are defined on $M_0
\times\R^{(k_0+1)^2}\times\R^{(k_1+1)^2}$. 
Then there exist $C^\omega$ functions $\gamma_{i_1,i_2,i_3}^i$ and $\gamma_{i_1,i_2,i_3,i_4}^i$ on $M_0\times
\R^{(k_0+1)^2}\times\R^{(k_1+1)^2}$ such that 
\begin{equation}\tag{3}
h_{j_1}^1=\sum_{j_2}b_{j_1,j_2}^2h_{j_2}^2+\sum_i\gamma_{1,i,j_1}^1\alpha_i^1+\sum_i\gamma_{1,i,j_1}^2\alpha_i
^2, 
\end{equation}
\begin{equation}\tag{4}\chi_j^{0,2}h_{j_1}^1=\sum_{j_0}c_{j_0,j_1,j}^{0,2}h_{j_0}^0+\sum_i\gamma_{2,i,j,j_1}^1\alpha_i^1+\sum_i\gamma
_{2,i,j,j_1}^2\alpha_i^2,
\end{equation} 
\begin{equation}\tag{5}\chi_j^{0,2}=\sum_{j_2}d_{j,j_2}^{0,2}h_{j_2}^2+\sum_i\gamma_{3,i,j}^1\alpha_i^1+\sum_i\gamma_{3,i,j}^2\alpha_
i^2\quad\text{on}\ M_0\times\R^{(k_0+1)^2}\times\R^{(k_1+1)^2}.
\end{equation} 

We need to consider also $C^\omega$ approximations $\tilde\chi_j^{0,2},\,\tilde b_{j_1,j_2}^2,\,\tilde c_{j_
1,j_2,j}^{0,2},\,\tilde d_{j,j_2}^{0,2},\,\tilde\gamma_{i_1,i_2,i_3}^i$ and $\tilde\gamma_{i_1,i_2,i_3,i_4}^i$ 
of $\chi_j^{0,2},\,b_{j_1,j_2}^2$ (if they exist),$\,c_{j_1,j_2,j}^{0,2},\,d_{j,j_2}^{0,2}$ (if exist), $\gamma_{i_
1,i_2,i_3}^i$ (if exist) and $\gamma_{i_1,i_2,i_3,i_4}^i$ on $M_0\times\R^{(k_0+1)^2}\times\R^{(k_1+1)^2}$ in 
the strong Whitney $C^\infty$ topology such that 
\begin{enumerate}
\item[($\tilde 3$)] $\tilde h_{j_1}^1=\sum_{j_2=0}^{k_2}\tilde b_{j_1,
j_2}^2\tilde h_{j_2}^2+\sum_i\tilde\gamma_{1,i,j_1}^1\tilde\alpha_i^1+\sum_i\tilde\gamma_{1,i,j_1}^2\tilde\alpha
_i^2,$ 
\item[($\tilde 4$)] $\tilde\chi_j^{0,2}\tilde h_{j_1}^1=\sum_{j_0}\tilde c_{j_0,j_1,j}^{0,2}\tilde h_{j_0}^0+\sum
_i\tilde\gamma_{2,i,j,j_1}^1\tilde\alpha_i^1+\sum_i\tilde\gamma_{2,i,j,j_1}^2\tilde\alpha_i^2$,
\item[($\tilde 5$)]$ \tilde\chi_j^{0,2}=\sum_{j_2}\tilde d_{j,j_2}^{0,2}\tilde h_{j_2}^2+\sum_i\tilde\gamma_{3,i,j}^1
\tilde\alpha_i^1+\sum_i\tilde\gamma_{3,i,j}^2\tilde\alpha_i^2$ on $M_0\times\R^{(k_0+1)^2}\times\R^{(k_1+1)^2}$. 
\end{enumerate}
Set $\tilde Y=Z(\tilde\chi_j)\cap\tilde M_2$. 
Then 
$$\tilde Y=\overline{(\tilde\pi_1\circ\tilde\pi_2)^{-1}(\tilde C_0)-\tilde\pi_2^{-1}}
\overline{(\tilde C_1)}$$ 
because $\tilde Y$ contains the right hand side by ($\tilde4$) and because the opposite inclusion follows from the facts 
that $Y$ and the right hand side are smooth and of codimension 1 in $M_2$ and in $\tilde M_2$, respectively, and that $\chi_j$ 
are generators of $I(Y)$ in $C^\omega(M_2)$. 
Hence $\tilde\pi_2^{-1}(\tilde C_1)\cup(\tilde\pi_1\circ\pi_2)^{-1}(\tilde C_0)$, which is normal crossing, is 
the union of the smooth analytic sets $\tilde\pi_2^{-1}(\tilde C_1)$ and $\tilde Y$. 
Moreover, $\tilde C_2$ is normal crossing with $\tilde\pi_2^{-1}(\tilde C_1)\cup(\tilde\pi_1\circ\pi_2)^{-1}(
\tilde C_0)$ for the following four reasons. 

If $C_2$ is transversal to $\pi_2^{-1}(C_1)$ or to $Y$, so is $\tilde C_2$ to $\tilde\pi_2^{-1}(\tilde C_1)$ 
or to $\tilde Y$, respectively, by the same reason as before. 
If $C_2\subset\pi_2^{-1}(C_1)$, then there exist $C^\omega$ functions $b_{j_1,j_2}^2$ with (3) on $M_2$, hence $\tilde h
_{j_1}^1=\sum_{j_2=0}^{k_2}\tilde b_{j_1,j_2}^2\tilde h_{j_2}^2$ on $\tilde M_2$ and $\tilde C_2\subset\tilde\pi
_2^{-1}(\tilde C_1)$. 
In the same way we see that if $C_2\subset Y$ then $\tilde C_2\subset\tilde Y$. 
The fourth consideration is that $C_2$ is normal crossing with $\pi_2^{-1}(C_1)\cup(\pi_1\circ\pi_2)^{-1}(C_0)$. 

By these four properties we can find also a $C^\omega$ diffeomorphism from $M_2$ to $\tilde M_2$ close to id in 
the strong Whitney $C^\infty$ topology and carrying $C_2,\ \pi_2^{-1}(C_1)$ and $(\pi_1\circ\pi_2)^{-1}(C_0)$ 
to $\tilde C_2,\ \tilde\pi_2^{-1}(\tilde C_1)$ and $(\tilde\pi_1\circ\tilde\pi_2)^{-1}(\tilde C_0)$, 
respectively.

Let $1<m'\,(<m)\in\N$. 
As above we inductively embed $M_{m'}$ into $M_{m'-1}\times\R^{(k_{m'-1}+1)^2}(\subset M_0\times\R^{(k_0+1)^2}
\times\cdots\times\R^{(k_{m'-1}+1)^2})$ and obtain a finite number of $C^\omega$ functions on $M_0\times\cdots
\times\R^{(k_{m'-1}+1)^2}$, namely $h_j^{m'},\,a_{i,j}^{m'},\,\xi_i^{m'},\,\chi_j^{m'',m'},\,c_{j_0,j_1,j}^{m'',m'},\,
d_{j,j_1}^{m'',m'},\,\alpha_i^{m'},\,\beta_{i_1,i_2,i_3}^{m'',m'},\,\beta_{i_1,
i_2,i_3,i_4}^{m''',m'',m'}$, $\beta_{i_1,i_2,i_3}^{m''',m'',m'}$ for $m''\,(<m'),m'''\,(\le m')
\in\N$ and a finite number of $C^\omega$ maps from $M_0\times\cdots\times\R^{(k_{m'-1}+1)^2}$ to $\R^{(k_{m'}+
1)^2}$, namely $g_j^{m'}=(g_{j,0}^{m'},...,g_{j,k_{m'}}^{m'})$ such that the following conditions are satisfied:
\begin{itemize}
\item the blowing-up $\pi_{m'}:M_{m'}\to M_{m'-1}$ is the restriction to $M_{m'}$ of the projection $M_{m'-1}\times\R
^{(k_{m'-1}+1)^2}\to M_{m'-1}$;
\item  $\{h_j^{m'}:j\}$ are generators of $I(C_{m'})$ in $C^\omega(M_{m'})$; 
\item $\{\xi_i^
{m'}(y^{m'}):i\}$ are generators of $I(\PP(k_{m'}))$ in $\R[y_{i,j}^{m'}]_{0\le i,j\le k_{m'}}\,(\subset C^\omega
(\R^{(k_{m'}+1)^2}))$; 
\item $\{g_j^{m'}:j\}$ are generators of the sheaf of relations of $h_0^{m'},...,h_{k_{m'}}^{m'
}$ on $M_{m'}$;
\item  $\{\chi_j^{m'',m'}:j\}$ are generators of $I(Y_{m'',m'})$ in $C^\omega(M_{m'})$, where $Y_{m'-1,
m'}=\pi_{m'}^{-1}(C_{m'-1})$ and 
$$Y_{m'',m'}\!=\!\overline{(\pi_{m''+1}\circ\cdots\circ\pi_{m'})^{-1}(C_{m''})-
(\pi_{m''+2}\circ\cdots\circ\pi_{m'})^{-1}(C_{m''+1})-\cdots-\pi_{m'}^{-1}(C_{m'-1})}
$$
for $m''<m'-1$; $\{\alpha_i^{m'}\!:\!i\}\!=\!\{y_{i_1,i_2}^{m'-1}h_{i_3}^{m'-1}-y_{i_3,i_1}^{
m'-1}h_{i_2}^{m'-1},\,\sum_jy_{j,i_1}^{m'-1}g_{i_2,j}^{m'-1},\,\xi_i^{m'-1}:i_1,i_2,i_3,i\}$;
\item  $\{\alpha_i^{m'''
}:m'''\le m',\,i\}$ are generators of $I(M_{m'})$ in $C^\omega(M_0\times\cdots\times\R^{(k_{m'-1}
+1)^2})$, where we naturally regard $h_i^{m'-1},\,g_{i,j}^{m'-1},\,\xi_i^{m'-1}$ and $\alpha_i^{m'''}$ as 
functions on $M_0\times\cdots\times\R^{(k_{m'-1})^2}$;
\item  (1) $\sum_jg_{i,j}^{m'}h_j^{m'}\!=\sum_{_{\substack{m'''\le m'\\ 
{i_1}}}}\beta_{1,i_1,i}^{m''',m'}\alpha_{i_1}^{m'''}$ on $M_0\times\cdots\times\R^{(k_{m'-1}+1)^2}$;
\item  (2) $v_if=
\sum_ja_{i,j}^{m'}h_j^{m'}+\sum_{_{\substack{m'''\le m'\\ i_2}}}\beta_{2,i_2,i}^{m''',m'}\alpha_{i_2}^{
m'''}$ on $M_0\times\cdots\times\R^{(k_{m'-1}+1)^2}$;
\item  (4) $\chi_j^{m'',m'}h_{j_1}^{m'-1}=\sum_{j_0}c_{j_0,j_1,j
}^{m'',m'}\chi_{j_0}^{m'',m'-1}+\sum_{_{\substack{m'''\le m'\\ i}}}\beta_{4,i,j,j_1}^{m''',m'',m'}\alpha_i^{m'''}$ on 
$M_0\times\cdots\times\R^{(k_{m'-1}+1)^2}$ for $m''<m'-1$; 
\item (5) $\chi_j^{m'',m'}\!=\!\sum_{j_1}d_{j,j_1}^{m'',m'}h_{j_1}^{m'}+\sum_{_{\substack{m'''\le m'
\\ i}}}\beta_{5,i,j}^{m''',m'',m'}\alpha_i^{m'''}$ on $M_0\times\cdots\times\R^{(k_{m'-1}+1)^2
}$ for $m''<m'-1$ if $C_{m'}\subset Y_{m'',m'}$; 
\item (6) $\chi_j^{m'-1,m'}=h_j^{m'-1}$ on $M_0\times\cdots
\times\R^{(k_{m'-1}+1)^2}$. 
\end{itemize}
(Condition (3) is included in (5) and (6).) 
Here $d_{j,j_1}^{m'',m'}$ and $\beta_{5,i,j}^{m''',m'',m'}$ exist only if $C_{m'}\subset Y_{m'',m'}$ and we 
assume that if $C_{m'}\not\subset Y_{m'',m'}$ then $C_{m'}$ is transversal to $Y_{m'',m'}$ in $M_{m'}$. 
Note that $\cup_{m''<m'}Y_{m'',m'}$ is a decomposition of $\pi_{m'}^{-1}(C_{m'-1})\cup\cdots
\cup(\pi_1\circ\cdots\circ\pi_{m'})^{-1}(C_0)$ into smooth analytic sets.

Assume, inductively, that there exist a blowing-up $M_0\times\cdots\times\R^{(k_{m'-1}+1)^2}\supset\tilde M
_{m'}\xrightarrow{\tilde\pi_{m'}} \tilde M_{m'-1}$ along center $\tilde C_{m'-1}$ 
close to $M_0\times\cdots\times\R^{(k_{m'-1}+1)^2}\supset M_{m'}\xrightarrow{\pi_{m'}} M_{m'-1}$ in 
the strong Whitney $C^\infty$ topology and $C^\omega$ approximations 
$$\tilde h_j^{m'},\,\tilde a_{i,j}^{m'},
\tilde\chi_j^{m'',m'},\,\tilde c_{j_0,j_1,j}^{m'',m'},\,\tilde d_{j,j_1}^{m'',m'},\,\tilde\alpha
_i^{m'}, \tilde\beta_{i_1,i_2,i_3}^{m'',m'}, \tilde\beta_{i_1,i_2,i_3,i_4}^{m''',m'',m'},\tilde\beta_{i_1,i_2,i_3}
^{m''',m'',m'},\tilde g_j^{m'}\!=\!(\tilde g_{j,0}^{m'},...,\!\tilde g_{j,k_{m'}}^{m'})$$ 
of 
$$h_j^{m'},\,a_{i,
j}^{m'},\,\chi_j^{m'',m'},\,c_{j_0,j_1,j}^{m'',m'},\,d_{j,j_1}^{m'',m'},\,\alpha_i^{m'}, \beta_{i_1,i_2,i_3}^
{m'',m'}, \beta_{i_1,i_2,i_3,i_4}^{m''',m'',m'},\beta_{i_1,i_2,i_3}^{m''',m'',m'},g_j^{m'}\!=\!(g_{j,0}^
{m'},...,g_{j,k_{m'}}^{m'})$$
on $M_0\times\cdots\times\R^{(k_{m'-1}+1)^2}$ in the strong Whitney $C^\infty$ 
topology such that $\tilde\pi_{m'}$ is the restriction to $\tilde M_{m'}$ of the projection $M_0\times\cdots
\times\R^{(k_{m'-1}+1)^2}\to M_0\times\cdots\times\R^{(k_{m'-2}+1)^2}$, 
$$\{\tilde\alpha_i^{m'}:i\}=\{y_{i_1,i_2}
^{m'-1}\tilde h_{i_3}^{m'-1}-y_{i_3,i_1}^{m'-1}\tilde h_{i_2}^{m'-1},\,\sum_jy_{j,i_1}^{m'-1}
\tilde g_{i_2,j}^{m'-1},\,\xi_i^{m'-1}:i_1,i_2,i_3,i\}$$ and the corresponding conditions $(\tilde1),\,(\tilde2),
\,(\tilde4),\,(\tilde5)$ (if $C_{m'}\subset Y_{m'',m'}$) and $(\tilde6$) to (1), (2), (4), (5) (if $C_{m'}
\subset Y_{m'',m'}$) and (6) are satisfied. 
Set $\tilde C_{m'}=Z(\tilde h_j^{m'})\cap\tilde M_{m'}$, $\tilde Y_{m'-1,m'}=\tilde\pi_{m'}^{-1}(\tilde C_{m'-1
})$ and 
$$\tilde Y_{m'',m'}=\overline{(\tilde\pi_{m''-1}\circ\cdots\circ\tilde\pi_{m'})^{-1}(\tilde C_{m''})-
\tilde(\pi_{m''}\circ\cdots\circ\tilde\pi_{m'})^{-1}(\tilde C_{m''+1})-\cdots-
\tilde\pi_{m'}^{-1}(\tilde C_{m'-1})}$$ for $m''<m'-1$. 
Then, as above, we have:
\begin{itemize}
\item $I(\tilde M_{m'})$ in $C^\omega(M_0\times\cdots\times\R^{(k_{m'-1}+1)^2})$ is generated by 
$\{\tilde\alpha_i^{m''}:m''\le m',\,i\}$; 
\item $\tilde C_{m'}$ is smooth and of the same dimension as $C_{m'}$; 
\item  $I(\tilde C_{m'})$ in $C^\omega(\tilde M_{m'})$ is generated by $\{\tilde h_j^{m'}:j\}$; 
\item  $\{\tilde g_j^{m'}:j
\}$ are generators of the sheaf of relations of $\tilde h_0^{m'},...,\tilde h_{k_{m'}}^{m'}$ on $\tilde M_{m'}
$; 
\item  $I(\tilde Y_{m'',m'})$ in $C^\omega(\tilde M_{m'})$ for each $m''<m'$ is generated by $\{\tilde\chi_j^{m'',
m'}:j\}$ by ($\tilde4$); 
\item  $\tilde C_{m'}\subset\tilde Y_{m'',m'}$ if and only if $C_{m'}\subset Y_{m'',m'}$;  
\item if 
$C_{m'}\not\subset Y_{m'',m'}$ then $\tilde C_{m'}$ is transversal to $\tilde Y_{m'',m'}$ in $\tilde M_{m'}$; 
\item  
$\cup_{m''<m'}\tilde Y_{m'',m'}$ is a decomposition of $\tilde\pi_{m'}^{-1}(\tilde C_{m'-1})\cup\cdots\cup(
\tilde\pi_1\circ\cdots\circ\tilde\pi_{m'})^{-1}(\tilde C_0)$ into smooth analytic sets;  
\item $\tilde C_{m'}$ is 
normal crossing with this set;  
\item there exists a $C^\omega$ diffeomorphism from $M_{m'}$ to $\tilde M_{m'}$ close 
to id in the strong Whitney $C^\infty$ topology and carrying $C_{m'},...,(\pi_1\circ\cdots\circ\pi_{m'})^{-1}(
C_0)$ to $\tilde C_{m'},...,(\tilde\pi_1\circ\cdots\circ\tilde\pi_{m'})^{-1}(\tilde C_0)$, respectively;  
\item $f
\circ\tilde\pi_1\circ\cdots\circ\tilde\pi_{m'}$ is close to $f\circ\pi_1\circ\cdots\circ\pi_{m'}$ in the strong 
Whitney $C^\infty$ topology. 
\end{itemize}

Finally, as above, we embed $M_m$ and $\tilde M_m$ into $M_0\times\R^{(k_0+1)^2}\times\cdots\times\R^{(k_{m-1}
+1)^2}$ by $h_0^{m-1},...,h_{k_{m-1}}^{m-1}$ and $\tilde h_0^{m-1},...,\tilde h_{k_{m-1}}^{m-1}$, respectively, 
define $\alpha_i^m,\tilde\alpha_i^m,Y_{m',m}$ and $\tilde Y_{m',m}$ for $0\le m'<m$, and let $\{\chi_j^{m',m}:j\}$ 
and $\{\tilde\chi_j^{m',m}:j\}$ be finitely many $C^\omega$ functions on $M_0\times\cdots\times\R^{(k_{m-1}+1)^2}$ 
which are generators of $I(Y_{m',m})$ in $C^\omega(M_m)$ and of $I(\tilde Y_{m',m})$ in $C^\omega(\tilde M_m)$, 
respectively, for each $m'<m$ such that each $\tilde\chi_j^{m',m}$ is close to $\chi_j^{m',m}$ in the strong 
Whitney $C^\infty$ topology. 
Then there exists a $C^\omega$ diffeomorphism $\psi_m:M_m\to\tilde M_m$ close to id in the strong Whitney $C^
\infty$ topology carrying $\pi_m^{-1}(C_{m-1}),...,(\pi_1\circ\cdots\circ\pi_m)^{-1}(C_0)$ to $\tilde\pi_m^{-1}(
\tilde C_{m-1}),...,(\tilde\pi_1\circ\cdots\circ\tilde\pi_m)^{-1}(\tilde C_0)$, respectively. 
Set $F=f\circ\pi_1\circ\cdots\circ\pi_m$ and $\tilde F=f\circ\tilde\pi_1\circ\cdots\circ\tilde\pi_m$. 
Then $F$ has only normal crossing singularities. 
We require $\psi_m$ to carry, moreover, $\Sing F$ to $\Sing\tilde F$. 
That is possible if $\tilde F$ has only normal crossing singularities by the same reason as before.

We will describe a condition for $\tilde F$ to have only normal crossing singularities. 
As the problem in the theorem is local around the compact subset $X$ of $M$, we assume that $M_m$ is covered by a 
finite number of good open subsets in the following sense. 
We have the disjoint union $B$ of finitely many closed balls $B_i$ in the Euclidean space of same dimension as $M$, 
a $C^\omega$ immersion $\rho=(\rho_{-1},...,\rho_{m-1}):B\to M_0\times\R^{(k_0+1)^2}\times\cdots\times\R^{(k_{m-1}
+1)^2}$, finitely many $C^\omega$ functions $\delta_{i,j}$ on each $B_i$ regular at $\delta_{i,j}^{-1}(0)$ and 
$\theta_{i,j}>0\in\N$ such that $\Ima \rho\subset M_m,\ \rho(\Int B)\supset X$, for each $i$ $\rho|_{B_i}$ is an 
embedding, $F\circ\rho|_{\Int B_i}$ has only normal crossing singularities with unique singular value $z_{0i}$, and 
$$
F\circ\rho|_{B_i}=\prod_j\delta_{i,j}^{\theta_{i,j}}+z_{0i}. 
$$
Here the condition $\Ima\rho\subset M_m$ and the last condition are equivalent to
\begin{equation}\tag{7}
f\circ\rho_{-1}|_{B_i}=\prod_j\delta_{i,j}^{\theta_{i,j}}+z_{0i}
\end{equation}
and there exist $C^\omega$ functions $\nu_{i,j}^{m'}$ on $M_0\times\cdots\times\R^{(k_{m-1}+1)^2}\times B$ such 
that for each $\alpha_i^{m'}$ with $m'\le m$ 
\begin{equation}\tag{8}
\alpha_i^{m'}(x,y^0,...,y^{m'-1})  = \nu_{i,-1}^{m'}(x,y^0,...,y^{m-1},z)(x-\rho_{-1}(z))+\end{equation}
$$\sum_{j=0}^{m-1}\nu_{i,j}^{m'}(x,y^0,...,y^{m-1},z)(y^j-\rho_j(z))\text{ on}\ M_0\times\cdots\times\R^{(k_{m-1}+1)^2}\times B $$
%\end{equation}
because $x-\rho_{-1}(z),\ y^j-\rho_j(z),\,j=0,...,m-1$, generate the ideal of $C^\omega(M_0\times\cdots\times\R^{
(k_{m-1}+1)^2}\times B)$ defined by the graph of $\rho$---$\{(\rho(z),z):z\in B\}$. 
Conversely, the existence of such $\rho,\,\delta_{i,j},\,\theta_{i,j}$ and $\nu_{i,j}^{m'}$ implies the normal crossing 
property of $F$. 
Note that
$$
\{z_{0i}\}=F(\Sing F|_{(\pi_1\circ\cdots\circ\pi_m)^{-1}(U)})=f(\Sing f|_U)
$$
for an open neighborhood $U$ of $X$ in $M$. 
(Assume that $U=M$ for simplicity of notation.) 
Hence a condition for $\tilde F$ to have only normal crossing singularities is that there exist $C^\omega$ 
approximations $\tilde\rho=(\tilde\rho_{-1},...,\tilde\rho_{m-1}):B\to M_0\times\R^{(k_0+1)^2}\times\cdots\times\R^
{(k_{m-1}+1)^2}$ of $\rho,\ \tilde\delta_{i,j}$ of $\delta_{i,j}$ and $\tilde\nu_{i,j}^{m'}$ of $\nu_{i,j}^{m'}$ in 
the strong Whitney $C^\infty$ topology such that 
\begin{equation}\tag{$\widetilde 7$}
f\circ \tilde \rho_{-1}|_{B_i}=\prod_j\tilde \delta_{i,j}^{\theta_{i,j}}+z_{0i},
\end{equation}
\begin{equation}\tag{$\widetilde 8$}
\tilde {\alpha_i}^{m'}(x,y^0,...,y^{m'-1})=\tilde{\nu}_{i,-1}^{m'}(x,y^0,...,y^{m-1},z)(x-\tilde\rho_{-1}(z))+\end{equation}
$$\sum_{j=0}^{m-1}\tilde\nu_{i,j}^{m'}(x,y^0,...,y^{m-1},z)(y^j-\tilde\rho_j(z))\quad\text{on}\ M_0\times
\cdots\times\R^{(k_{m-1}+1)^2}\times B.$$ 
%\end{equation}

However, we cannot find the approximations directly by proposition \ref{prop-app} below. Indeed, we need additional arguments as follows. 
Extend trivially $\rho$ to $\rho=(\rho_{-1},...,\rho_{m-1}):M_0\times\cdots\times\R^{(k_{m-1}+1)^2}\times B\to M_0
\times\cdots\times\R^{(k_{m-1}+1)^2}$ and $\delta_{i,j}$ to $\delta_{i,j}:M_0\times\cdots\times\R^{(k_{m-1}+1)^2}
\times B_i\to\R$. 
Then (7) holds on $M_0\times\cdots\times\R^{(k_{m-1}+1)^2}\times B_i$. 
Approximate these extended $\rho$ and $\delta_{i,j}$ by a $C^\omega$ map $\tilde\rho=(\tilde\rho_{-1},...,\tilde\rho
_{m-1}):M_0\times\cdots\times\R^{(k_{m-1}+1)^2}\times B\to M_0\times\cdots\times\R^{(k_{m-1}+1)^2}$ and $C^\omega$ 
functions $\tilde\delta_{i,j}$ on $M_0\times\cdots\times\R^{(k_{m-1}+1)^2}\times B_i$, respectively, so that 
$(\tilde7)$ and $(\tilde8)$ hold on $M_0\times\cdots\times\R^{(k_{m-1}+1)^2}\times B$ and $M_0\times\cdots\times
\R^{(k_{m-1}+1)^2}\times B_i$, respectively. 
Regard $M_0$ locally as a Euclidean space, and consider the map $\tilde P:M_0\times\cdots\times\R^{(k_{m-1}+1)^2}
\times B\ni(x,y^0,...,y^{m-1},z)\to(x-\tilde\rho_{-1}(x,...,z),...,y^{m-1}-\tilde\rho_{m-1}(x,...,z))\in M_0\times
\cdots\times\R^{(k_{m-1}+1)^2}$. 
As $\tilde P$ is close to the map $:M_0\times\cdots\times\R^{(k_{m-1}+1)^2}\times B\ni(x,y^0,...,y^{m-1},z)\to(x-
\rho_{-1}(x,...,z),...,y^{m-1}-\rho_{m-1}(x,...,z))\in M_0\times\cdots\times\R^{(k_{m-1}+1)^2}$, the Jacobian 
matrix $\vmatrix\frac{D(\tilde P)}{D(x,...,y^{m-1})}\endvmatrix$ vanishes nowhere. 
Hence by the implicit function theorem we have an analytic map $\hat\rho=(\hat\rho_{-1},...,\hat\rho_{m-1}):B\to 
M_0\times\cdots\times\R^{(k_{m-1}+1)^2}$ such that $\tilde\rho(\hat\rho(z),z))=\hat\rho(z)$ and $\hat\rho$ is close 
to $\rho$ in the strong Whitney topology. 
Then $\hat\rho$ is a $C^\omega$ immersion, 
\begin{equation}\tag{$\hat 7$}
f\circ\hat\rho_{-1}(z)=f\circ\tilde\rho_{-1}(\hat\rho(z),z)=\prod_j\tilde\delta_{i,j}^{\theta_{i,j}}(\hat\rho(z),z)+
z_{0i}\quad\text{for}\ z\in B_i,
\end{equation}
\begin{equation}\tag{$\hat 8$}\tilde\alpha_i^{m'}\circ\hat\rho(z)=\tilde\nu_{i,-1}^{m'}(\hat\rho(z),z)(\hat\rho_{-1}(z)-\tilde\rho_{-1}(\hat\rho_{-1}(z),z))+\end{equation}
$$\sum_{j=0}^{m-1}\tilde\nu_{i,j}^{m'}(\hat\rho(z),z)(\hat\rho_j(z)-\tilde\rho_j(\hat\rho(z),z))=0\quad
\text{for}\ z\in B.$$
%\end{equation}

By $(\hat8)$, $\Ima\hat\rho\subset\tilde M_m$, hence $\rho(\Int B)\supset X$, and by $(\hat7)$, $\tilde F$ has only 
normal crossing singularities because $\tilde\delta_{i,j}(\hat\rho(z),z)$ is close to $\delta_{i,j}(z)$ in the 
strong Whitney $C^\infty$ topology. 
Note that if $\tilde\rho$ is of class Nash, so is $\hat\rho$.

Under the conditions $(\tilde7)$ and $(\tilde8)$, $F$ and $\tilde F$ are $C^\omega$ right equivalent through a 
$C^\omega$ diffeomorphism close to id in the strong Whitney $C^\infty$ topology for the following reason. 
Since $F$ and $\tilde F$ have only normal crossing singularities, and since $f\circ\rho_{-1}$ and $f\circ\hat\rho_
{-1}$ are $C^\omega$ right equivalent by (7) and $(\hat7)$, we can modify $\psi_m$ to carry $\Sing F$ to $\Sing
\tilde F$ (cf. step 1 of the proof of theorem 3.1 in \cite{FS}).
Replacing $\tilde F$ with $\tilde F\circ\psi_m$, we assume that $\tilde M_m=M_m$, $\tilde\pi_m^{-1}(\tilde C_{m-1})
=\pi_m^{-1}(C_{m-1}),...,(\tilde\pi_1\circ\cdots\circ\tilde\pi_m)^{-1}(\tilde C_0)=(\pi_1\circ\cdots\circ\pi_m)^{
-1}(C_0)$ and $\Sing F=\Sing\tilde F$. 
Let $\kappa$ be a Nash function on $\R$ with zero set $\{z_{0i}\}$ and regular there. 
Then $\kappa\circ F$ and $\kappa\circ\tilde F$ satisfy the assumption
of lemma 4.7 in \cite{FS}:
\begin{itemize}
\item they have the same sign at each 
point of $M$, only normal crossing singularities at $(\kappa\circ F)^{-1}(0)=F^{-1}(F(\Sing F))$ and the same 
multiplicity at each point of $(\kappa\circ F)^{-1}(0)$,  
\item the $C^\omega$ function on $M_m$, defined 
to be $\kappa\circ\tilde F/\kappa\circ F$ on $M_m-(\kappa\circ F)^{-1}(0)$, is close to 1 in the strong Whitney $C^
\infty$ topology. 
Indeed, the map :$C^\omega(M_m)\ni\phi\to\phi\cdot(\kappa\circ F)\in\kappa\circ FC^\omega(M_m)$ is open in the strong 
Whitney $C^\infty$ topology, $\kappa\circ\tilde F$ is contained 
in $\kappa\circ FC^\omega(M_m)$ and close to $\kappa\circ F$ and hence there exist $\phi\in C^\omega(M_m)$ close to 1 
such that $\phi\cdot(\kappa\circ F)=\kappa\circ\tilde F$. 
\end{itemize}
Therefore there exists a $C^\omega$ diffeomorphism $\psi'_m$ of $M_m$ close to id 
in the strong Whitney $C^\infty$ topology such that $\psi'_m((\kappa\circ F)^{-1}(0))=(\kappa\circ F)^{-1}(0)$ and 
$F-\tilde F\circ\psi'_m$ is $r$-flat at $(\kappa\circ F)^{-1}(0)$ for a large integer $r$. 
Then by proposition 4.8,(i) in \cite{FS}, $F$ and $\tilde F$ are $C^\omega$ right equivalent through a $C^\omega$ diffeomorphism 
close to id in the strong Whitney $C^\infty$ topology.

Consider the case of germ on $X$. 
Enlarging $X$ if necessary we assume that $X$ is semialgebraic. 
Set $X_0=X$. 
Let $h_j^{m'},\,g_j^{m'},\,a_{i,j}^{m'},,..,\nu_{i,j}^{m'}$ be the same as above. 
Let $\tilde h_j^0,\,\tilde g_j^0,\,\tilde a_{i,j}^0$ be defined not on $M_0$ but on an open neighborhood $U_0$ 
of $X_0$ in $M_0$ close to $h_j^0,\,g_j^0,\,a_{i,j}^0$, respectively, at $X_0$ in the $C^\infty$ topology so 
that $(\tilde1)$ and $(\tilde2)$ hold on $U_0$. 
Shrink $U_0$ if necessary. 
Then by remark \ref{rm-lem}.(\ref{r2}) of lemma \ref{pert} we have the blowing-up $U_0\times\R^{(k_0+1)^2}\supset U_1
\xrightarrow{\tau_1} U_0$ along center $D_0=Z(\tilde h_j^0)$ defined by $\tilde h_0^0,...,\tilde h
_{k_0}^0$ and analytic embeddings $\psi_0$ of $U_0$ into $M_0$ and $\tilde\psi_0$ of $U_1$ into $M_1$ close to id 
at $X_0$ and at $\tau_1(X)$, respectively, such that $\psi_0(D_0)\subset C_0$ and $\psi_0\circ\tau_1=\pi_1\circ
\tilde\psi_0$.

Next let $\tilde h_j^1,\,\tilde g_j^1,\,\tilde a_{i,j}^1,\,\tilde b_{j_0,j_1}^1,\,\tilde\beta_{i_1,i_2,i_3}^1$ 
be defined on an open neighborhood of $X_0\times X_1$ in $M_0\times\R^{(k_0+1)^2}$ close to $h_j^1,...,\beta_{i_
1,i_2,i_3}^1$, respectively, at $X_0\times X_1$ in the $C^\infty$ topology such that $(\tilde1)$, $(\tilde2)$ 
and $(\tilde 3)$ hold on the neighborhood, where $\tilde\alpha_i^1$ are defined as in the global case and $X_1$ 
denotes a large ball in $\R^{(k_0+1)^2}$ with center 0 such that $\pi_1^{-1}(X_0)$ and $\tau_1^{-1}(X_0)$ are 
contained in $X_0\times\Int X_1$. 
Shrink $U_0$ and the neighborhood of $X_0\times X_1$ so that $U_1$ and $M_1\cap U_0\times\R^{(k_0+1)^2}$ are 
closed subsets of the neighborhood; it is possible because $\pi_1$ and $\tau_1$ are proper. 
Then there exist the blowing-up $U_0\times\R^{(k_0+1)^2}\times\R^{(k_1+1)^2}\supset U_2
\xrightarrow{\tau_2} U_1$ along center $D_1=Z(\tilde h_j^1)\cap U_1$ defined by $\tilde h_0^1,...
,\tilde h_{k_1}^1$ and analytic embeddings $\psi_1$ of $U_1$ into $M_1$ and $\tilde\psi_1$ of $U_2$ into $M_2$ 
close to id at $\tau_1^{-1}(X_0)$ and at $(\tau_1\circ\tau_2)^{-1}(X_0)$, respectively, such that $\psi_1(D_1)
\subset C_1$ and $\psi_1\circ\tau_2=\pi_2\circ\tilde\psi_1$.

Let $1\!<\!m'\!<\!m,\,m''\!<\!m'$ and $m'''\le m'$. 
By induction, let $\tilde h_j^{m'},\,\tilde g_j^{m'},\,\tilde a_{i,j}^{m'},\,\tilde\chi_j^{m'',m'}$,
$\tilde c_{j_0,j_1,j}^{m'',m'}$, $\tilde d_{j,j_1}^{m'',m'},\,\tilde\beta_{i_1,i_2,i_3}^{m'',m'},\,\tilde\beta
_{i_1,i_2,i_3,i_4}^{m''',m'',m'},\tilde\beta_{i_1,i_2,i_3}^{m''',m'',m'}$ be defined on an open neighborhood of 
$X_0\times\cdots\times X_{m'}$ in $M_0\times\R^{(k_0+1)^2}\times\cdots\times\R^{(k_{m'-1}+1)^2}$ close to $h_j^{m'
},\,g_j^{m'},...,$ respectively, at $X_0\times\cdots\times X_{m'}$ in the $C^\infty$ topology such that $(\tilde1),
\,(\tilde2),\,(\tilde4),\,(\tilde5),\,(\tilde6)$ hold on the neighborhood, where $\tilde\alpha_i^{m'}$ are given 
as in the global case and $X_i$ denotes a large ball in $\R^{(k_{i-1}+1)^2}$ with center 0 for $i=2,...,m'$. 
For $m'\,(\le m)\in\N$, let $\tilde\alpha_i^m$ and $\tilde \chi_j^{m',m}$ be 
defined on an open neighborhood of $X_0\times\cdots\times X_m$ close to $\alpha_i^m$ and $\chi_j^{m',m}$, 
respectively, at $X_0\times\cdots\times X_m$ as before, and $\tilde\rho_i,\,\tilde\delta_{i,j},\,\tilde\nu_{i,j}^
{m'}$ on an open neighborhood of $X_0\times\cdots\times X_m\times B$ close to $\rho_i,\,\delta_{i,j},\,\nu_{i,j}
^{m'}$, respectively, at $X_0\times\cdots\times X_m\times B$ so that $(\tilde7)$ and $(\tilde8)$ hold on the 
neighborhood.

Then we obtain a sequence of blowings-up $U_m\xrightarrow{\tau_m} U_{m-1}\longrightarrow\cdots
\xrightarrow{\tau_1} U_0$ along smooth analytic centers $D_{m-1}=Z(\tilde h_j
^{m-1})\cap U_{m-1}$ in $U_{m-1},...,D_0=Z(\tilde h_j^0)$ in $U_0$, respectively, and an 
analytic embedding $\psi:U_m\to M_m$ such that $\psi(\tau_m^{-1}(D_{m-1}))\subset\pi_m^{-1}(C_{m-1}),...,\psi((
\tau_1\circ\cdots\circ\tau_m)^{-1}(D_0))\subset(\pi_1\circ\cdots\circ\pi_m)^{-1}(C_0)$, $f\circ\pi_1\circ\cdots\circ
\pi_m\circ\psi=f\circ\tau_1\circ\cdots\circ\tau_m$, $U_1,...,U_m$ are realized in $U_0\times\PP(k_0)\subset U_0
\times\R^{(k_0+1)^2},...,U_0\times\PP(k_0)\times\cdots\times\PP(k_{m-1})\subset U_0\times\R^{(k_0+1)^2}\times\cdots
\times\R^{(k_{m-1}+1)^2}$, respectively, and each pair $D_i\subset U_i$ and $\psi$ are close to $C_i\subset M_i$ 
at $(\tau_1\circ\cdots\circ\tau_{i-1})^{-1}(X_0)$ and to id at $(\tau_1\circ\cdots\circ\tau_m)^{-1}(X_0)$, 
respectively, in the $C^\infty$ topology. 

Thus it remains only to find the approximations $\tilde h_j^0,\,\tilde g_j^0,...$ of class Nash. This is a consequence of proposition \ref{prop-app} below. 
\end{demo}

\section{Nested Nash approximation}

\subsection{Nash approximation of an analytic diffeomorphism}

In order to prove theorem \ref{thmN} below and theorem \ref{main}, we
need to make a Nash approximation of analytic solutions of
a system of Nash equations. The following proposition is a nested version of the Nash Approximation Theorem established in \cite{CRS}.

\begin{prop}\label{prop-app}
Let $M_1,...,M_m$ be Nash manifolds, $X_1\subset M_1,...,X_m\subset M_m$ compact semialgebraic subsets, and let 
$l_1,...,l_m,n_1,...,n_m\in\N$. 
Let $F_i\in \mathcal N(X_1\times\cdots\times X_i\times\R^{l_1}\times\cdots\times\R^{l_i})^{n_i}$ and $f_i\in\mathcal O(X_1
\times\cdots\times X_i)^{l_i}$ for $i=1,...,m$ such that 
$$F_i(x_1,...,x_i,f_1(x_1),...,f_i(x_1,...,x_i))=0$$ as 
elements of $\mathcal O(X_1\times\cdots\times X_i)^{n_i}$.
Then there exist $\tilde f_i\in \mathcal N(X_1\times\cdots\times X_i)^{l_i}$ close to $f_i$ in the $C^\infty$ topology 
for $i=1,...,m$ such that $F_i(x_1,...,x_i,\tilde f_1(x_1),...,\tilde f_i(x_1,...,x_i))=0$ in $\mathcal N(X_1\times
\cdots\times X_i)^{n_i}$. 
\end{prop}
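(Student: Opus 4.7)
The plan is to mirror the proof strategy of Theorem 1.1 in \cite{CRS}, replacing the usual N\'eron Desingularization Theorem by its nested counterpart (Theorem 11.4 of \cite{Sp}) in order to preserve the hierarchical dependence of $f_i$ on $x_1,\ldots,x_i$ alone. The argument combines commutative algebra, to absorb the system into a smooth morphism, with the Nash Approximation Theorem of \cite{CRS}, to produce actual Nash sections in the compact-open $C^\infty$ topology.

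First, I would set up the algebraic framework. Let $A_i=\mathcal N(X_1\times\cdots\times X_i)$ and $B_i=\mathcal O(X_1\times\cdots\times X_i)$, which form nested sequences of rings via pullback along the natural projections. The system of equations determines an ideal $I_\bullet$ in the nested Nash algebra $A_\bullet[\mathbf{y}_1,\ldots,\mathbf{y}_m]$, whose $i$-th slot is generated by the components of $F_i$ and only involves the variables $\mathbf{y}_1,\ldots,\mathbf{y}_i$. The tuple $(f_1,\ldots,f_m)$ then corresponds to a nested $\R$-algebra homomorphism $\Phi_\bullet\colon A_\bullet[\mathbf{y}]/I_\bullet\to B_\bullet$ compatible with the nesting.

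Second, I would apply the nested N\'eron Desingularization Theorem to factor $\Phi_\bullet$ through a smooth nested Nash $A_\bullet$-algebra $C_\bullet$, so that the $i$-th stage $C_i$ is smooth over $A_i[\mathbf{y}_1,\ldots,\mathbf{y}_i]/I_i$ and the nesting $C_1\to C_2\to\cdots\to C_m$ matches the nesting in $A_\bullet$. On such a smooth factorization, the classical Nash Approximation Theorem of \cite{CRS} produces a Nash section $C_\bullet\to A_\bullet$ that is $C^\infty$-close to the analytic one coming from $\Phi_\bullet$. Because the smooth factorization preserves the nesting by construction, the Nash section inherits the constraint that its $i$-th component $\tilde f_i$ depends only on $x_1,\ldots,x_i$, and the identity $F_i(x_1,\ldots,x_i,\tilde f_1,\ldots,\tilde f_i)=0$ in $\mathcal N(X_1\times\cdots\times X_i)^{n_i}$ follows from the factorization.

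The main obstacle is the rigorous application of nested N\'eron desingularization in this analytic/Nash setting with compact semialgebraic supports: the version in \cite{Sp} is phrased in noetherian commutative algebra with $\mathfrak m$-adic topology, whereas I need approximation in the compact-open $C^\infty$ topology on $X_1\times\cdots\times X_m$. The required bridge is the Henselian and excellent nature of $\mathcal N(X_i)$ along compact semialgebraic sets, together with the nested Artin approximation property available in this framework; these together upgrade the formal factorization from \cite{Sp} to a genuine $C^\infty$-close Nash section, yielding the desired $\tilde f_i$.
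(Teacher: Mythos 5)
Your high-level plan — mirror \cite{CRS}, with nested N\'eron Desingularization replacing the ordinary one — does capture the strategic idea the paper is after, but the write-up leaves the two hardest steps unaddressed, and one of them is precisely the point where the paper's argument departs from "apply nested N\'eron as a black box."

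First, the paper does \emph{not} invoke Theorem 11.4 of \cite{Sp} directly on the nested system $A_\bullet\to B_\bullet$. It is "inspired by" the nested theorem, but what it actually does is apply the (non-nested) construction from Theorem 1.1 of \cite{CRS} at each level $i$ separately, producing at level $i$ a finite set $G_i\subset\mathcal N(X_1\times\cdots\times X_i)[z_i]$ whose zero set $Z'_i$ is a Nash manifold germ with a \emph{submersive} projection $\pi'_i$ to $M_1\times\cdots\times M_i$, and then assembles the levels into a nested tower by taking $Z_i$ to be the \emph{fiber product} of the submersions $(\pi'_1,\id),\ldots,\pi'_i$. This is how the nesting is built in by hand, and why the smoothness at each stage is geometric (a submersion of Nash manifold germs) rather than merely algebraic. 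Your "Henselian / excellent / nested Artin approximation" sentence is meant to bridge exactly this gap, but you don't supply the regularity hypothesis for $A_\bullet\to B_\bullet$ nor explain why a nested formal smooth factorization yields a nested \emph{Nash submanifold with submersive projection}, which is what the geometry needs.

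Second, even after the smooth factorization is in place, the actual production of $\tilde f_i$ is nontrivial and is handled in the paper by an induction on $m$: having chosen $\tilde\xi_1,\ldots,\tilde\xi_{m-1}$, only the value $\tilde\xi_m(z_m)$ remains to be determined, since the nesting forces $\tilde\xi_i(z_j)=r_{i-1}\circ\cdots\circ r_j\circ\tilde\xi_j(z_j)$ for $j<i$. For the last slot, one approximates the analytic cross-section $\xi$ of $\pi'_m$ by a Nash map $\eta$ via the Nash Approximation Theorem, but $\eta$ is an embedding, not a section; the paper then corrects it by setting $\tilde\xi_m(z_m)=\eta\circ(\pi'_m\circ\eta)^{-1}$, using that $\pi'_m\circ\eta$ is a Nash diffeomorphism close to the identity. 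Your proposal jumps from "smooth factorization preserved nesting" to "Nash section inherits the constraint" without this mechanism, and in particular without explaining how a $C^\infty$-close Nash map to an analytic cross-section becomes an honest cross-section. This is a concrete missing step, not a routine one: it is the implicit-function-theorem trick the paper uses, and omitting it means your argument doesn't actually output a tuple $(\tilde f_1,\ldots,\tilde f_m)$ satisfying $F_i(\cdot,\tilde f_1,\ldots,\tilde f_i)=0$ exactly.
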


\begin{demo}
The proof is inspired by Nested Smoothing Theorem 11.4, \cite{Sp} by Teissier and its proof. 
The proof for $m=1$ coincides with Theorem 1.1, \cite{CRS} if $M_1$ is compact and if $X_1=M_1$, and we can prove 
the proposition for $m=1$ in the same way even if $M_1$ is non-compact. 

Regard each $F_i$ as $n_i$ elements of $\mathcal N(X_1\times\cdots\times X_i\times\R^{l_1}\times\cdots\times\R^{l_i})$. 
We can assume that $M_i$ and $X_i$ are all connected and that $F_i$ are polynomial functions in the 
variables $(y_1,...,y_i)\in \R^{l_1}\times\cdots\times\R^{l_i}$ with coefficients in $\mathcal N(X_1\times\cdots\times 
X_i)$ for the same reason as in the proof of Theorem 1.1, \cite{CRS}. 
Let $\mathcal N(X_1\times\cdots\times X_i)[y_1,...,y_i]$ denote the ring of polynomials in the variables $(y_1,...,y_i)
\in\R^{l_1}\times\cdots\times\R^{l_i}$ with coefficients in $\mathcal N(X_1\times\cdots\times X_i)$ and $(F_1,...,F_i)$ 
the ideal of $\mathcal N(X_1\times\cdots\times X_i)[y_1,...,y_i]$ generated by $F_1,...,F_i$.

Consider a commutative diagram of homomorphisms between rings\,:

$$\xymatrix{
\mathcal N(X_1)\ar[r]^{\phi_1}\ar[d]^{p_1} &\frac{\mathcal N(X_1)[y_1]}{(F_1)} \ar[r]^{\psi_1}\ar[d]^{q_1} & \mathcal O(X_1)\ar[d]^{r_1}\\
\mathcal N(X_1\times X_2) \ar[r]^{\phi_2} \ar[d]^{p_2} & \frac{\mathcal N(X_1\times X_2)[y_1,y_2]}{(F_1,F_2)} \ar[r]^{\psi_2} \ar[d]^{q_2} & \mathcal O(X_1\times X_2) \ar[d]^{r_2}\\
\vdots \ar[d]^{p_{m-1}} & \vdots \ar[d]^{q_{m-1}} & \vdots \ar[d]^{r_{m-1}}\\
\mathcal N(X_1\times\cdots\times X_m) \ar[r]^{\phi_m} &\frac{\mathcal N(X_1\times\cdots\times X_m)[y_1,...,y_m]}{(F_1,...,F_m)} \ar[r]^{\psi_m} & \mathcal O(X_1\times\cdots\times X_m),
}$$
where for each $i$, $\phi_i,p_i,q_i$ and $r_i$ are naturally defined, $\psi_i=\id$ on $\mathcal N(X_1\times\cdots\times 
X_i)$ and $\psi_i(y_j)$ is defined to be $f_j$ as an element of $\mathcal O(X_1\times\cdots\times X_i)$ for each $j\le 
i$. 
Then it suffices to find homomorphisms $\tilde\psi_1:\mathcal N(X_1)[y_1]/(F_1)\to \mathcal N(X_1),...,\tilde\psi_m:
\mathcal N(X_1\times\cdots\times X_m)[y_1,...,y_m]/(F_1,...,F_m)\to \mathcal N(X_1\times\cdots\times X_m)$ such that 
$\tilde\psi_1\circ\phi_1=\id,...,\tilde\psi_m\circ\phi_m=\id$, $\tilde\psi_1(y_1),...,\tilde\psi_m(y_m)$ are 
close to $f_1,...,f_m$, respectively, and $r_i\circ\tilde\psi_i=\tilde\psi_{i+1}\circ q_i$ for $0<i<m$. 
For that we only need to decide the values $\tilde\psi_1(y_1),...,\tilde\psi_m(y_m)$ because $\tilde\psi_i(y_k)=
\tilde\psi_k(y_k)$ as elements of $\mathcal N(X_1\times\cdots\times X_i)$ for $k<i$ by the equality $r_j\circ\tilde
\psi_j=\tilde\psi_{j+1}\circ q_j$. 
By \cite{E}, \cite{Fr} and \cite{Ri} we know that $\mathcal O(X_1\times\cdots\times X_i)$ and $N(M_1\times\cdots\times M_i)$ are 
Noetherian, and the proofs in \cite{E} and \cite{Ri} work for the Noetherian property of $\mathcal N(X_1\times\cdots\times X_i)$. 
Hence all the rings in the diagram are Noetherian. 
Therefore, we assume that $\psi_i$ are injective, adding some Nash
functions to $F_i$'s if necessary.

We will find $k_i\in\N$, finite subsets $G_i$ of $\mathcal N(X_1\times\cdots\times X_i)[z_1,...,z_i]$, $z_j\in\R^{k_j
}$, and a commutative diagram of homomorphisms between rings\,:

$$\xymatrix{
\frac{\mathcal N(X_1)[y_1]}{(F_1)} \ar[r]^{\rho_1}\ar[d]^{q_1} &\frac{\mathcal N(X_1)[z_1]}{(G_1)} \ar[r]^{\xi_1}\ar[d]^{s_1}& \mathcal O(X_1)\ar[d]^{r_1}\\
\frac{\mathcal N(X_1\times X_2)[y_1,y_2]}{(F_1,F_2)} \ar[r]^{\rho_2} \ar[d]^{q_2} &\frac{\mathcal N(X_1\times X_2)[z_1,z_2]}{(G_1,G_2)} \ar[r]^{\xi_2}\ar[d]^{s_2} & \mathcal O(X_1\times X_2) \ar[d]^{r_2}\\
\vdots \ar[d]^{q_{m-1}} & \vdots \ar[d]^{s_{m-1}} & \vdots \ar[d]^{r_{m-1}}\\
\frac{\mathcal N(X_1\times\cdots\times X_m)[y_1,...,y_m]}{(F_1,...,F_m)} \ar[r]^{\rho_m} & \frac{\mathcal N(X_1\times\cdots\times X_m)[z_1,...,z_m]}{(G_1,...,G_m)} \ar[r]^{\xi_m} & \mathcal O(X_1\times\cdots\times X_m)
}$$
such that for each $i$, $s_i$ is naturally defined, $\xi_i\circ\rho_i=\psi_i$, $\rho_i=\xi_i=\id$ on $\mathcal N(X_1
\times\cdots\times X_i)$, the zero set $Z_i$ of $(G_1,...,G_i)$ is the germ on $X_1\times\cdots\times X_i\times\R^
{k_1}\times\cdots\times\R^{k_i}$ of a Nash submanifold of $M_1\times\cdots\times M_i\times\R^{k_1}\times\cdots
\times\R^{k_i}$ and $(G_1,...,G_i)$ is the ideal of $\mathcal N(X_1\times\cdots\times X_i)[z_1,...,z_i]$ of function 
germs vanishing on $Z_i$. 
Note that the restriction $\pi_i$ to $Z_i$ of the projection $M_1\times\cdots\times M_i\times\R^{k_1}\times
\cdots\times\R^{k_i}\to M_1\times\cdots\times M_i$ is submersive because $\mathcal N(X_1\times\cdots\times X_i)\subset
\mathcal N(X_1\times\cdots\times X_i)[z_1,...,z_i]/(G_1,...,G_i)$ and $\xi_i|_{\mathcal N(X_1\times\cdots\times X_i)}=\id$, 
that $\xi_i(z_1,...,z_i)$ is an analytic cross-section of $\pi_i$ and that when we regard $M_j$ locally as 
Euclidean spaces the rank of the Jacobian matrix $\frac{D(G_1,...,G_i)}{D(x_1,...,x_i,z_1,...,z_i)}$ equals the 
codimension of $Z_i$ in $M_1\times\cdots\times M_i\times\R^{k_1}\times\cdots\times\R^{k_i}$ at each point of $Z_i$. 

As in the proof of Theorem 1.1, \cite{CRS} there exist $k_i\in\N$, finite subsets $G_i$ of $\mathcal N(X_1\times
\cdots\times X_i)[z_i]$, $z_i\in\R^{k_i}$ and homomorphisms of $\mathcal N(X_1\times\cdots\times X_i)$-algebras 
$$
\frac{\mathcal N(X_1\times\cdots\times X_i)[y_1,...,y_i]}{(F_1,...,F_i)} \xrightarrow{\rho'_i} \frac{\mathcal N(X
_1\times\cdots\times X_i)[z_i]}{(G_i)}\xrightarrow{\xi'_i} \mathcal O(X_1\times\cdots\times X_i)
$$
such that for each $i$, $\xi'_i\circ\rho'_i=\psi_i$, the zero set $Z'_i$ of $(G_i)$ is the germ on $X_1\times\cdots
\times X_i\times\R^{k_i}$ of a Nash submanifold of $M_1\times\cdots\times M_i\times\R^{k_i}$ and $(G_i)$ is the 
ideal of $\mathcal N(X_1\times\cdots\times X_i)[z_i]$ of function germs vanishing on $Z'_i$. 
Then as above the restriction $\pi'_i$ to $Z'_i$ of the projection $M_1\times\cdots\times M_i\times\R^{k_i}\to M_1
\times\cdots\times M_i$ is submersive and $\xi'_i(z_i)$ is an analytic cross-section of $\pi'_i$. 
Define $\rho_i$ to be the composition of $\rho'_i$ with the canonical homomorphism $\mathcal N(X_1\times\cdots\times X_
i)[z_i]/(G_i)\to\mathcal N(X_1\times\cdots\times X_i)[z_1,...,z_i]/(G_1,...,G_i)$ and $\xi_i$ by $\xi_i(z_i)=\xi'_i(z_i
)$ and $\xi_i(z_j)=r_{i-1}\circ\cdots\circ r_j\circ\xi'_j(z_j)$. 
Then the conditions on $G_i,\rho_i$ and $\xi_i$ are satisfied. 
Indeed, first the zero set $Z_i$ of $(G_1,...,G_i)$ in $M_1\times\cdots\times M_i\times\R^{k_1}\times\cdots\times\R
^{k_i}$ is the fiber product of the submersions $(\pi'_1,\id):Z'_1\times M_2\times\cdots\times M_i\to M_1\times
\cdots\times M_i$, $(\pi'_2,\id):Z'_2\times M_3\times\cdots\times M_i\to M_1\times\cdots\times M_i,...,\pi'_i:Z'_i
\to M_1\times\cdots\times M_i$ and hence the germ on $X_1\times\cdots\times X_i\times\R^{k_1}\times\cdots\times\R^
{k_i}$ of some Nash submanifold of $M_1\times\cdots\times M_i\times\R^{k_1}\times\cdots\times\R^{k_i}$. 
Next, add some finite subset of $\mathcal N(X_1\times\cdots\times X_i)[z_1,...,z_i]$ to $G_i$ whose elements vanish on 
$Z_i$, if necessary. 
Then $(G_1,...,G_i)$ is the ideal of $\mathcal N(X_1\times\cdots\times X_i)[z_1,...,z_i]$ of function germs vanishing 
on $Z_i$. 
Thus we obtain the required diagram.

For the construction of $\tilde\psi_i$'s it suffices to find homomorphisms of $\mathcal N(X_1\times\cdots\times X_i)
$-algebras $\tilde\xi_i:\mathcal N(X_1\times\cdots\times X_i)[z_1,...,z_i]/(G_1,...,G_i)\to\mathcal N(X_1\times\cdots\times 
X_i)$ so that $\tilde\xi_i(z_i)\in\mathcal N(X_1\times\cdots\times X_i)^{k_i}$ are close to $\xi_i(z_i)\in\mathcal O(X_1
\times\cdots\times X_i)^{k_i}$ in the $C^\infty$ topology because if we define $\tilde\xi_i$ by $\tilde\xi_i(z_j)=
r_{i-1}\circ\cdots r_j\circ\tilde\xi_j(z_j)$ for $j<i$ then $\tilde\psi_i=\tilde\xi_i\circ\rho_i$ fulfill the 
requirements. 
By induction on $m$ we assume that $\tilde\xi_1,...,\tilde\xi_{m-1}$ are given. 
Then as before we only need to decide $\tilde\xi_m(z_m)\in\mathcal N(X_1\times\cdots\times X_m)^{k_m}$ close to $\xi_m
(z_m)\in\mathcal O(X_1\times\cdots\times X_m)^{k_m}$ in the $C^\infty$ topology so that $G_m(x_1,...,x_m,\tilde\xi_m(z
_m))=\{0\}$ as a subset of $\mathcal N(X_1\times\cdots\times X_m)$, i.e. $\tilde\xi_m(z_m)$ is a Nash cross-section 
of $\pi'_m$. 
(Here the elements of $G_m$ may be of the variables $x_1,...,x_m,z_1,...,z_m$. 
However, we can remove some elements from $G_m$ so that they are all in the variables $x_1,...,x_m,z_m$ by the 
above arguments.) 
Let $U\subset U'$ be small open semialgebraic neighborhoods of $X_1\times\cdots\times X_m$ in $M_1\times\cdots
\times M_m$ such that $\overline U$ is compact and contained in $U'$, $Z'_m$ is the germ on $X_1\times\cdots
\times X_m\times\R^{k_m}$ of a closed Nash submanifold $Z'$ of $U'\times\R^{k_m}$, $\pi'_m$ is the germ on $X_1
\times\cdots\times X_m\times\R^{k_m}$ of a surjective submersion $\pi':Z'\to U'$ and $\xi_m(z_m)$ is the germ on 
$X_1\times\cdots\times X_m$ of an analytic cross-section $\xi:\overline U\to Z'$ of $\pi'$. 
Let $\eta$ be a Nash approximation of $\xi|_{\overline U}:\overline U\to Z'$ in the $C^\infty$ topology (Nash 
Approximation Theorem), which is an embedding but not necessarily a cross-section of $\pi'|_{\pi^{\prime-1}(
\overline U)}$. 
Let $\tilde\xi_m(z_m)$ be the germ of $\eta\circ(\pi'_m\circ\eta)^{-1}$ on $X_1\times\cdots\times X_m$. 
Then $\tilde\xi_m(z_m)$ is a Nash cross-section of $\pi'_m$ close to $\xi_m(z_m)$ in the $C^\infty$ topology. 
Thus we complete the proof. 
\end{demo}

As a corollary of proposition \ref{prop-app} we obtain the following Nash approximation theorem, which generalizes that proved in \cite{CRS} in the case where $X=M$ and $M$ is compact. 
\begin{thm}\label{thmN}
Let $M$ be a Nash manifold, $X\subset M$ be a compact semialgebraic subset, and $f,g$ be Nash function germs on 
$X$ in $M$. 
If $f$ and $g$ are analytically right equivalent, then $f$ and $g$ are Nash right equivalent. 
The diffeomorphism of Nash right equivalence can be chosen to be close to the given one of analytic right 
equivalence in the $C^\infty$ topology. 
\end{thm}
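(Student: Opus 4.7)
The plan is to deduce theorem \ref{thmN} from proposition \ref{prop-app} in the simplest case $m=1$. First I would embed $M$ as a closed Nash submanifold of some $\R^N$ and fix a Nash tubular neighborhood $W\subset\R^N$ of $M$ with Nash retraction $\pi\colon W\to M$. Let $q_1,\ldots,q_s$ be Nash functions on $\R^N$ whose common zero set is $M$, and let $\bar g = g\circ\pi$ be the Nash extension of $g$ to a neighborhood of $X$ in $W$. Setting
$$F(x,y)=\bigl(q_1(y),\ldots,q_s(y),\,f(x)-\bar g(y)\bigr)\in\mathcal N(X\times\R^N)^{s+1},$$
the hypothesis of analytic right equivalence gives an analytic diffeomorphism germ $h\in\mathcal O(X)^N$ on $X$ (whose image lies in $M$) satisfying $F(x,h(x))=0$ as germs on $X$.

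Next I would apply proposition \ref{prop-app} with $m=1$, $X_1=X$, $l_1=N$, $F_1=F$ and $f_1=h$. This yields a Nash germ $\tilde h\in\mathcal N(X)^N$ arbitrarily close to $h$ in the $C^\infty$ topology such that $F(x,\tilde h(x))=0$ as germs on $X$. The equations $q_i(\tilde h(x))=0$ force $\tilde h$ to take values in $M$ on some semialgebraic neighborhood of $X$, and the last component equation then gives $f = g\circ\tilde h$ as germs on $X$.

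It remains to verify that $\tilde h$ is a Nash diffeomorphism germ. Since $\tilde h$ is $C^1$-close to the analytic diffeomorphism $h$ on a compact semialgebraic neighborhood of $X$, the Jacobian of $\tilde h$ is invertible at every point of some semialgebraic neighborhood $U$ of $X$, so $\tilde h|_U$ is a local Nash diffeomorphism. Compactness of $X$ and injectivity of $h$ on a neighborhood of $X$, combined with $C^0$-closeness of $\tilde h$ to $h$, yield after shrinking $U$ injectivity of $\tilde h|_U$; thus $\tilde h|_U$ is a Nash diffeomorphism onto the open semialgebraic subset $\tilde h(U)\subset M$, which is the desired Nash right equivalence, close to the given analytic one in the $C^\infty$ topology.

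The main obstacle is the initial encoding step: one needs to extend the germ $g$ to an \emph{honest} Nash function on a semialgebraic neighborhood of $M$ in $\R^N$ (which is handled by the Nash tubular neighborhood and retraction) and to choose Nash generators $q_i$ of the ideal of $M$, in a manner that packages the problem as a system of Nash equations to which proposition \ref{prop-app} applies. Once this formulation is in place, the nested Nash approximation does the essential work and the rest is a routine inverse-function-theorem plus compactness argument.
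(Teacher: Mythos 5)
Your reduction to proposition \ref{prop-app} with $m=1$ is the right idea, and the encoding $F(x,y)=(q_1(y),\ldots,q_s(y),\,f(x)-\bar g(y))$ does capture both that $\tilde h$ lands in $M$ and that $f=g\circ\tilde h$ on a neighborhood of $X$. The inverse-function-theorem argument at the end is also fine. However, there is a genuine gap that your system of equations does not address: for Nash right equivalence of germs \emph{on $X$} the paper requires the diffeomorphism germ to be \emph{$X$-preserving} (the paper states this explicitly right after the theorem), and nothing in $F(x,\tilde h(x))=0$ forces $\tilde h(X)=X$. The analytic germ $h$ satisfies $h(X)=X$, but $C^\infty$-closeness of $\tilde h$ to $h$ only gives that $\tilde h(X)$ is a compact set \emph{near} $X$, not that it equals $X$. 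For instance, if $f=g$ and $X$ is a nontrivial compact semialgebraic subset, $\tilde h$ produced by your system could move $X$ slightly, and then $f=g\circ\tilde h$ is not a right equivalence of the germs on $X$ at all.

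The paper's proof exists precisely to handle this. It introduces the \emph{canonical Nash germ decomposition} $\{M_i\}$ of $X$, together with Nash generators $\phi_{i,j}$ of the ideals of the $M_i$, and adds to the functional equation $\hat f\circ\pi=\hat g+\sum_j\alpha_j\phi_{0,j}$ a whole family of further Nash equations $\phi_{i,j}\circ\pi=\sum_{j'}\beta_{i,j,j'}\phi_{i,j'}$ encoding that $\pi$ preserves each $M_i$. The starting analytic diffeomorphism satisfies these by remark \ref{rmk3}, because an $X$-preserving diffeomorphism automatically preserves the canonical decomposition. Applying proposition \ref{prop-app} to the enlarged system then yields a Nash $\tilde\pi$ that also preserves the $M_i$'s, and since $X$ is recovered as a union of connected components of the strata $M_i-M_{i+1}$, closeness to $\pi$ forces $\tilde\pi(X)=X$. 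To repair your argument you would need to adjoin equations of this type (or some equivalent device) to your $F$; the tubular-neighborhood packaging alone does not see $X$ inside $M$.
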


Here we naturally define analytic or Nash right equivalence of two analytic or Nash function germs, respectively, 
on $X$ in $M$. 
We note only that the diffeomorphism germ of equivalence is $X$-preserving.

For the proof we introduce some notions. 
Let $X$ be a semialgebraic subset of a Nash manifold $M$. 
We consider the germs of sets on $X$ in $M$. 
For a germ $A$ on $X$ of a subset of $M$, let $\overline{A}^X$ or $A^{-X}$ denote 
the {\it Nash closure} of $A$ in $M$, i.e. the smallest Nash set germ in $M$ containing $A$. 
In the case where $A$ is a subset of $M$ also, $\overline{A}^X$
coincides with the Nash closure of the 
germ of $A$ on $X$ in $M$. 
We define by induction a sequence of Nash set germs $M_i$ in $M$ as follows. 
Let $M_1$ be the germ $\overline X^X$ and assume that $M_1,...,M_{k-1}$ are given for $k\,(>1)\in\N$. 
Then, set 
$$
M_k=[\overline{(M_{k-1}-X)}\cap\overline{(M_{k-1}\cap X)}\,\overline]^X. 
$$
We call $\{M_i\}$ the {\it canonical Nash germ decomposition} of $X$. 
Then $\{M_i\}$ is a decreasing sequence of Nash set germs, for each $i$ the set $X\cap M_i-M_{i+1}$ is a union of some 
connected components of $M_i-M_{i+1}$ and $\{M_i\}$ is canonical in the following sense. 
Let $\{M'_i\}$ be another decreasing sequence of Nash set germs such that for each $i$ the set 
$X\cap M'_i-M'_{i+1}$ is a union of some connected components of $M'_i-M'_{i+1}$, which is called a {\it Nash 
germ decomposition of $X$}. 
Assume that $\{M'_i\}$ is distinct from $\{M_i\}$. 
Then $M'_1=M_1,...,M'_{k-1}=M_{k-1}$ and $M'_k\supsetneq M_k$ for some $k$. 

A subset $Y$ of an analytic manifold $N$ is called {\it global semianalytic} if $Y$ is described by finitely 
many equalities and inequalities of global analytic functions on $N$. 
Let $Y$ be a relatively compact and global semianalytic subset of $N$. 
Then we can define the {\it global analytic closure} $\overline Z^Y$ of the germ on $Y$ of a subset $Z$ of $N$ 
(or of the germ $Z$ on $Y$ of a subset of $N$) and a (the {\it canonical) global analytic germ decomposition} 
of $Y$ in the same way. Indeed, for a global semianalytic set $Z$ in $N$, $\dim Z=\dim\overline Z^Y$ for 
the reason explained below, and if $Z$ is, moreover, relatively compact then $\overline Z$ is global 
semianalytic by \cite{Ru} and finally, a global analytic set is global semianalytic (and moreover is the zero set 
of one global analytic function). 
To prove that $\dim Z=\dim\overline Z^Y$ we can assume that $Z$ is a global semianalytic set of the form $\{x\in 
N:f(x)=0,f_1>0,...,f_k(x)>0\}$ for some analytic functions $f,f_1,...,f_k$ on $N$ dividing $Z$ if necessary, and 
it suffices to prove that the global analytic closure $\overline Z^N$ of $Z$ is of the same dimension as $Z$. 
Let $x_0\in Z$ where the germ of $\overline Z^N$ is of dimension
$\dim\overline Z^N$. There exists such a point since $Z\cap
\Reg\overline Z^N\not=\emptyset$. 
Then $f_1>0,...,f_k>0$ on a neighborhood of $x_0$ in $\overline Z^N$. 
Hence $Z$ contains the neighborhood and is of dimension $\dim\overline Z^N$. 
(We do not know whether the canonical global analytic germ decomposition of $Y$ exists if $Y$ is a 
non-relatively compact global semianalytic set.) 
\begin{rmk}\label{rmk3}\begin{flushleft}\end{flushleft}
\begin{enumerate}
\item \label{i} Let $N\supset Y$ and $N'\supset Y'$ be analytic manifolds and respective relatively compact and global 
semianalytic subsets and $\phi:N\to N'$ an analytic diffeomorphism such that $\phi(Y)=Y'$. 
Then $\phi$ carries the canonical global analytic germ decomposition of $Y$ to the canonical global analytic germ 
decomposition of $Y'$. 
\item \label{ii} Let $M\supset X$ be a Nash manifold and a semialgebraic subset. 
Then the canonical global analytic germ decomposition of $X$ is well-defined and coincides with the canonical Nash 
germ decomposition of $X$ because the global analytic closure of a semialgebraic set equals its Nash closure. 
\end{enumerate}
\end{rmk}

{\it Proof of theorem \ref{thmN}.}
Let $M\subset\R^n$, set $M_0=\overline M^X$, and let $\{M_i:i=1,2,...\}$ be the canonical Nash germ decomposition 
of $X$. 
Let $\mathcal O(X)$ and $\mathcal N(X)$ denote respectively the germs of analytic and Nash functions on $X$ in $\R^n$ 
but not in $M$. 
Let $\{\phi_{i,j}:j\}$ for each $i=0,1,...$ be finitely many generators of the ideal of $\mathcal N(X)$ defined by 
$M_i$. 
Extend $f$ and $g$ to elements $\hat f$ and $\hat g$ of $\mathcal N(X)$, respectively. 
Then we have $\pi=(\pi_1,...,\pi_n)\in\mathcal O(X)^n$ such that $\pi|_M$ is the germ on $X$ of a $C^\omega$ 
diffeomorphism between neighborhoods of $X$ in $M$ and $f\circ\pi=g$ on $M$. 
Hence there exist $\alpha_j\in\mathcal O(X)$ such that 
\begin{equation}\tag{1}\label{*}
\hat f\circ\pi=\hat g+\sum_j\alpha_j\phi_{0,j}.
\end{equation}

By remarks \ref{rmk3}.(\ref{i}) and \ref{rmk3}.(\ref{ii}), $\pi$ is $M_i$-preserving. 
Hence there exist $\beta_{i,j,j'}\in\mathcal O(X)$ such that for each $\phi_{i,j}$ 

\begin{equation}\tag{2}\label{**}
\phi_{i,j}\circ\pi=\sum_{j'}\beta_{i,j,j'}\phi_{i,j'}.
\end{equation}

Apply proposition \ref{prop-app} to $(\ref{*})$ and $(\ref{**})$. 
Then there exist $\tilde\pi\in\mathcal N(X)^n$, $\tilde\alpha_j\in\mathcal N(X)$ and $\tilde\beta_{i,j,j'}\in\mathcal N(X)$ 
close to $\pi,\alpha_j$ and $\beta_{i,j,j'}$, respectively, in the $C^\infty$ topology such that

\begin{equation}\tag{$\widetilde 1$}\label{11}
\hat f\circ\tilde\pi=\hat g+\sum_j\tilde \alpha_j\phi_{0,j},
\end{equation}

\begin{equation}\tag{$\widetilde 2$}\label{22}
\phi_{i,j}\circ\tilde\pi=\sum_{j'}\tilde\beta_{i,j,j'}\phi_{i,j'}. 
\end{equation}

Since $\tilde\pi$ is an approximation of $\pi$, (\ref{22}) implies that $\tilde \pi|_M$ is the germ on $X$ 
of a Nash diffeomorphism between open semialgebraic neighborhoods of $X$ in $M$. 
Hence by (\ref{11}), $f\circ(\tilde\pi|_M)=g$, and the theorem is proved. 
\begin{flushright}
$\Box$
\end{flushright}

Consider the plural case of $\{X\}$. 
Let $X$ and $X_j,\ j=1,...,k$, be semialgebraic subsets of a Nash manifold $M$. 
We define the {\it canonical Nash germ decomposition} $\{M_i\}$ of $\{X;X_j\}$ as follows. 
Set $X_0=\cup_{j=1}^k X_j$ and $M_1=\overline X_0^X$. Assume that we
have defined $M_j$ for $j \leq i$. 
Then we set 
$$
M_{i+1}=(\cup_{j=1}^k[\overline{(M_i-X_j)}\cap\overline{(M_i\cap X_j)}]\,\overline)^X. 
$$
The same properties as in the single case hold. 
To be precise, $\{M_i\}$ is a decreasing sequence of Nash set germs on $X$, for each $i$ and $j>0$ $X_j\cap M_i-M
_{i+1}$ is a union of some connected components of $M_i-M_{i+1}$, and $\{M_i\}$ is canonical in the same sense as 
in the single case. 
We define also a {\it Nash germ decomposition} of $\{X;X_j\}$ and a (the {\it canonical) global analytic germ 
decomposition} of a finite family of relatively compact global semianalytic sets in an analytic manifold in the 
same way. 
Then remark \ref{rmk3}.(\ref{i}) and \ref{rmk3}.(\ref{ii}) hold also in the plural case.

Using these notions and remarks in the same way as above we can refine theorem \ref{thmN} as follows. 
\begin{rmk}\label{iii} In theorem \ref{thmN}, let $A_i$ and $B_i$ be a finite number of semialgebraic subsets of $M$ such that the 
diffeomorphism germ of analytical right equivalence carries the germ on $X$ of each $A_i$ to the one of $B_i$. 
Then the diffeomorphism germ of Nash right equivalence is chosen to have the same property.

In particular, if we set $f=g=$\,constant then we have the following statement.

Let $M$ and $X$ be the same as in theorem \ref{thmN}, and let $C_i$ and $D_i$ be finitely many semialgebraic subsets of 
$M$. 
Assume that there exists a germ $\pi$ on $X$ of an analytic diffeomorphism between neighborhoods of $X$ in $M$ 
which carries the germ on $X$ of each $C_i$ to the one of $D_i$ and such that $\pi(X)=X$. 
Then $\pi$ is approximated by a germ on $X$ of a Nash diffeomorphism between semialgebraic neighborhoods of $X$ 
in $M$ in the $C^\infty$ topology keeping the properties of $\pi$. 
\end{rmk}

\subsection{Proof of theorem \ref{main}}

\subsubsection{Proof of theorem \ref{main} in the case where $X=M$ and $M$ is compact}
Assume that $f$ and $g$ are almost Blow-analytically equivalent. 
Let $\pi_f:N\to M,\,\pi_g:L\to M$ and $h:N\to L$ be two compositions of finite sequences of blowings-up along smooth 
analytic centers and an analytic diffeomorphism, respectively, such that $f\circ\pi_f=g\circ\pi_g\circ h$. 
We can assume that $f\circ\pi_f$ and $g\circ \pi_g$ have only normal crossing singularities, performing if
necessary additional blowings-up.

Then by theorem \ref{Nresol} there exist compositions of finite sequences of blowings-up along smooth Nash centers $\tilde\pi
_f:\tilde N\to M$ and $\tilde\pi_g:\tilde L\to M$ and analytic diffeomorphisms $\tau_f:\tilde N\to N$ and $\tau_g:
\tilde L\to L$ such that $f\circ\pi_f\circ\tau_f=f\circ\tilde\pi_f$ and $g\circ\pi_g\circ\tau_g=g\circ\tilde\tau_g$. 
Hence $f\circ\tilde\pi_f\circ\tau_f^{-1}=g\circ\tilde\pi_g\circ\tau_g^{-1}\circ h$, and $f\circ\tilde\pi_f$ and $g
\circ\tilde\pi$ are analytically right equivalent. 
Then by theorem \ref{thmN} they are Nash right equivalent, i.e. $f$ and $g$ are almost Blow-Nash equivalent.

Next we prove that if $f$ and $g$ are almost Blow-analytically R-L equivalent then they are almost Blow-Nash R-L 
equivalent. 
For that it suffices to prove that two analytically R-L equivalent Nash functions $\phi$ and $\psi$ with only normal 
crossing singularities are Nash R-L equivalent. 
Let $\pi$ and $\tau$ be analytic diffeomorphisms of $M$ and $\R$, respectively, such that $\tau\circ\phi=\psi\circ\pi$. 
Then $\pi(\Sing\phi)=\Sing \psi$, $\tau(\phi(\Sing\phi))=\psi(\Sing\psi)$ and $\pi(\phi^{-1}(\phi(\Sing\phi)))=\psi^
{-1}(\psi(\Sing\psi))$. 
By remark \ref{iii} we have a Nash diffeomorphism $\pi_0$ of $M$ close to $\pi$ in the $C^\infty$ topology such that $\pi
_0(\Sing\phi)=\Sing\psi$ and $\pi_0(\phi^{-1}(\phi(\Sing\phi)))=\psi^{-1}(\psi(\Sing\psi))$, and since $\phi(\Sing
\phi)$ is a finite set, we have a Nash diffeomorphism $\tau_0$ of $\R$ close to $\tau$ in the compact-open $C^\infty$ 
topology such that $\tau_0=\tau$ on $\phi(\Sing\phi)$. 
Replace $\psi$ with $\tau_0^{-1}\circ\psi\circ\pi_0$. 
Then we can assume from the beginning that $\Sing\phi=\Sing\psi$, $\phi(\Sing\phi)=\psi(\Sing\psi)$, $\phi^{-1}(\phi
(\Sing\phi))=\psi^{-1}(\psi(\Sing\psi))$, and $\pi$ and $\tau$ are close to id in the $C^\infty$ topology and in the 
compact-open $C^\infty$ topology, respectively. 
Hence for each $z_0\in\phi(\Sing\phi)$, $\phi-z_0$ and $\psi-z_0$ have the same sign at each point of $M$ and the 
same multiplicity at each point of $\phi^{-1}(z_0)$. 
Let $\rho$ be a Nash function on $\R$ with zero set $\phi(\Sing\phi)$ and regular there. 
Then $\rho\circ\phi$ and $\rho\circ\psi$ satisfy the conditions in lemma 4.7, \cite{FS}---$(\rho\circ\phi)^{-1}(0)=(\rho\circ
\psi)^{-1}(0)\,(=\phi^{-1}(\phi(\Sing\phi)))$, $\rho\circ\phi$ and $\rho\circ\psi$ have the same sign at each point 
of $M$, only normal crossing singularities at $(\rho\circ\phi)^{-1}(0)$ and the same multiplicity at each point of 
$(\rho\circ\phi)^{-1}(0)$, and the natural extension to $M$ of the function $\rho\circ\psi/\rho\circ\phi$ defined 
on $M-(\rho\circ\phi)^{-1}(0)$ is close to 1 in the $C^\infty$ topology. 
Hence by lemma 4.7 in \cite{FS} there exists a Nash diffeomorphism $\pi_1$ of $M$ close to id in the $C^\infty$ topology such 
that $\pi_1(\phi^{-1}(\phi(\Sing\phi)))=\phi^{-1}(\phi(\Sing\phi))$ and $\phi-\psi\circ\pi_1$ is $l$-flat at $\phi^{
-1}(\phi(\Sing\phi))$ for a large integer $l$. 
Replace, once more, $\psi$ with $\psi\circ\pi_1$. 
Then we can assume, moreover, that $\phi-\psi$ is $l$-flat at $\phi^{-1}(\phi(\Sing\phi))$ and close to 0 in the 
$C^\infty$ topology. 
Hence by proposition 4.8,(i) in \cite{FS}, $\phi$ and $\psi$ are analytically right equivalent and then by theorem \ref{thmN} they are 
Nash right equivalent.

\subsubsection{Proof of theorem \ref{main} in the case $X\subset\Sing f$}
Assume that $f$ and $g$ are Nash functions defined on open semialgebraic neighborhoods $U$ and $V$, respectively, 
of $X$ in $M$, and let $\pi_f:N\to U$, $\pi_g:L\to V$ and $h:N'\to L'$ be two compositions of finite sequences of 
blowings-up along smooth analytic centers and an analytic diffeomorphism from an open neighborhood $N'$ of $\pi^{-1}
_f(X)$ in $N$ to one $L'$ of $\pi^{-1}_g(X)$ in $L$, respectively, such that $f\circ\pi_f=g\circ\pi_g\circ h$ and 
$h(\pi^{-1}_f(X))=\pi^{-1}_g(X)$. 
When we proceed as in the proof in the case of $X=M$ we can replace $\pi_f:N\to U,\,\pi_g:L\to V$ and $h:N'\to L'$ 
by Nash $\tilde\pi_f:\tilde N\to\tilde U,\,\tilde\pi_g:\tilde L\to\tilde V$ and $h:\tilde N'\to\tilde L'$, 
respectively, so that $f\circ\tilde\pi_f=g\circ\tilde\pi_g\circ\tilde h$. 
However, we cannot expect the equality $\tilde h(\tilde\pi_f^{-1}(X))=\tilde\pi_g^{-1}(X)$. 
For the equality we need to modify $\pi_f$ and $\pi_g$.

As in the construction of the canonical Nash germ decomposition we have a decreasing sequence of Nash sets $X_i,\,
i=1,2,...,$ in $U$ such that $X_1$ is the Nash closure of $X$ in $U$ and for each $i$ the set $X_i\cap X-X_{i+1}$ is a union 
of some connected components of $X_i-X_{i+1}$. 
Set $X_{f,i}=\pi^{-1}_f(X_i)$. 
Then $\{X_{f,i}\}$ is a decreasing sequence of global analytic sets in $N$, $\pi^{-1}_f(X)\subset X_{f,1}$, and for 
each $i$ the set $\pi_f^{-1}(X)\cap X_{f,i}-X_{f,i+1}$ is a union of
some connected components of $X_{f,i}-X_{f,i+1}$. 

Now, by Hironaka Desingularization
Theorem (and similarly to the case of $X=M$), we are able to reduce the problem to the case 
where $X_{f,i}$ are normal crossing, $f\circ\pi_f$ has only normal crossing singularities, and hence $\pi_f^{-1}(X)$ 
is a union of some connected components of strata of the canonical stratification of $\Sing(f\circ\pi_f)$. We call these properties $(*)$. 
Shrinking $N',\,V,\,L$ and $L'$ so that $L'=L$ if necessary, then
$\pi_g^{-1}(X)$ and $g\circ\pi_g$ satisfy also $(*)$.

Let $N$ and $L$ be realized in $U\times\PP(k_{f,0})\times\cdots\times\PP(k_{f,m'-1})$ and in $V\times\PP(k_{g,0})
\times\cdots\times\PP(k_{g,m''-1})$, respectively, as in theorem \ref{Nresol}. 
Then by theorem \ref{Nresol} there exist compositions of finite sequences of blowings-up along smooth Nash centers $\tilde
\pi_f:\tilde N\to\tilde U$ and $\tilde\pi_g:\tilde L\to\tilde V$ and analytic embeddings $h_f:\tilde N\to N'$ and 
$h_g:\tilde L\to L'$ such that
\begin{itemize}
\item $\tilde U$ and $\tilde V$ are open semialgebraic neighborhoods of $X$ in $U$ and $V$, 
respectively,
\item  $f\circ\pi_f\circ h_f=f\circ\tilde\pi_f,\,g\circ\pi_g\circ h_g=g\circ\tilde\pi_g$, $\tilde N$ and 
$\tilde L$ are realized in $\tilde U\times\PP(k_{f,0})\times\cdots\times\PP(k_{f,m'-1})$ and in $\tilde V\times\PP(k_
{g,0})\times\cdots\times\PP(k_{g,m''-1})$, respectively, 
\item $\tilde N$ and $\tilde L$ are close to $N$ and $L$ at 
$\tilde\pi_f^{-1}(X)$ and $\tilde\pi_g^{-1}(X)$, respectively, in the $C^\infty$ topology, 
\item $(**)$ $h_f$ and 
$h_g$ are close to id at $\tilde\pi_f^{-1}(X)$ and $\tilde\pi_g^{-1}(X)$, respectively, in the $C^\infty$ topology. 
\end{itemize}
Hence 
$$
f\circ\tilde\pi_f\circ h_f^{-1}=g\circ\tilde\pi_g\circ h^{-1}_g\circ h\quad\text{on}\ \Ima h_f\cap h^{-1}(\Ima h_g). 
$$
Clearly $h_f(\Sing(f\circ\tilde\pi_f))\subset\Sing(f\circ\pi_f)$ and $h_g(\Sing(g\circ\tilde\pi_g))\subset\Sing(g
\circ\pi_g)$. 
It follows from $(*)$ and $(**)$ that $\tilde\pi_f^{-1}(X)$ and $\tilde\pi_g^{-1}(X)$ are unions of some connected 
components of strata of the canonical stratifications of $\Sing(f\circ\tilde\pi_f)$ and $\Sing(g\circ\tilde\pi_g)$, 
respectively, and hence 
$$
h_f(\tilde\pi_f^{-1}(X))=\pi_f^{-1}(X)\quad\text{and}\quad h_g(\tilde\pi_g^{-1}(X))=\pi_g^{-1}(X). 
$$
Therefore, the germs of $f\circ\tilde\pi_f$ on $\tilde\pi_f^{-1}(X)$ and of $g\circ\tilde\pi_g$ on $\tilde\pi_g^{-1}
(X)$ are analytically right equivalent. 
On the other hand, by remark \ref{iii}, the germs of $\tilde N$ on $\tilde\pi_f^{-1}(X)$ and of $\tilde L$ on 
$\tilde\pi_g^{-1}(X)$ are Nash diffeomorphic. 
Hence we can regard them as the same Nash set germ. 
Then by theorem \ref{thmN} and remark \ref{iii}, the germs of $f\circ\tilde\pi_f$ on $\tilde\pi_f^{-1}(X)$ and of $g\circ
\tilde\pi_g$ on $\tilde\pi_g^{-1}(X)$ are Nash right equivalent. 
Thus the germs of $f$ and $g$ on $X$ are almost Blow-Nash equivalent.

Finally, the case of the R-L equivalences runs in the same way as that of $X=M$.

\end{document}